\numberwithin{equation}{section}
\numberwithin{figure}{section}
\definecolor{ddmagenta}{rgb}{0.7,0,0.9}
\definecolor{ddorange}{rgb}{0.8,0.2,0}
\definecolor{ALEXBLAU}{rgb}{0.4,0.0,0.9}
\newcommand{\AAA}{}% \color{blue}}
\newcommand{\EEE}{\color{black}}
\newcommand{\FHN}{\mathrm{FN}}
\newcommand{\bbGamma}{\rmI\hspace{-0.13em}\Gamma}
\newcommand{\ttc}{\mathtt{c}}
\newcommand{\ttv}{\mathtt{v}}
\newcommand{\ttvminus}{\mathtt{v}_{\!\bm-}}
\newcommand{\ttC}{\mathtt{C}}
\renewcommand{\ti}{{\times}}
\renewcommand{\div}{\mathop{\mathrm{div}}\nolimits}
\newcommand{\xrightharpoonup}[2][]{\ext@arrow 0359{%
        \arrowfill@\relbar\relbar\rightharpoonup}{#1}{#2}}
\DeclareMathOperator{\wtc}{\xrightharpoonup{2w}}
\DeclareMathOperator{\stc}{\xrightarrow{2s}}
\begin{document}

\title{Traveling fronts in a reaction-diffusion equation \\
 with a memory term\thanks{Research partially supported by Deutsche
   Forschungsgemeinschaft via subproject A5 within SFB 910 \emph{Control of
  self-organizing nonlinear systems} (Project no.\ 163436311).}} 

\author{Alexander Mielke$^{*,\dagger}$ and Sina Reichelt$^*$\\
 \begin{minipage}{0.8\textwidth} \normalsize 
  $^*$ Weierstraß-Institut für Angewandte Analysis und Stochastik, Berlin\\
  $^\dagger$ Institut für Mathematik, Humboldt-Universität zu Berlin
 \end{minipage}
}

\date{26. April 2021}
\maketitle

\centerline{\large\itshape In memory of Pavel Brunovsk\`y}\medskip

\begin{abstract}
  Based on a recent work on traveling waves in spatially nonlocal
  reaction-diffusion equations, we investigate the existence of traveling
  fronts in reaction-diffusion equations with a memory term. We will explain how
  such memory terms can arise from reduction of reaction-diffusion systems if the
  diffusion constants of the other species can be neglected. In particular, we
  show that two-scale homogenization of spatially periodic systems can induce
  spatially homogeneous systems with temporal memory.

  The existence of fronts is proved using comparison principles as well as
  a reformulation trick involving an auxiliary speed that allows us to
  transform memory terms into spatially nonlocal terms. Deriving explicit
  bounds and monotonicity properties of the wave speed of the arising traveling
  front, we are able to establish the existence of true traveling fronts for
  the original problem with memory. Our results are supplemented by
  numerical simulations.
\end{abstract}

\section{Introduction}
\label{s:Intro}

The study of traveling fronts in scalar reaction-diffusion equations with
a bistable nonlinearity is a classical topic and there is a rich literature
concerning the existence, uniqueness, and asymptotic stability of such fronts,
see e.g.\ \cite{FifMcl77ASND} and \cite{Chen97EUAS, AchKue15TWBE} and the
references therein. The last two references are actually devoted to scalar,
spatially nonlocal, parabolic equations of the form
\begin{equation}
  \label{eq:I.NonlocPDE}
  \dot u = L u + F(u) + G\big(u, \bbJ S(u)\big), \quad t>0 , \ x\in \R,
\end{equation}
where traveling fronts of the form $u(t,x)= \calU(x{-}c t) \in \R$ are
investigated.  In particular, \cite{Chen97EUAS} studies the case $Lu=Du_{xx}$,
assumes that $G(u,\cdot):\R\to \R$ and $S:\R\to \R$ are monotonously increasing
and that $\bbJ$ is a convolution with a smooth, nonnegative kernel $J\geq 0$,
i.e.\ $(\bbJ f)(x)=(J{*}f) (x)=\int_\R J(x{-}y)f(y)\dd y$. The paper \cite{AchKue15TWBE}
considers the case $G\equiv 0$ and $Lu =\rmD^\alpha_\theta u$, which is a
Riesz-Feller operator with $\alpha \in {]1,2]}$ and
$|\theta| \leq \min\{\alpha,2{-}\alpha\}$. In both cases, the nonlinearity $F$
is of bistable type such that \eqref{eq:I.NonlocPDE} has exactly three
homogeneous steady states $u\equiv u_\alpha$ with $\alpha\in \{-,\rmm,+\}$, where
$u_\pm$ are stable while the middle state $u_\rmm$ is unstable.

In \cite{Chen97EUAS, AchKue15TWBE} it is shown that, under suitable technical
assumptions, equation \eqref{eq:I.NonlocPDE} admits traveling fronts
$u(t,x)=\calU(x{-}c t)$ with $\calU(\xi)\to u_\pm$ for $\xi \to \pm \infty$,
and that these solutions are even stable up to translation. The crucial tool for
the analysis in these two papers are comparison principles that are flexible
enough to survive nonlocal operators with positive kernels and monotone
operators. The only non-monotonicity occurs in the local function
$u \mapsto F(u)$.

Here we treat a similar type of equation but allow for memory terms
(nonlocality in time), namely the equation
\begin{equation}
  \label{eq:I.Memory}
   \dot u(t,x) = D u_{xx}(t,x) + F(u(t,x)) + \int_0^\infty 
   \wh\Gamma(\tau) \, u(t{-}\tau,x) \dd \tau , \quad t>0 , \ x\in \R,
\end{equation}
with an integrable, nonnegative memory kernel $\wh \Gamma$. A more general,
nonlinear memory kernel of the type $G\big(u, \wh\Gamma{*} S( u(\tau,x))\big)$
could be treated as well (cf.\ Section \ref{su:Generalizations}), 
but we avoid this complexity to keep the main
arguments simple. Of course, memory terms of this type destroy any kind of
comparison principles, so the ideas in \cite{Chen97EUAS, AchKue15TWBE} cannot
be applied directly. Traveling fronts in parabolic equations with discrete time
delay are treated in \cite{WuZou01TWFR}, but not including the bistable case
considered here.  

However, introducing the auxiliary wave speed $\ttv \in \R$, we look at a
corresponding equation with spatial nonlocality, namely  
\begin{equation}
  \label{eq:I.ttv}
   \dot u(t,x) = D u_{xx}(t,x) + F(u(t,x)) + \int_0^\infty 
   \wh\Gamma(\tau)\, u(t, x{+}\ttv \tau) \dd \tau , \quad t>0 , \ x\in \R.
\end{equation}
This nonlocal equation has now traveling fronts if the function
$\wh F: u \mapsto F(u) + \gamma u$ with
$\gamma=\int_0^\infty\wh\Gamma(\tau)\dd \tau$ is a bistable nonlinearity and the
associated fronts will have a wave speed $c=C(\ttv)$. The main observation is
now that in the special case $\ttv=C(\ttv)$ the traveling wave
$u(t,x)=\calU(x{-}\ttv t)$ of \eqref{eq:I.ttv} also solves the original memory
equation \eqref{eq:I.Memory}, see Proposition \ref{pr:TimeSpace}. 

Hence, existence of traveling fronts for the memory equation \eqref{eq:I.Memory}
will be established by providing conditions that guarantee that the function
$\ttv \mapsto C(\ttv)$ has a fixed point. Indeed, we again use the comparison
principle for the spatially nonlocal equation \eqref{eq:I.ttv} to derive
continuity of $C$ and certain monotonicities. In Theorem \ref{th:MainResult} we can
conclude that the memory equation \eqref{eq:I.Memory} has always a traveling
front if $\wh F: u \mapsto F(u) + \gamma u$ is a bistable nonlinearity.  
In Corollary \ref{co:SpeedBounds} we provide bounds on the associated wave
speed in terms of the wave speed of the local equation with nonlinearity $\wh
F$. 

In Section \ref{se:Derivation} we discuss several derivations of the memory
equation from classical reaction-diffusion systems. In all cases, the memory
appears by a coupling to ODEs or PDEs acting locally in $x \in \R$ but induce
memory through its internal dynamics. The simplest case occurs in the coupled
PDE-ODE system considered extensively in \cite{GurRei18PFNS}:
\begin{equation}
  \label{eq:I.PDE-ODE-System}
  \dot u = Du_{xx} + F(u) + \sum_{i=1}^{m_\rmm} a_i w_i, \qquad \dot w_i = -
  \lambda_i w_i + b_i u \ \text{ for }i=1,\ldots,m_\rmm,
\end{equation}
where $a_i, \ b_i$, and $\lambda_i$ are fixed real parameters with
$\lambda_i>0$. Clearly, the linear ODEs can be solved by $w_i(t)=\int_0^\infty
\ee^{-\lambda_i \tau} b_i u(t{-}\tau)\dd \tau$, and we obtain
\eqref{eq:I.Memory} with 
\[
\wh\Gamma(\tau)= \sum_{i=1}^{m_\rmm} \ee^{-\lambda_i \tau} a_i b_i, \quad
\text{ giving }\gamma =\int_0^\infty\!\wh\Gamma(\tau)\dd\tau= \sum_{i=1}^{m_\rmm}  \frac{a_ib_i}{\lambda_i} . 
\] 
We emphasize that our theory is applicable also in cases where some of the
products $a_ib_i$ are negative, as long as $\wh \Gamma$ is nonnegative. 
We also discuss possible nonlinear couplings that still allow for the
application of \cite{Chen97EUAS}. 

A second motivation for deriving memory equations is the study of traveling
pulses and fronts in situations where the coefficients in the system are
rapidly oscillating, thus modeling \EEE a periodic heterostructure of the
medium on a microscopic scale. In the following, the homogenization parameter
$\eps>0$ denotes the ratio between the characteristic length scales of the
microscopic and macroscopic structure. \AAA A typical system considered in
Section \ref{s:TwoscaleHom} is the heterogeneous FitzHugh-Nagumo system  
\begin{equation}
  \label{eq:I.oscFHN}
\begin{aligned}
\dot u&=\big( D_\rmu(\tfrac1\eps x) u_x\big)_x + F(\tfrac1\eps x, u)
-\alpha\big(\tfrac1\eps x \big) w, 
\\
\dot w & = \big( \eps^2 D_\rmw(\tfrac1\eps x) w_x\big)_x - \lambda
\big(\tfrac1\eps x \big)w + \beta \big(\tfrac1\eps x \big) u, 
\end{aligned}
\end{equation}
where all dependence on $\tfrac1\eps x$ is assumed to be 1-periodic. Of
course, exact traveling waves cannot exist, but one still expects periodically
modulated pulses and fronts traveling through the heterogeneous medium. Indeed,
\EEE there exists a vast literature on the study of traveling fronts for
reaction-diffusion equations in continuous periodic
media, e.g.\ \cite{HudZin95ETWR, BerHam02FPPE}, fronts in discrete periodic media
\cite{GuoHam06FPDP, ChGuWu08TWDP}, and fronts in perforated domains \cite{Hein01WSRD}. We
refer to \cite{Xin00FPHM} for a review \AAA including \EEE references to earlier
works. Most of the latter results share a common approach based on comparison
principles, and so do we. However, our approach also allows for systems of
reaction-diffusion equations.

In \cite{MaScUe07EATW} reaction-diffusion systems are studied and exponential
averaging is used to show that traveling wave solutions can be described by a
spatially homogeneous equation and exponentially small remainders. The approach
based on center-manifold reduction in \cite{BodMat14EHTW} applies to traveling
waves in parabolic equations and, moreover, the authors prove the existence of
a generalized oscillating wave that converges to a limiting wave. We point out
that all previously mentioned articles study limit problems of ``one-scale''
nature, in contrast to \cite{GurRei18PFNS} and the present work where
traveling pulses in ``two-scale FitzHugh--Nagumo systems'' are
investigated, see Section \AAA \ref{su:ReductScalar}. According to Theorem
\ref{thm:ts-conv-2}, the solutions $(u_\eps,w_\eps)$ of \eqref{eq:I.oscFHN}
converge to the solution $(U,W)$ of the two-scale system
 \begin{equation}
  \label{eq:I.tsFHN}
\begin{aligned}
\dot U(t,x)&=  D_\rmu^\text{eff} U_{xx}(t,x) + \int_\bbT F(y, U(t,x))\dd y
-\int_\bbT \alpha(y) W(t,x,y) \dd y, 
\\
\dot W(t,x,y) & = \big( D_\rmw(y) W_y(t,x,y)\big)_y - \lambda
(y)W(t,x,y) + \beta (y) U(t,x),
\end{aligned}
\end{equation}
where the microscopic variable $y$ lies in the circle $\bbT=\R_{\!/\Z}$. The
importance of the two-scale system \eqref{eq:I.tsFHN} is that the microscopic
variable $y$ decouples from the macroscopic space variable $x$ such that this
system admits exact traveling waves of the form
$(U(x{-}ct),W(x{-}ct,y))$. Moreover, the coupling from $U$ to $W$ and its
feedback are local in $x\in \R$ and hence can be captured in spatially local
memory terms as in \eqref{eq:I.ttv}.

The paper is structured as follows. In Section \ref{se:ExistFront} we develop
our existence result for traveling fronts for the memory equation
\eqref{eq:I.Memory}, see Theorem \ref{th:MainResult}  which relies on the
comparison principle in the spirit of \cite{Chen97EUAS, AchKue15TWBE} and on new a
priori bounds for the front speed $c$ in Theorem \ref{thm:SpeedBounds}. \EEE In
Section \ref{se:Derivation} we first present the derivation of memory equations
from coupled, but homogeneous systems, and secondly we show that two-scale
homogenization can be used to derive effective two-scale limits, which again lead to
the same memory equation. Finally, Section \ref{su:CubicCase} compares the
abstract theory with numerical results in the special case that $F$ is the
classical bistable cubic polynomial $-u(u{-}a)(u{-}1)$ and the memory
kernel is simply given by $\wh\Gamma(\tau)=-\beta\,\ee^{-\tau}$.

\section{Existence of traveling fronts}
\label{se:ExistFront}

We first describe the setup and the assumptions for our theory in Section
\ref{su:SetupAss}. In Section \ref{su:aux-prob} we introduce the spatially
nonlocal equation with the auxiliary speed $\ttv$ and show that for traveling
fronts these equations are related. The main technical part are Sections
\ref{sec:comparison} and \ref{su:FrontsAux} where we exploit the comparison
principles developed in \cite{Chen97EUAS}. The main results are presented in
Section \ref{su:Results}, where we also discuss potential generalizations.

\subsection{Setup of the memory equation and assumptions}
\label{su:SetupAss}

\AAA In all of Section \ref{se:ExistFront} we study the Nagumo-type
reaction-diffusion equation (cf.\ \cite{NaArYo62APTL,Mcke70NE})
with linear memory term in the form
\begin{equation}
  \label{eq:NaguMemo}
  \dot u(t,x) = D u_{xx}(t,x) + F(u(t,x)) + \gamma \int_0^\infty \Gamma(\tau)
  u(t{-}\tau,x) \dd \tau .  
\end{equation}
Here, the nonnegative memory kernel $\Gamma$ is normalized, and hence satisfies 
\begin{equation}
  \label{eq:GammaConds}
  \Gamma \in \rmC^0({[0,\infty[}), \quad \Gamma(\tau)\geq 0 \text{ for all
  }\tau\geq 0, \quad \int_0^\infty \Gamma(\tau)\dd \tau=1. 
\end{equation}
To formulate the precise  conditions on the bistable nonlinearity  $F$ we
introduce the tilted function 
\[
F_\gamma (u)= F(u)+ \gamma u \quad \text{for } u\in \R. 
\]

\noindent\textbf{Hypotheses (H):}
\begin{itemize}
  \itemsep-0.3em
\item[\bfseries(H1)] We have $F\in \rmC^2(\R)$ and there exists $\gamma_*>0$
  such that for all $\gamma\in [0,\gamma_*]$ the function $F_\gamma$ \EEE has
  exactly three zeros, which we assume to be
  $u_-^\gamma < u_\rmm^\gamma < u_+^\gamma$ and which satisfy
  $F_\gamma'(u_-^\gamma )<0$, $F'(u^\gamma_\rmm)>0$, and $ F_\gamma'(u^\gamma_+)< 0$.

\item[\bfseries(H2)] The memory kernel $\Gamma$ satisfies \eqref{eq:GammaConds}
  \AAA and $\wh g_1:=\int_0^\infty \Gamma(\tau)\tau \dd \tau <\infty$. \EEE
\end{itemize} 

The assumptions imply that the three constant functions $u(t,x)= u^\gamma_\alpha$
with $\alpha \in \{-,\rmm,+\}$ are indeed trivial solutions for
\eqref{eq:NaguMemo}. The solutions $u_\pm^\gamma$ will be stable, while the middle
solution $u^\gamma_\rmm$ is unstable. Our aim is to show the existence of nontrivial
traveling fronts $u(t,x)=\calU(x{-}c t)$ connecting the two stable levels
$u^\gamma_\pm$, namely $\calU(\xi)\to u^\gamma_\pm$ for $\xi \to \pm \infty$. Of course,
then also a reflected traveling front $u(t,x) = \calU^\text{refl} 
(x{-}c^\text{refl}t)$ exists satisfying $ \calU^\text{refl}(\xi)\to u^\gamma_\mp$
for $\xi \to \pm \infty$. Indeed, using the reflection symmetry $x \to -x$ of
\eqref{eq:NaguMemo} gives $\calU^\text{refl}(\xi) =\calU({-}\xi)$ and
$c^\text{refl}= - c$.  \EEE

\subsection{Traveling waves for the auxiliary equation}
\label{su:aux-prob}

The works in \cite{Chen97EUAS,AchKue15TWBE} allow for nonlocal terms
in the reaction-diffusion equation, but only for spatial nonlocality
and not for temporal one. However, for traveling waves space and time
coincide up to a scaling, so we look at an auxiliary problem, where we
choose a corresponding spatial nonlocality. For this we have to
choose an \emph{\AAA auxiliary \EEE wave speed} $\ttv $ and arrive at the
\emph{auxiliary problem with spatial nonlocality}: 
\begin{equation}
  \label{eq:Aux1}
   \dot {\wt u}(t,x) = D \wt u_{xx}(t,x) + F(\wt u(t,x)) + \gamma \int_0^\infty\!
   \Gamma(\tau) \, \wt u(t,x{+} \ttv  \tau) \dd \tau. 
\end{equation}
The basis of our theory is the following simple proposition that
connects the existence of traveling waves for the original problem with
that of the auxiliary one.

\begin{proposition}
 \label{pr:TimeSpace}
 If \eqref{eq:Aux1} has a bounded traveling-wave solution
 $\wt u(t,x)=\calU( x{-}\ttc t)$ and the wave speed $\ttc $ matches the
 auxiliary speed $ \ttv $ occurring as parameter in \eqref{eq:Aux1}, then
 $u(t,x)= \calU( x{-}\ttv t) $ is also a solution of \eqref{eq:NaguMemo}.
\end{proposition}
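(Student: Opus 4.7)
The proof is essentially a direct change-of-variables check, so the plan is to substitute the traveling-wave ansatz into both equations and compare the resulting profile equations.

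First I would substitute $u(t,x) = \calU(x - \ttv t)$ into the memory equation \eqref{eq:NaguMemo}. Writing $\xi = x - \ttv t$, the time derivative produces $-\ttv\,\calU'(\xi)$, the diffusion term produces $D\,\calU''(\xi)$, and the pointwise reaction gives $F(\calU(\xi))$. The essential step is the memory integral: since $u(t-\tau,x) = \calU(x - \ttv(t-\tau)) = \calU(\xi + \ttv\tau)$, the temporal convolution transforms exactly into a spatial one,
\begin{equation*}
\int_0^\infty \Gamma(\tau)\, u(t{-}\tau,x)\dd\tau
= \int_0^\infty \Gamma(\tau)\, \calU(\xi + \ttv\tau)\dd\tau.
\end{equation*}
Thus \eqref{eq:NaguMemo} reduces to the profile equation
\begin{equation*}
-\ttv\, \calU'(\xi) = D\, \calU''(\xi) + F(\calU(\xi)) + \gamma \int_0^\infty \Gamma(\tau)\, \calU(\xi + \ttv\tau)\dd\tau.
\end{equation*}

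Second, I would substitute the traveling-wave ansatz $\wt u(t,x) = \calU(x - \ttc t)$ into the auxiliary equation \eqref{eq:Aux1} with the same profile $\calU$. Setting $\eta = x - \ttc t$, the left side becomes $-\ttc\,\calU'(\eta)$, the diffusion term gives $D\,\calU''(\eta)$, and the spatial nonlocal term yields $\gamma \int_0^\infty \Gamma(\tau)\, \calU(\eta + \ttv\tau)\dd\tau$ (note that the argument $x + \ttv\tau$ in \eqref{eq:Aux1} contains $\ttv$, not $\ttc$, so the shift inside $\calU$ is by $\ttv\tau$). Using the hypothesis $\ttc = \ttv$, the resulting profile equation is identical to the one derived above from \eqref{eq:NaguMemo}. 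Hence, whenever $\calU$ solves the auxiliary profile equation with matching speeds, the function $(t,x) \mapsto \calU(x - \ttv t)$ automatically satisfies \eqref{eq:NaguMemo}.

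The only technical point to address is the well-definedness of the integrals in both representations. Boundedness of $\calU$ combined with $\Gamma \in L^1({[0,\infty[})$ from \eqref{eq:GammaConds} immediately gives absolute convergence of $\int_0^\infty \Gamma(\tau)\, \calU(\xi + \ttv\tau)\dd\tau$ uniformly in $\xi$, which legitimizes the substitution $\calU(x - \ttv(t-\tau)) = \calU(\xi + \ttv\tau)$ inside the integral. There is no real obstacle here: the proposition is a structural observation rather than an analytic difficulty, and its value lies entirely in reducing the fixed-point problem for $\ttv$ to the existence theory for spatially nonlocal fronts developed in \cite{Chen97EUAS}.
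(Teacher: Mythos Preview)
Your proof is correct and follows essentially the same approach as the paper: both arguments reduce to the observation that, for a traveling-wave ansatz with $\ttc=\ttv$, the temporal memory integral $\int_0^\infty \Gamma(\tau)\,\calU(\xi+\ttv\tau)\dd\tau$ coincides with the spatial nonlocal integral. The paper presents this by directly matching the two integral terms, while you derive and compare the two profile equations; the content is the same.
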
 
\begin{proof} Obviously, the partial derivatives and the local function $F(u)$
  in \eqref{eq:Aux1} and  \eqref{eq:NaguMemo} coincide, 
  so it remains to match the integral terms. The straightforward calculation \AAA
\begin{align*}
&\int_0^\infty\!\Gamma(\tau)\, \wt u(t,x{+}\ttv \tau) \dd \tau =
\int_0^\infty\!\Gamma(\tau)\, \calU(x{+}\ttv \tau-\ttc  t ) \dd \tau  \\
& \overset{\ttv  =\ttc }{=} \int_0^\infty\! \Gamma(\tau) \, 
\calU(x + \ttv(t{-}\tau)) \dd \tau = 
\int_0^\infty \! \Gamma(\tau) \,u(t{-}\tau,x) \dd \tau,
\end{align*}  
relying strongly on $\ttc = \ttv$, turns the spatial nonlinearity into a temporal
memory. \EEE This gives the desired result. 
\end{proof}

The above result does not specify the form of the traveling wave, hence it is
applicable to traveling pulse, traveling fronts, or to (quasi-)periodic  wave
trains.

\subsection{Comparison principles for the auxiliary equation}
\label{sec:comparison}

\AAA We are now in a position to apply the general theory for nonlocal
parabolic equations as developed in 
\EEE  \cite{Chen97EUAS,AchKue15TWBE} to our auxiliary problem \eqref{eq:Aux1}
for \AAA $\gamma \in [0,\gamma_*]$. For this, we define the nonlinear operator
$u\mapsto \calA_{\gamma,\ttv }[u] $ via 
\begin{align}
  \label{eq:calA}
\calA_{\gamma,\ttv }[u](x)&:= D u_{xx}(x) + F(u(x))  +  \gamma \int_0^\infty\! \Gamma(\tau)
  \, u(x{+} \ttv \tau ) \dd \tau 
\end{align}
in the notation of \cite{Chen97EUAS}.  We first observe the relation
$\calA_{\gamma,\ttv}[\lambda \bm1] = F_\gamma(\lambda)\bm1$ for all
$\lambda \in \R$, where $\bm1$ denotes the constant function $ u(x) = 1
$. Moreover, the Fr\'echet derivative reads
\[
\big(\calA'_{\gamma,\ttv}[u](\varphi)\big)(x) = D \varphi_{xx}(x)
+F'(u(x))\varphi(x) + \gamma \int_0^\infty \!\Gamma(\tau) \, \varphi(x{+} \ttv
\tau) \dd \tau.
\]
Hence, we obtain
$\big(\calA'_{\gamma,\ttv}[u{+}v](\bm1)\big)(x) -
\big(\calA'_{\gamma,\ttv}[u](\bm1)\big)(x) = F'(u(x){+}v(x))- F'(u(x)) $. With
this and the assumption $F\in \rmC^2(\R)$ from (H1), the two assumptions (A1)
and (A3) in \cite[Sect.\,2]{Chen97EUAS} are satisfied. 

The crucial and nontrivial assumption (A2) in \cite{Chen97EUAS} concerns the
strong comparison principle and is the content of the following
proposition. To formulate it we introduce the notion of super- and
subsolutions for \eqref{eq:Aux1}: We \EEE  call $u^*$ a supersolution and $u_*$ a
subsolution if the relations  $\dot{u}^* - \calA_{\gamma,\ttv }[u^*]\geq 0$ and
$\dot{u}_* -\calA_{\gamma,\ttv }[u_*]\leq 0$ hold, respectively.

\begin{proposition}[Strong comparison principle]
\label{pr:comparison}
Let the Hypotheses (H1)--(H2) and $\gamma \in [0,\gamma_*]$ hold. If $u^*$ is a
supersolution and $u_*$ is a subsolution of \eqref{eq:Aux1} such that
$u^*(0, \cdot) \geq u_*(0, \cdot)$ and \AAA
$u^*(0, \cdot){-} u_*(0, \cdot) \not\equiv 0$, then we have
$u^*(t, \cdot) \gneqq u_*(t, \cdot)$ for all $t> 0$.
\end{proposition}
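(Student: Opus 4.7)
The plan is to set $v:=u^*-u_*$, derive a linear parabolic differential inequality for $v$, and obtain the conclusion in two steps: first the weak comparison $v(t,\cdot)\geq 0$, then the strict statement $v(t,\cdot)\not\equiv 0$ for every $t>0$. Using (H1) and the implicit boundedness of $u^*,u_*$, the mean value theorem supplies a bounded coefficient
\[
c(t,x):=\int_0^1 F'\big(u_*(t,x)+\sigma\,v(t,x)\big)\dd\sigma,\qquad \|c\|_\infty\leq M,
\]
and subtracting the super- and subsolution inequalities yields
\[
\dot v(t,x) - D\,v_{xx}(t,x) - c(t,x)\,v(t,x) \;\geq\; \gamma\int_0^\infty \Gamma(\tau)\,v(t,x{+}\ttv\tau)\dd\tau.
\]
Since $\gamma\geq 0$ and $\Gamma\geq 0$, the nonlocal term on the right preserves the sign of $v$; this is the one observation that lets the whole argument go through despite the nonlocality.

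For the weak comparison I would pass to $w:=\ee^{-Kt}v$ with $K>M+\gamma$, so that $w$ satisfies the analogous inequality with local coefficient $c{-}K$ bounded above by $-\gamma$. If $w$ attained a strictly negative infimum at an interior point $(t_0,x_0)$, then $\dot w\leq 0$ and $w_{xx}\geq 0$ there, and the nonlocal integral is bounded below by $w(t_0,x_0)\int\Gamma\dd\tau = w(t_0,x_0)$; substituting gives the contradiction $0\geq (c{-}K{+}\gamma)\,w(t_0,x_0) > 0$. On the unbounded domain $x\in\R$ the infimum need not be attained, so the standard remedy is to replace $w$ by $w+\delta\,\ee^{\alpha t}\cosh(\beta x)$, choose $\alpha,\beta$ large enough relative to $D,M,\gamma$ that this modified function is a strict supersolution in the same sense, force the infimum to be attained via the growth as $|x|\to\infty$, and finally let $\delta\to 0$.

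For the strict statement I would discard the nonnegative nonlocal term and work with the purely local inequality $\dot v - D\,v_{xx} - c(t,x)\,v \geq 0$ for the nonnegative function $v$. Suppose for contradiction that $v(t_1,\cdot)\equiv 0$ for some $t_1>0$; then $v$ attains its global minimum value $0$ at every point of $\{t_1\}\ti\R$, and the classical strong maximum principle applied to $-v$ on the cylinder $(0,t_1)\ti\R$ propagates the minimum backward in time to force $v\equiv 0$ on the entire strip $[0,t_1]\ti\R$, contradicting $v(0,\cdot)\not\equiv 0$. The principal technical obstacle I anticipate is the execution of both maximum-principle arguments on the unbounded spatial domain without decay assumptions on $u^*,u_*$, which is exactly what the exponential barrier construction in the previous paragraph is designed to address; the shift $x\mapsto x+\ttv\tau$ in the nonlocal term causes no further difficulty, because $\Gamma\geq 0$ and the shift leaves $\R$ invariant, which is precisely what verifies the abstract hypothesis (A2) of \cite{Chen97EUAS} in our setting.
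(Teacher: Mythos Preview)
Your two-step plan---first $v:=u^*-u_*\geq 0$ via a barrier/maximum-principle argument, then strict positivity by dropping the nonnegative nonlocal term and applying the strong parabolic maximum principle---matches the paper's structure, and your Step~2 is essentially correct (the paper does it by comparison with the heat-kernel evolution $\ee^{-K_0 t}H_D(t)*v(0,\cdot)$, which gives pointwise $v(t,x)>0$; your strong-maximum-principle argument, applied at a single vanishing point rather than under the unnecessarily strong assumption $v(t_1,\cdot)\equiv 0$, yields the same).

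The gap is in Step~1: your exponential barrier $\delta\,\ee^{\alpha t}\cosh(\beta x)$ is incompatible with the nonlocal shift under (H2). To keep the perturbed function a supersolution of the linearized operator you must control
\[
\int_0^\infty \Gamma(\tau)\,\cosh\!\big(\beta(x{+}\ttv\tau)\big)\dd\tau
\;\leq\;\cosh(\beta x)\int_0^\infty \Gamma(\tau)\,\ee^{\beta|\ttv|\tau}\dd\tau,
\]
but (H2) only guarantees $\int_0^\infty \tau\,\Gamma(\tau)\dd\tau<\infty$; for instance $\Gamma(\tau)\sim\tau^{-3}$ is admissible, yet the integral on the right diverges for every $\beta>0$ when $\ttv\neq 0$, so no choice of ``$\alpha,\beta$ large enough'' repairs this. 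The paper sidesteps the issue by using a \emph{bounded} comparison family $\varphi_\sigma(t,x)=-\bigl(\tfrac12+\sigma z(x)\bigr)\delta\,\ee^{Kt}$ with $z(x)=(1{+}3x^2)/(1{+}x^2)\in[1,3]$, so the nonlocal term is trivially dominated by $2\gamma\delta\,\ee^{Kt}$. Since this barrier does not blow up at infinity, a touching point is produced not by growth but by sliding the parameter $\sigma$: one checks $\varphi_{1/2}\leq w$ globally while $\varphi_{1/4}(T,0)>w(T,0)$, hence the infimal admissible $\sigma_*\in[\tfrac14,\tfrac12]$ yields a touch inside a fixed compact set, and the contradiction follows with $K=4D+4\gamma_*$.
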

\begin{proof} \AAA We follow some ideas in the proof of
  \cite[Thm.\,5.1]{Chen97EUAS} for establishing condition (C2). \EEE By
  assumption the difference $w(t,x) := u^*(t,x) - u_*(t,x)$ satisfies
  $w(0,\cdot) \geq 0$ and
\begin{align*}
\dot{w} \geq D w_{xx} + F(u^*) - F(u_*) + \bbJ w 
\quad\text{with}\quad
(\bbJ w)(t,x) := \gamma \int_0^\infty \!\Gamma(\tau) w(t,x{+}\ttv \tau) \dd\tau .
\end{align*}

\emph{Step 1.} We show by contradiction that the inequality
$\dot{w} \geq D w_{xx} + \bbJ w $ implies $w \geq 0$.  We set \AAA
$K := 4\gamma_* {+}4D$ \EEE with $\gamma_*$ from (H1). By assuming $w\ngeq0$,
there exist $\delta>0$ and $T>0$ such that $w(t,x) > - \delta \ee^{Kt}$ for
$t\in \left[0,T \right[$ and $\inf_{x\in\bbR} w(T,x) =
-\delta\ee^{KT}$. Without loss of generality, we may assume
$w(T,0) < -\frac34 \delta \ee^{KT}$.

Next we define the comparison function
$\varphi_\sigma(t,x) := -(\frac12+ \sigma z(x))\, \delta \ee^{Kt}$ \AAA with
$z(x)=\frac{1+3x^2}{1+x^2}$, \EEE where the parameter $\sigma > 0 $ will be
fixed later.  On the one hand, using $z\geq 1$ we have, for
$\sigma \geq \frac12$, the estimate
\begin{align*} 
\ts  \varphi_\sigma(t,x) \leq -\big(\frac12 + \frac12\,1\big) \, \delta \,\ee^{Kt}
  =  -\delta \ee^{Kt} \leq w(t,x)  
\end{align*}
for all $t \in [0,T]$ and $x\in\bbR$. On the other hand, using $z(0)=1$ for
$\sigma = 1/4$ we find 
\begin{align*}
 \ts \varphi_{1/4}(T,0) =-\big(\frac12 + \frac14 1\big) \, \delta \,\ee^{KT}  
  = -\frac34 \delta \ee^{KT} > w(T,0) .
\end{align*}
We define $\sigma_*$ as follows:
\[
\sigma_*:=\inf \Sigma \quad \text{with }
\Sigma:=\bigset{\sigma >0}{ \forall\, (t,x)\in [0,T]\ti \R:\ \varphi_\sigma(t,x)
  \leq w(t,x) } 
\]
and observe that the above estimates imply $\sigma_*\in [\frac14,\frac12]$. By
continuity, we see that $\Sigma$ is closed, hence $\sigma_*\in \Sigma$, which
implies $\sigma_*>\frac14$.  Using $z(x)> 2$ for $|x|> 2$ for all
$\sigma\geq \frac14$, we have
$\varphi_\sigma(t,x) < -(\frac12+\frac14 2) \delta\ee^{K t}\leq w(t,x)$. Hence,
for $\sigma \in {[\frac14,\sigma_*[} \subset \R\setminus \Sigma$ there exists
$(t_\sigma, x_\sigma) \in [0,T] \ti [-1,1]$ such that
$\varphi_\sigma(t_\sigma, x_\sigma) \geq w(t_\sigma, x_\sigma)$.

By the continuity of $w$ and $(\sigma,t,x)\mapsto \varphi_\sigma(t,x)$ and by
compactness of $[0,T] \ti [-1,1]$, we hence
find $(t_*,x_*) \in [0,T] \ti [-1,1]$ such that $\varphi_{\sigma_*}(t_*,x_*) \geq
w(t_*,x_*)$. Obviously, we have $t_*>0$, but $t=T$ may be possible.  
However $\sigma_*\in \Sigma$ means $\varphi_{\sigma_*}(t,x) \leq 
w(t,x)$ for all $(t,x)\in [0,T]\ti \R$. Thus, we conclude $w(t_*,x_*)=
\varphi_{\sigma_*}(t_*,x_*)$, $w_x(t_*,x_*)= \pl_x\varphi_{\sigma_*}(t_*,x_*)$,
\[
w_{xx}(t_*,x_*) \geq \pl^2_x\varphi_{\sigma_*}(t_*,x_*), \quad \text{and }
\left\{ \ba{cl}\dot w(t_*,x_*)= \dot\varphi_{\sigma_*}(t_*,x_*) &\text{for }t_*\in
  {]0,T[}, \\  \dot w(t_*,x_*)\leq \dot\varphi_{\sigma_*}(t_*,x_*) &\text{for }t_*= T. 
\ea\right.
\]

Using (H2) (nonnegativity of $\Gamma$) and $\gamma \in [0,\gamma_*]$ together
with the inequalities (i) $\frac34 \leq \frac12+ \sigma_*z(x_*)$ and 
(ii) $\frac12+\sigma_* z(x)\leq 2$ for all $x \in \R$, 
 the following chain of inequalities for the
particular point $(t_*,x_*)$ holds: 
\begin{align*}
-\tfrac34 \delta K\ee^{Kt_*} 
& \overset{\text{(i)}}\geq -\big(\tfrac12 + \sigma_* z(x_*)\big) \delta K
\ee^{Kt_*} = \dot{\varphi}_{\sigma_*}(t_*,x_*) 
  \\
&\geq \dot{w}(t_*,x_*)
\geq D w_{xx}(t_*,x_*) + \bbJ w(t_*,x_*) \\
& \!\!\! \overset{w\geq \varphi_{\sigma_*}}\geq 
  D \pl_x^2\varphi_{\sigma_*}(t_*,x_*) + \ts \gamma \int_0^\infty \!\Gamma(\tau)
  \, \varphi_{\sigma_*}(t_*,x_*{+}\ttv \tau)) \dd \tau\\
& \overset{\text{(ii)}}\geq  
-D \sigma_* z''(x_*)\delta \ee^{Kt_0} -2 \gamma \delta \ee^{Kt_*} \geq -\big(
2D+2\gamma_*)\delta \ee^{Kt_*} = - \tfrac12 \delta K \ee^{Kt_*},
\end{align*}
where we used $z''(x)\leq 4$ and $\sigma_*\leq \frac12$ and the definition of
$K= 4\gamma_*{+}4D$. \EEE
Thus, we have reached a contradiction and the assertion $w \geq 0$ is proven.

\emph{Step 2.} Without loss of generality, we can assume that $u_*(t,x)$ and
$u^*(t,x)$ only attain values in the bounded interval
$[-\ttC_\mathrm{max}, \ttC_\mathrm{max}]$ for some constant
$\ttC_\mathrm{max} > 0$ depending on the roots of $F$ according to
(H2). Therefore, we have $F(u^*) - F(u_*) \geq K_0 w$ with
$K_0 :=  - \min_{|u|\leq \ttC_\mathrm{max}}F'(u) >0$ and
\begin{align*}
\dot{w} \geq D w_{xx} + F(v^*) - F(v_*) + \bbJ w 
\geq D w_{xx} - K_0 w + \bbJ w  \geq  D  w_{xx} - K_0 w,  
\end{align*}
\AAA where the last estimate follows because of $w\geq 0$ from Step 1 and (H2),
giving $\bbJ w\geq 0$. 

Finally, we use that $w(0,\cdot)$ is nonnegative and not identical to
$0$. Hence, the solution $\psi$ of the linear equation
$\dot\psi=D\psi_{xx} - K_0 \psi$ is strictly positive, as it is given by
$\psi(t,\cdot) = \ee^{-K_0t} H_D(t){*}w(0,\cdot)$, where $H_D(t)$ is the strictly
positive heat kernel.  We now set $W=\ee^{K_0t} (w{-}\psi)$ and obtain
$W(0,\cdot)\equiv 0$ and $\dot W \geq D W_{xx} $. As in Step 1 (with $\bbJ=0$) we
obtain $W\geq 0$ and conclude
\[
u^*(t,x) - u_*(t,x)=w(t,x) \geq \psi(t,x) >0 \quad \text{for all } t>0 \text{
  and }x\in \R. 
\]
This is the desired  strong comparison principle.
\end{proof}

\AAA As an important technical tool we obtain the following simple result
concerning the speed of traveling fronts. 

\begin{proposition}[Comparison of speeds]
\label{pr:CompSpeed} 
Assume that the auxiliary equation \eqref{eq:Aux1} has a traveling front
$u(t,x)=\calU(x{-}ct)$. If there is a traveling-front subsolution
$w_*(t,x)=W_*(x{-}c_*t)$ satisfying
\begin{subequations}
\begin{align}
\label{eq:OrderLimits}
 \lim_{\xi\to -\infty} W_*(\xi)<  \lim_{\xi\to -\infty} \calU(\xi)<
 \lim_{\xi\to +\infty} W_*(\xi) <  \lim_{\xi\to +\infty} \calU(\xi),
\end{align}
then we have $c\leq c_*$. If there is a traveling-front supersolution
$w^*(t,x)=W^*(x{-}c^*t)$   satisfying 
\begin{align}
  \lim_{\xi\to -\infty} \calU(\xi)< \lim_{\xi\to -\infty} W^*(\xi) <  \lim_{\xi\to +\infty} \calU(\xi) <\lim_{\xi\to +\infty} W^*(\xi),
\end{align}
\end{subequations}
then we have $c\geq c^*$.
\end{proposition}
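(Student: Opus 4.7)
The plan is to combine the strong comparison principle of Proposition~\ref{pr:comparison} with a sliding argument in the moving frame of $\calU$. I would treat the subsolution case first, arguing by contradiction, and then observe that the supersolution case is entirely symmetric.

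Assume, toward a contradiction, that $c > c_*$. Because the operator $\calA_{\gamma,\ttv}$ is translation invariant in $x$, every spatial shift $(t,x)\mapsto W_*(x - c_*t - \xi_0)$ remains a traveling-front subsolution of \eqref{eq:Aux1}. Using the strict ordering of limits in \eqref{eq:OrderLimits} together with the monotonicity of bistable traveling fronts, I would first argue that $\xi_0>0$ can be chosen so large that
\[
W_*(x - \xi_0) \le \calU(x) \quad \text{for all } x\in\R,
\]
with strict inequality somewhere. The verification is elementary: the gap $\calU(+\infty) > W_*(+\infty)$ combined with monotonicity yields $\calU(x) \ge W_*(+\infty) \ge W_*(x-\xi_0)$ for $x$ beyond some $x_0$; for $x\le x_0$, choosing $\xi_0$ large enough forces $W_*(x-\xi_0)$ to be so close to $W_*(-\infty) < \calU(-\infty) \le \calU(x)$ that the pointwise inequality holds there as well.

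With this initial ordering in place, Proposition~\ref{pr:comparison} applied to the supersolution $\calU(x-ct)$ and the subsolution $W_*(x-c_*t-\xi_0)$ delivers
\[
\calU(x-ct) \;\ge\; W_*(x-c_*t-\xi_0) \quad \text{for all } t\ge 0,\ x\in\R.
\]
Evaluating in the moving frame $x = ct + x_1$ with fixed $x_1$, the left-hand side is constantly $\calU(x_1)$, while
\[
W_*\bigl((c-c_*)\,t + x_1 - \xi_0\bigr) \;\xrightarrow[t\to\infty]{}\; W_*(+\infty)
\]
because $c - c_* > 0$. Passing to the limit gives $\calU(x_1) \ge W_*(+\infty)$ for every $x_1\in\R$, and sending $x_1 \to -\infty$ yields $\calU(-\infty) \ge W_*(+\infty)$, contradicting the strict middle inequality $\calU(-\infty) < W_*(+\infty)$ in \eqref{eq:OrderLimits}. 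Hence $c\le c_*$.

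For the supersolution case I would proceed symmetrically: assuming $c < c^*$, shift $W^*$ far to the left to obtain $W^*(x+\xi_0) \ge \calU(x)$ initially, apply Proposition~\ref{pr:comparison}, and then run the same moving-frame limit; because now $c-c^*<0$ the shifted argument tends to $-\infty$, producing $W^*(-\infty) \ge \calU(+\infty)$ in the limit, which contradicts \eqref{eq:OrderLimits}. The only genuinely delicate step throughout is securing the pointwise initial ordering of the two translated profiles from the mere ordering of their limits; this relies essentially on monotonicity of the bistable fronts together with the strict gaps at $\pm\infty$ guaranteed by \eqref{eq:OrderLimits}. Once that initial ordering is secured, everything else is a direct application of the comparison principle followed by the moving-frame limit.
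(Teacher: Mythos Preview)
Your proposal is correct and follows essentially the same approach as the paper: shift the subsolution profile to secure an initial pointwise ordering, apply the strong comparison principle, and derive a contradiction from $c>c_*$ by taking limits along a suitable ray. The only cosmetic difference is that the paper evaluates at $x=\tfrac12(c{+}c_*)t$ and passes to the single limit $t\to\infty$, whereas you fix $x=ct+x_1$, let $t\to\infty$, and then send $x_1\to-\infty$; both routes yield $\calU(-\infty)\ge W_*(+\infty)$ and the same contradiction.
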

\begin{proof} It suffices to show the result for the subsolution, since the
  proof for the supersolutions is analogous. 

  As the limits of $W_*$ and $\calU$ at $\xi=\pm\infty$ are strictly ordered,
  we can shift $W_*$ to the right to make it smaller for $t=0$. More precisely,
  we set $u^*(t,x)=\calU(x{-}ct)$ and $u_*(t,x)=W_*(x{-}\xi{-}c_*t)$ and find
  $\xi $ big enough such that $u^*(0,x) = \calU(x) >
  W_*(x{-}\xi)=u_*(0,x)$. The comparison principle in Proposition
  \ref{pr:comparison} now implies
\begin{equation}
  \label{eq:CompareTF.TF}
  u^*(t,x)=\calU(x{-}ct)>
W_*(x{-}\xi{-} c_*t)=u_*(t,x) \quad \text{ for all } t\geq 0 \text{ and }x\in \R. 
\end{equation}

Now we assume $c> c_*$, insert $x= \frac12(c{+}c_*)t$ into
\eqref{eq:CompareTF.TF}, and take the limit $t\to \infty$. This leads to 
$\lim_{\xi\to -\infty} \calU(\xi) \geq \lim_{\xi\to +\infty} W_*(\xi)$, which
contradicts the middle assumption in \eqref{eq:OrderLimits}. Hence the
assumption $c>c_*$ is false and $c\leq c_*$ is established. 
\end{proof}
\EEE

\subsection{Existence of traveling fronts for the auxiliary equation}
\label{su:FrontsAux}

\AAA We are now in the position to formulate our result concerning the
existence of traveling fronts in the auxiliary equation \eqref{eq:Aux1}. The
proof will be a direct application of the corresponding result
\cite[Thm.\,5.1]{Chen97EUAS} for spatially nonlocal equations. We obtain a
two-parameter family $\calU_{\gamma,\ttv}$ of traveling fronts depending on the
auxiliary speed $\ttv\in \R$ and the strength $\gamma \in [0,\gamma_*]$ of the
nonlocal term. \EEE

\begin{proposition}
\label{pr:ExiFrontAux}
Let the Hypotheses (H1) and (H2) hold and let $\gamma \in [0,\gamma_*]$. Then, for
 all $\ttv \in \R$ there exists a unique
(up to translation) traveling-front solution
\[
    u(t,x)=\calU_{\gamma,\ttv }\big(x{-} \ttc  t\big)\text{ solving
      \eqref{eq:Aux1} with wave speed } \ttc  = \ttC(\gamma,\ttv ), 
\] 
\AAA which is characterized by 
\begin{subequations}
  \label{eq:TravWave}
\begin{align}
 \label{eq:TravWaveODE}
&{-} \ttc \,\calU'_{\gamma,\ttv}(\xi) = D\,\calU''_{\gamma,\ttv}(\xi) +
F(\calU_{\gamma,\ttv}(\xi)) + \gamma \int_0^\infty \Gamma(\tau) \, 
  \calU_{\gamma,\ttv}(\xi{+}\ttv \tau) \dd \tau,   
\\ \label{eq:TravWave.BC}
& \calU_{\gamma,\ttv }(\xi)\to u_-^\gamma \ \text{ for }\xi\to -\infty \quad
  \text{ and } \quad  
  \calU_{\gamma,\ttv }(\xi)\to u^\gamma_+ \ \text{ for }\xi\to +\infty .
\end{align}
\end{subequations}
These \EEE traveling fronts satisfy the properties
\begin{align}
  \label{eq:WaveProp}
  \calU_{\gamma,\ttv } \in \rmC^2(\bbR), \quad
  \calU_{\gamma,\ttv }'(\xi) > 0 \text{ on } \bbR , \quad
  \calU'_{\gamma,\ttv }(\xi) \to 0 \text{ as } |\xi|\to\infty .
\end{align}
Moreover, they are globally asymptotically stable in
the following sense: There exists $\kappa > 0$ such that for all 
solutions $u$ of \eqref{eq:Aux1}  satisfying
\begin{align}
  \label{eq:initial-value}
  \forall\, x\in\R: u(0,x) \in [u_-^\gamma{-}\kappa, u^\gamma_+{+}\kappa] , 
  \qquad
  \liminf_{x\to+\infty} u(0,x) > u^\gamma_\rmm, \quad
  \limsup_{x\to-\infty} u(0,x) < u^\gamma_\rmm, 
\end{align}
there exist constants $\xi$ and $K$ (depending on $u(0,\cdot)$) such that 
\begin{align}
  \label{eq:shift-decay}
  \Vert u(t,\cdot) - \calU_{\gamma,\ttv }(\,\cdot\, {-} \ttc  t {+} \xi)
  \Vert_{\rmL^\infty(\bbR)} \leq Ke^{-\kappa t} \quad\text{for } t\geq 0 . 
\end{align}
\end{proposition}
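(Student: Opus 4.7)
The plan is to apply \cite[Thm.\,5.1]{Chen97EUAS} directly to the auxiliary equation \eqref{eq:Aux1} by verifying its hypotheses for the operator $\calA_{\gamma,\ttv}$ defined in \eqref{eq:calA}. The structural preparation in Section~\ref{sec:comparison} already supplies most of the needed ingredients: the continuity/regularity assumptions (A1) and (A3) of \cite{Chen97EUAS} are verified after \eqref{eq:calA} using $F\in\rmC^2(\R)$, and the crucial strong comparison principle (A2) is precisely Proposition \ref{pr:comparison}. What remains is to match the nonlocal structure and the bistability hypothesis to Chen's framework.

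Using the normalization $\int_0^\infty\Gamma(\tau)\dd\tau=1$ from (H2), one has $\calA_{\gamma,\ttv}[\lambda\bm1] = F_\gamma(\lambda)\bm1$, so the constant stationary states are exactly the three zeros $u_-^\gamma<u_\rmm^\gamma<u_+^\gamma$ of the tilted nonlinearity $F_\gamma$, with the stable--unstable--stable sign pattern of $F_\gamma'$ required by (H1). For $\ttv=0$ the nonlocal term reduces to the local coefficient $\gamma u$, and \eqref{eq:Aux1} becomes the classical bistable reaction-diffusion equation with nonlinearity $F_\gamma$, covered by \cite{FifMcl77ASND}. For $\ttv\ne 0$ the substitution $y=\ttv\tau$ recasts the memory term as a one-sided convolution with the nonnegative, integrable kernel $J_\ttv(y)=(\gamma/|\ttv|)\,\Gamma(y/\ttv)$ supported on $\{y\in\R:y\,\ttv\geq 0\}$, which fits the spatially nonlocal framework of \cite{Chen97EUAS}. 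The finite first moment $\wh g_1<\infty$ from (H2) provides the integrability of $y\mapsto y\,J_\ttv(y)$ needed for the wave-profile arguments.

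With Chen's theorem now applicable, one harvests all claimed conclusions in a single step: existence and uniqueness (up to translation) of a traveling-front profile $\calU_{\gamma,\ttv}$ with a unique speed $\ttc=\ttC(\gamma,\ttv)$ solving the integro-differential equation \eqref{eq:TravWaveODE} with boundary conditions \eqref{eq:TravWave.BC}, together with $\rmC^2$-regularity, strict monotonicity $\calU'_{\gamma,\ttv}>0$, and decay of the derivative at $\pm\infty$ as claimed in \eqref{eq:WaveProp}. The global asymptotic stability estimate \eqref{eq:shift-decay} with exponential rate $\kappa>0$ for all initial data in the basin described by \eqref{eq:initial-value} is also part of the conclusion of \cite[Thm.\,5.1]{Chen97EUAS}. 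The rate $\kappa$ only depends on the negative linearized values $F_\gamma'(u_\pm^\gamma)<0$, and since these depend continuously on $\gamma$ on the compact set $[0,\gamma_*]$, the rate can be chosen uniformly in $\gamma\in[0,\gamma_*]$.

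The main obstacle is ensuring that the one-sided, asymmetric kernel $J_\ttv$ truly fits the technical assumptions of \cite{Chen97EUAS}, which were formulated for general integrable nonnegative convolution kernels without requiring even symmetry. Chen's argument relies only on a strong comparison principle together with the Fr\'echet-derivative structure of $\calA_{\gamma,\ttv}$, and since both are already established in our setting (the former being Proposition \ref{pr:comparison}, the latter being the direct computation after \eqref{eq:calA}), this difficulty is essentially resolved. A minor additional point is the identification of the basin of attraction in \eqref{eq:initial-value}: the standard squeezing argument between shifted profiles $\calU_{\gamma,\ttv}(\cdot-\ttc t\pm\xi_0)\pm\delta\,\ee^{-\mu t}$ yields explicit super- and subsolutions, and the strong comparison principle of Proposition \ref{pr:comparison} then forces exponential convergence to a translate of $\calU_{\gamma,\ttv}$.
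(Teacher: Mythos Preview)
Your proposal is correct and follows essentially the same route as the paper: both reduce the claim to \cite[Thm.\,5.1]{Chen97EUAS} by identifying $G(u,p)=F(u)+\gamma p$, $S(u)=u$, and (for $\ttv\neq0$) the one-sided kernel $J_\ttv$, treating $\ttv=0$ as the purely local bistable case, and invoking the strong comparison principle already proved as Proposition~\ref{pr:comparison}.

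The one point where the paper is sharper than your write-up concerns the kernel regularity. Chen's hypothesis (D2) in \cite[Thm.\,5.1]{Chen97EUAS} actually asks for $\Gamma\in\rmC^1(\R)$ with $\int_\R|\Gamma'(\tau)|\dd\tau<\infty$, which is \emph{not} part of (H2). The paper addresses this explicitly: (D2) is only used in Chen's argument to derive his conditions (C2) and (C4); (C2) is the strong comparison principle, supplied independently by Proposition~\ref{pr:comparison}, and (C4) follows from classical parabolic regularity because $D>0$. Your final paragraph gestures at this (``Chen's argument relies only on a strong comparison principle together with the Fr\'echet-derivative structure''), which is the right idea, but you should name the specific hypothesis (D2) that fails and the specific replacements (C2) via Proposition~\ref{pr:comparison} and (C4) via $D>0$, rather than leaving it as a general remark about asymmetric kernels.
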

\begin{proof}
For the evolution equation $\dot{u} = Du_{xx} + G(u,J{*}S(u))$ in
\cite[Eqn.\,(5.1)]{Chen97EUAS}, we distinguish the cases $\ttv=0$ and $\ttv\neq
0$. In the former case, there is no nonlocal term, and  we set $G(u,p)=F(u) +
\gamma u$ and $J\equiv 0$. In the latter case we \EEE identify the quantities 
\begin{align*}
G(u,p) = F(u) +  \gamma p , \quad S(u) = u , \quad
J=J_v: s \mapsto \frac{1}{|\ttv|} \,\Gamma\big( {-}\frac{s}{\ttv} \big), 
\end{align*}
where $\Gamma(\tau)=0$ for $\tau< 0$ is assumed. 

It remains to verify the Assumptions (D1)--(D4) of \cite[Thm.\,5.1]{Chen97EUAS}:
\begin{itemize}
\itemsep-0.2em
\item[(D1)] Clearly, the operator $\calA_{\gamma,\ttv }$ defined in
  \AAA \eqref{eq:calA} is translation invariant, $\calA_{\gamma,\ttv }
  [u(\,\cdot\,{+}h)](x)  = \calA_{\gamma,\ttv } [u(\cdot)](\,\cdot\,{+}h)$ for
  all $h \in \bbR$. The desired properties of the function $F_\gamma: u \mapsto
   F(u)+\gamma u$ characterized by $\calA_{\gamma,\ttv}(\lambda \bm1)
   =F_\gamma(\lambda)\bm1$ follow directly from (H1). \EEE

\item[(D2)] \AAA The condition (H2) is stronger than our condition because it
  additionally asks $\Gamma \in \rmC^1(\R)$ and $\int_\R |\Gamma'(\tau)|\dd
  \tau < \infty$. However, a close inspection of the proof of
  \cite[Thm.\,5.1]{Chen97EUAS} reveals that these additional conditions are not
  needed in the case $D>0$. Indeed, (D2) is used to derive
  (C2) and (C4) there. However, (C2) is the strong comparison principle, which
  holds according to Proposition \ref{pr:comparison}, while (C4) follows from
  classical parabolic regularity theory because of $D>0$. \EEE 

\item[(D3)] The function $G(u,p)= F(u)+\gamma p$ and $S(u)=u$ are
  smooth with $\partial_p G(u,p) = \gamma\geq 0$ and $S'(u) = 1 > 0$.

\item[(D4)] This holds because of $D>0$. 
\end{itemize}
Therefore, \cite[Thm.\,5.1]{Chen97EUAS} yields the desired existence of
traveling fronts.
\end{proof}

\AAA The comparison principle is not only useful for establishing existence and
uniqueness of traveling fronts. It will also be essential to derive 
qualitative properties of the function $ (\gamma,\ttv ) \mapsto
\ttC(\gamma,\ttv)$. We first derive upper and lower bounds for $\ttC$ and then
its continuity, which will be crucial to construct
traveling fronts for the memory equation \eqref{eq:NaguMemo}. The ideas of the
proof follow \cite[Lem.\,3.2 \& Thm.\,3.5]{Chen97EUAS}, but they are much more
explicit, thus providing realistic bounds by assuming reasonable bounds for
$F_\gamma$. In Figure \ref{fig:FgammaBounds} we display the way in which
$F_\gamma$ needs to be estimated. 

\begin{theorem}[Bounds on the front speed]
\label{thm:SpeedBounds} 
Let the Hypotheses (H1) and (H2) hold and fix $\gamma\in [0,\gamma_*]$ and
$\ttv \in \R$.  Assume further that $F_\gamma$ satisfies the estimates
\begin{equation}
  \label{eq:Ass.Fgamma}
  \begin{aligned}
&F_\gamma(u) \geq -\Phi_* \text{ for } u\in [u^\gamma_-,u^\gamma_\rmm]\quad
\text{and} \quad 
F_\gamma(u) \leq \Phi^* \text{ for } u\in [u^\gamma_\rmm,u^\gamma_+],\\
& \exists\, a_*,b_*\in [u^\gamma_\rmm,u^\gamma_+]\ \exists\, \alpha_*>0: \quad
a_*< b_* \  \text{ and } \ F_\gamma(u) \geq \alpha_*(u{-}a_*) \text{ for } u
\in [a_*,b_*], \\
& \exists\, a^*,b^*\in [u^\gamma_-,u^\gamma_\rmm]\ \exists\, \alpha^*>0: \quad
b^*< a^* \  \text{ and } \ F_\gamma(u) \leq \alpha^*(u{-}a^*) \text{ for } u
\in [b^*,a^*].
\end{aligned}
\end{equation}
Then, the speed $\ttc=\ttC(\gamma,\ttv)$ of the traveling front
$\calU_{\gamma,\ttv}$ satisfies 
\begin{subequations}
  \label{eq:SpeedEstim}
\begin{align}
  \label{eq:SpeedEstim.vpos}
\ttv\geq 0:\quad &  -\max\Big\{  \Big(\frac{\Phi^*D}{b^*{-}a^*} \Big)^{1/2} ,\, 
   \frac{\Phi^*\gamma \wh g_1\ttv}{\alpha^*(b^*{-}a^*)} \Big\}
   - \gamma \wh g_1\ttv  \leq  \ttC(\gamma,\ttv) \leq
\Big(\frac{\Phi_*D}{u^\gamma_+{-}u^\gamma_\rmm} \Big)^{1/2}  ,
\\   \label{eq:SpeedEstim.vneg}
\ttv \leq 0:\quad &- \Big(\frac{\Phi^*D}{u^\gamma_\rmm{-}u^\gamma_-} \Big)^{1/2}
\leq \ttC(\gamma,\ttv) \leq 
  \max\Big\{  \Big(\frac{\Phi_*D}{b_*{-}a_*} \Big)^{1/2} ,\, 
   \frac{\Phi_*\gamma \wh g_1|\ttv|}{\alpha_*(b_*{-}a_*)} \Big\}
   + \gamma \wh g_1|\ttv| 
\end{align}
\end{subequations}
with $\wh g_1=\int_0^\infty \tau \Gamma(\tau) \dd \tau >0$ from (H2). 
\end{theorem}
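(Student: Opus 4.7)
The strategy is to invoke the speed-comparison Proposition \ref{pr:CompSpeed} four times, once per inequality in \eqref{eq:SpeedEstim}. For each bound I construct an explicit monotone traveling sub- or supersolution $W(\xi)$ of the auxiliary equation \eqref{eq:Aux1} whose asymptotic limits at $\xi=\pm\infty$ are strictly ordered relative to $(u_-^\gamma,u_+^\gamma)$ as required by \eqref{eq:OrderLimits}, and whose propagation speed equals the announced bound. The case $\ttv\leq 0$ in \eqref{eq:SpeedEstim.vneg} is handled by the same constructions as \eqref{eq:SpeedEstim.vpos} after interchanging the roles of the lower and upper basins and of the quadruples $(\Phi_*,\alpha_*,a_*,b_*)$ and $(\Phi^*,\alpha^*,a^*,b^*)$, so I concentrate on \eqref{eq:SpeedEstim.vpos}.

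For the \emph{upper bound} $\ttC(\gamma,\ttv)\leq(\Phi_*D/(u_+^\gamma-u_\rmm^\gamma))^{1/2}$, I build a monotone subsolution profile $W_*(\xi)$ connecting a value strictly below $u_-^\gamma$ to a value strictly inside $(u_-^\gamma,u_+^\gamma)$. A convenient candidate is a smoothed piecewise profile comprising a constant lower tail, a quadratic or trigonometric arc of characteristic length $L=(D(u_+^\gamma-u_\rmm^\gamma)/\Phi_*)^{1/2}$, and a constant upper plateau. Since $W_*$ is nondecreasing and $\ttv\geq 0$, the nonlocal term obeys
\[
\gamma\int_0^\infty\Gamma(\tau)\,W_*(\xi{+}\ttv\tau)\dd\tau \;\geq\; \gamma\,W_*(\xi),
\]
so it can be dropped favorably, reducing the subsolution inequality to a purely local drift-diffusion balance. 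Using only $F_\gamma\geq -\Phi_*$ on the relevant interval (from \eqref{eq:Ass.Fgamma}), this balance is solvable with speed exactly $c_*=(\Phi_*D/(u_+^\gamma-u_\rmm^\gamma))^{1/2}$.

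For the \emph{lower bound} in \eqref{eq:SpeedEstim.vpos} I construct two monotone traveling supersolutions, one realizing each of the two candidates appearing in the max. The first uses the interval $[a^*,b^*]\subset[u_\rmm^\gamma,u_+^\gamma]$ on which $F_\gamma\leq \Phi^*$: a localized quadratic/trigonometric arc inside this interval yields a purely local advection--diffusion balance with speed $(\Phi^*D/(b^*-a^*))^{1/2}$. The second exploits the tilt estimate $F_\gamma(u)\leq \alpha^*(u{-}a^*)$ on $[b^*,a^*]$ by using an exponential profile, for which the Taylor-type bound $W^*(\xi{+}\ttv\tau)\leq W^*(\xi)+\ttv\tau\,\|W^*{}'\|_\infty$ integrated against $\tau\,\Gamma(\tau)$ produces a correction of size $\gamma\,\wh g_1\,\ttv\,\|W^*{}'\|_\infty$; balancing this against $\alpha^*(b^*{-}a^*)$ gives the nonlocal candidate $\Phi^*\gamma\wh g_1\ttv/(\alpha^*(b^*{-}a^*))$. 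The outer additive $-\gamma\wh g_1\ttv$ records the shift of the comoving frame that is needed to move the bulk of the integrand to the left-hand side as an effective advection when comparing the supersolution to $\calU_{\gamma,\ttv}$.

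The main obstacle is controlling the nonlocal integral in each sub-/supersolution inequality using only monotonicity, boundedness, and $\|W'\|_\infty$. For the upper bound, a one-sided monotonicity argument is enough and the resulting speed is purely local; for the lower bound, however, the nonlocal term contributes a genuine transport-like correction of order $\gamma\,\wh g_1\,\ttv$, and the max in \eqref{eq:SpeedEstim.vpos} reflects the interplay between the diffusion-dominated and the transport-dominated regimes. It is precisely here that the first-moment hypothesis $\wh g_1<\infty$ from (H2) becomes indispensable, and it is this place in the argument that demands the most care in choosing the supersolution ansatz.
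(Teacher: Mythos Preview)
Your overall strategy---build explicit monotone traveling sub- and supersolutions and invoke Proposition~\ref{pr:CompSpeed}---is exactly the paper's, and your treatment of the upper bound in \eqref{eq:SpeedEstim.vpos} (drop the nonlocal term via monotonicity when $\ttv\ge0$, reduce to a local balance with $F_\gamma\ge-\Phi_*$) is correct and matches the paper's argument. Two points, however, need repair.

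First, a notational slip: by \eqref{eq:Ass.Fgamma} the points $a^*,b^*$ lie in $[u_-^\gamma,u_\rmm^\gamma]$ with $b^*<a^*$, not in $[u_\rmm^\gamma,u_+^\gamma]$. The bound $F_\gamma\le\Phi^*$ lives on $[u_\rmm^\gamma,u_+^\gamma]$, while the tilt estimate $F_\gamma(u)\le\alpha^*(u{-}a^*)$ lives on $[b^*,a^*]$; your description swaps these.

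Second, and more substantively, your plan for the lower bound in \eqref{eq:SpeedEstim.vpos}---build \emph{two} supersolutions, one realizing each term in the $\max$---does not work. If you could produce supersolutions with speeds $c^*_1=-A-\gamma\wh g_1\ttv$ and $c^*_2=-B-\gamma\wh g_1\ttv$ (with $A=(\Phi^*D/(a^*{-}b^*))^{1/2}$, $B=\Phi^*\gamma\wh g_1\ttv/(\alpha^*(a^*{-}b^*))$), Proposition~\ref{pr:CompSpeed} would give $\ttc\ge\max\{c^*_1,c^*_2\}=-\min\{A,B\}-\gamma\wh g_1\ttv$, which is \emph{stronger} than the theorem's bound $-\max\{A,B\}-\gamma\wh g_1\ttv$. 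The reason you cannot achieve this is that the two candidates are not independently realizable: the paper uses a \emph{single} piecewise profile (constant tail, linear ramp of slope $m$, exponential approach to $b^*$), and the supersolution inequality imposes two \emph{coupled} constraints on $(c^*,m)$---one from the linear segment (involving $\Phi^*$), one from the exponential segment (involving $\alpha^*$). For $\ttv>0$ the unavoidable nonlocal correction $-\calL_{\gamma,\ttv}[W^*]\ge -\gamma\wh g_1\ttv\,m$ enters both constraints. Optimizing over the single free parameter $m$ then yields the $\max\{A,B\}$ structure, with the diffusive candidate $A$ winning when $\gamma\wh g_1\ttv$ is small and the transport candidate $B$ winning otherwise. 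The additive $-\gamma\wh g_1\ttv$ is not a frame shift but simply the residual $g_\ttv=\gamma\wh g_1\ttv$ that survives the optimization in both regimes. So replace the ``two supersolutions'' scheme by one parametrized profile and optimize.
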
 
\EEE
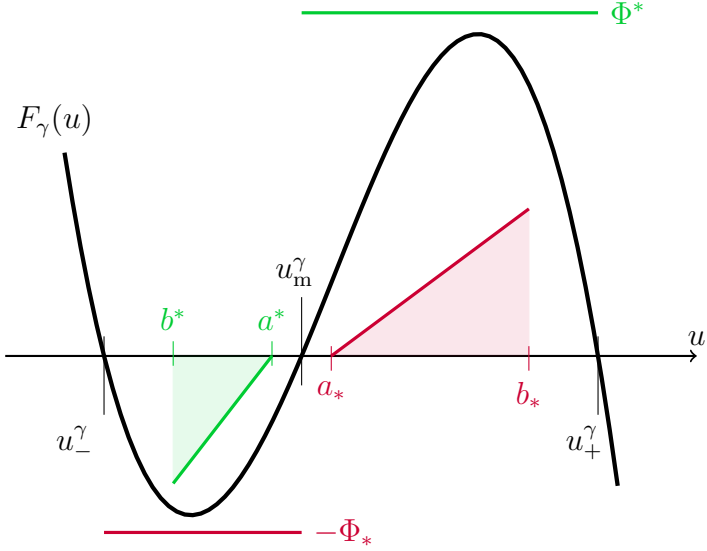
\begin{figure}
\begin{minipage}{0.65\textwidth}
\centering
\begin{tikzpicture}[scale=1.3] 
\draw[thick,->] (-1,0)-- (6,0) node[above]{$u$};
\draw[black, ultra thick, domain=-0.4:5.2, samples =70]
 plot(\x,{-\x*(\x-2)*(\x-5)*0.4});
\node[black] at (-0.5,2.4){$F_\gamma(u)$};
\draw[black] (0,0.2)--(0,-0.6) node[below left]{$u^\gamma_-$};
\draw[black] (2,-0.3)--(2,0.6) node[above]{$u^\gamma_\rmm\;\,$};
\draw[black] (5,0.2)--(5,-0.6) node[below left]{$u^\gamma_+$\!\!\!};
\draw[red!80!blue, very thick] (2.3,0)--(4.3,1.5);
\draw[red!80!blue, fill, opacity=0.1] (2.3,0)--(4.3,1.5)--(4.3,0)--cycle;
\draw[red!80!blue ] (2.3,0.1)--(2.3,-0.15) node [below] {$a_*$};
\draw[red!80!blue ] (4.3,0.1)--(4.3,-0.15) node [below] {$b_*$};
\draw[red!80!blue, very thick] (0,-1.8)--(2,-1.8) node[right]{$-\Phi_*$};
\draw[green!80!blue, very thick] (1.7,0)--(0.7,-1.3);
\draw[green!80!blue, fill, opacity=0.1] (1.7,0)--(0.7,-1.3)--(0.7,0)--cycle;
\draw[green!80!blue ] (1.7,-0.1)--(1.7,0.15) node [above] {$a^*$};
\draw[green!80!blue ] (0.7,-0.1)--(0.7,0.15) node [above] {$b^*$};
\draw[green!80!blue, very thick] (2,3.5)--(5,3.5) node[right]{$\Phi^*$};
\end{tikzpicture}
\end{minipage}
\hfill
\begin{minipage}{0.33\textwidth}
\caption{We show how $F_\gamma$ needs to be estimated from above and below to
  obtain the upper and lower bounds in Theorem \ref{thm:SpeedBounds} for the
  speed of traveling fronts $\calU_{\gamma,\ttv}$ for the nonlocal equation
  \eqref{eq:Aux1}. }
\label{fig:FgammaBounds}
\end{minipage}
\end{figure}
\AAA
\begin{proof} We construct suitable traveling fronts as subsolutions $W_*$ and
supersolutions $W^*$ with speeds $c_*$ and $c^*$, respectively. The comparison
principle for the travel speeds in Proposition \ref{pr:CompSpeed} gives the
desired result $c^*\leq \ttc \leq c_*$.

Choosing a small $\delta>0$ and a positive slope $m>0$, we set
$\xi_*:=(a_*{-}u^\gamma_-)/m>0$ and $\lambda_*: = m/(b_*{-}a_*)$ and define the function
$W_*$ in the specific form
\[
W_*(\xi)=\left\{ \ba{cl} 
  u^\gamma_- -\delta  m& \text{for } \xi\leq -\delta,\\  
  u^\gamma_- +m \xi & \text{for } \xi \in [-\delta,\xi_*], \\
  b_*-(b_*{-}a_*) \ee^{-\lambda_* (\xi-a_*/m)}&\text{for }\xi >\xi_*.   \ea \right.
\]
Actually $W_*$ could be smoothed out in $I_\delta:=[-\frac23\delta,-\frac13\delta]$ such
that it lies in $\rmC^2(\R)$ with  $W''_*(\xi)\geq 0$ for $\xi\in
I_\delta$. This smoothening would not destroy the property of a subsolution. 

A main observation is the monotonicity $W'_*(\xi)\geq 0$ and that $W_*$ and
$\calU_{\gamma,\ttv}$ satisfy the ordering conditions \eqref{eq:OrderLimits}
for the limits at $\xi=\pm \infty$.   
It remains to be shown that there is a speed $c_*$ such that
$u_*(t,x)=W_*(x{-}c_*t)$ is a subsolution. To obtain this, we proceed as
follows (using $\xi=x{-}c_*t$):
\begin{align*}
\dot u_*(\xi) -\calA_{\gamma,\ttv}[u_*](\xi)&= -c_*W'_*(\xi) -DW''_*(\xi)
 - F_\gamma\big( W_*(\xi)\big) - \calL_{\gamma,\ttv}[W_*](\xi) \\
\text{ with }\calL_{\gamma,\ttv}[w](\xi) &:=  \gamma\int_0^\infty \Gamma(\tau) 
 \big( w(\xi{+}\ttv \tau) - w(\xi)  \big) \dd \tau. 
\end{align*}
For $\ttv\geq 0$ we have $\calL_{\gamma,\ttv}[W_*]\geq 0$ because of the
monotonicity of $W_*$ and the nonnegativity of $\Gamma$. Hence, we can drop
$-\calL_{\gamma,\ttv}[W_*]$ when showing that $u_*$ is a subsolution. 

For $\ttv<0$ we estimate $0\leq -\calL_{\gamma,\ttv}[W_*](\xi)$ from above 
by considering two regimes for $\xi$ separately: For $\xi\leq -\delta$ we have
$\calL_{\gamma,\ttv}[W_*](\xi)=0$ since $W_*$ is constant on
${]{-}\infty,-\delta]}$. For $\xi > -\delta$ we use $W'_*(\xi)\in [0,m]$ and
obtain
\[
0\leq -\calL_{\gamma,\ttv}[W_*](\xi) \leq  -\gamma \int_0^\infty \Gamma(\tau)\,
\ttv \tau m \,\dd \tau = g_\ttv m \quad
\text{with } g_\ttv :=  \gamma \wh g_1\max\{-\ttv,0\}  .
\]

We derive the conditions that $m$ and $c_*$ have to satisfy in order to
guarantee that $W_*$ is indeed a subsolutions. For this we estimate $ \dot
u_*(\xi) -\calA_{\gamma,\ttv}[u_*](\xi)$ from above on the separate domains and collect
the corresponding conditions: 
\begin{align*}
\xi < -\delta:\quad &- c_*\cdot 0  - D\cdot 0  - 
 F_\gamma\big( u^\gamma_- {-}\delta m\big) - 0 \ \leq 0 ,
\\
\xi\in {]{-}\delta,\xi_*[}:\quad &- c_*\cdot m - D\cdot 0  + \qquad  \Phi_* \quad  +
m \,g_\ttv \leq 0 ,
\\
\xi> \xi_*: \quad & (b_*{-}a_*)\big( ({-}c_* \lambda_*+\lambda_*^2D + \alpha_*)
\ee^{-\lambda_*( \xi-\xi_*)}
 - \alpha_* \big)  + m\,g_\ttv \leq 0.   
\end{align*}
The first condition is always true because $F_\gamma(u)>0$ for
$u < u^\gamma_-$. For the second term we simply choose
$c_*\geq \Phi_*/m + g_\ttv$, where $m$ is still to be
determined.  Hence, it remains to analyze the third condition. For
$\ttv \geq 0$, the last term vanishes and the terms multiplying $\alpha_*$ are
nonpositive. Hence, it suffices to take $c_* \geq \lambda_* D=
m D/(b_*{-}a_*)$. Together with $ c_*\geq \Phi_*/m$ we can choose $m$ optimally
and find that $c_*= \big( \Phi_*D/(b_*{-}a_*)\big)^{1/2}$ guarantees that $W_*$
is a subsolution. Surprisingly, the result does not depend on the slope
$\alpha_*>0$, hence we may optimize $a_*$ and $b_*$ by pushing them to their
limits $u^\gamma_\rmm$ and $u^\gamma_+$, respectively. Thus, we obtain the
upper estimate for $c$ in \eqref{eq:SpeedEstim.vpos}.

For $\ttv<0$ the term involving $\ttvminus>0$ can only be compensated by
$\alpha_*(b_*{-}a_*)$. Setting $m= \theta \alpha_*(b_*{-}a_*)/g_\ttv$ we can
rewrite the third condition in the form 
\[ \ts
\alpha_*(b_*{-}a_*)\Big( \big(-c_*\frac{\theta}{g_\ttv}
+\frac{\alpha_*\theta^2D} {g_\ttv^2}
+1\big)\,\ee^{-\lambda_*( \xi-\xi_*)} - 1 + \theta\Big) \leq 0.
\]
Together with the second condition it remains to
satisfy 
\[
c_* \geq \max\big\{ \tfrac1\theta\,\tfrac{\Phi_* g_\ttv}{\alpha_*(b_*{-} a_*)} +
g_\ttv \, , \, \theta \,\tfrac{\alpha_* D}{g_\ttv} + g_\ttv \big\} .
\]
For $g_\ttv^2 \Phi_* < \alpha_*^2 (b_*{-}a_*) D$ we find an optimal
$\theta\in {]0,1[}$, whereas otherwise the first term in the maximum dominates
and $\theta=1$ gives the smallest bound for $c_*$.  This establishes the upper
estimate in \eqref{eq:SpeedEstim.vneg}.

To obtain the lower estimates, we construct a supersolution in a completely
analogous fashion. We emphasize that now we need to estimate
$-\calL_{\gamma,\ttv}[W^*] $ from below. Since we still have $(W^*)'(\xi)>0$, we
find $-\calL_{\gamma,\ttv}[W^*] (\xi) \geq 0$ for $\ttv\leq 0$, which is now
the easy case leading to the simple lower bound in \eqref{eq:SpeedEstim.vneg}. 
For $\ttv>0$ we then use $-\calL_{\gamma,\ttv}[W^*](\xi)\geq -
\ttv \gamma\wh g_1 \, m$ and obtain the lower bound in \eqref{eq:SpeedEstim.vpos}.  
\end{proof} 
\EEE

\begin{lemma}
\label{lem:continuity}
\AAA Let the Hypotheses (H1) and (H2) hold. Then, \EEE the function
$\ttC : [0,\gamma_*] \ti \bbR \to \bbR$ is continuous.
\end{lemma}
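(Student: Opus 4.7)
The plan is to argue by sequential continuity: for any sequence $(\gamma_n,\ttv_n)\to(\gamma_0,\ttv_0)$ in $[0,\gamma_*]\ti\R$, set $c_n:=\ttC(\gamma_n,\ttv_n)$ and $\calU_n:=\calU_{\gamma_n,\ttv_n}$, with the aim of showing $c_n\to c_0:=\ttC(\gamma_0,\ttv_0)$. The idea is to extract a subsequential limit profile $\calU_*$ with limiting speed $c_*$ and identify it, via the uniqueness in Proposition~\ref{pr:ExiFrontAux}, with the bistable front $\calU_{\gamma_0,\ttv_0}$; since every convergent subsequence then shares the limit $c_0$, the full sequence must converge.

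For the compactness step I would first invoke Theorem~\ref{thm:SpeedBounds}. The roots $u^\gamma_\pm,u^\gamma_\rmm$ and the auxiliary constants in \eqref{eq:Ass.Fgamma} may be chosen continuously in $\gamma$ by (H1) and the implicit function theorem, so the explicit bounds there give $|c_n|\leq M$ on a neighbourhood of $(\gamma_0,\ttv_0)$. Using the translation invariance of \eqref{eq:Aux1}, I normalize $\calU_n(0)=v_*$, where $v_*$ is a fixed value strictly between $u^{\gamma_0}_-$ and $u^{\gamma_0}_\rmm$; this is feasible for all large $n$ by monotonicity of $\calU_n$ together with the continuous dependence of the roots on $\gamma$. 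The profiles then take values in a fixed compact interval, and rewriting \eqref{eq:TravWaveODE} as $D\calU_n''+c_n\calU_n'=h_n$ with $h_n$ uniformly bounded on $\R$, standard linear-ODE estimates yield uniform $\rmC^2(\R)$ bounds on $\calU_n$. Extracting a subsequence along which $c_n\to c_*\in[-M,M]$, an Arzel\`a--Ascoli / diagonal argument produces $\calU_n\to\calU_*$ in $\rmC^2_{\mathrm{loc}}(\R)$ with $\calU_*$ monotone nondecreasing. Dominated convergence --- justified by the integrability of $\Gamma$ from (H2) and the uniform boundedness of $\calU_n$ --- passes to the limit in the nonlocal integral, so $\calU_*$ satisfies \eqref{eq:TravWaveODE} at $(\gamma_0,\ttv_0)$ with speed $c_*$ and $\calU_*(0)=v_*$.

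The decisive step, and the main obstacle, is to verify that $\calU_*$ is a genuine bistable front, i.e.\ $\calU_*(\pm\infty)=u^{\gamma_0}_\pm$, for then the uniqueness in Proposition~\ref{pr:ExiFrontAux} forces $c_*=c_0$. Monotonicity and boundedness guarantee the existence of $\calU_*(\pm\infty)$, and passing to the limit in \eqref{eq:TravWaveODE} as $\xi\to\pm\infty$ shows they are roots of $F_{\gamma_0}$, hence belong to $\{u^{\gamma_0}_-,u^{\gamma_0}_\rmm,u^{\gamma_0}_+\}$. The normalization $\calU_*(0)=v_*\in(u^{\gamma_0}_-,u^{\gamma_0}_\rmm)$ rules out constant limits and forces $\calU_*(-\infty)=u^{\gamma_0}_-$, leaving only the spurious alternative $\calU_*(+\infty)=u^{\gamma_0}_\rmm$. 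This semi-front case must be excluded via the instability encoded in $F_{\gamma_0}'(u^{\gamma_0}_\rmm)>0$: linearizing \eqref{eq:TravWaveODE} at $u^{\gamma_0}_\rmm$ leads to the characteristic function $\chi(\lambda)=D\lambda^2+c_*\lambda+F_{\gamma_0}'(u^{\gamma_0}_\rmm)+\gamma_0\!\int_0^\infty\!\Gamma(\tau)(\ee^{\lambda\ttv_0\tau}{-}1)\dd \tau$, and a monotone approach $\calU_*(\xi)\to u^{\gamma_0}_\rmm$ would require a real root $\lambda<0$, which a phase-plane analysis --- using the sign of $\chi(0)=F_{\gamma_0}'(u^{\gamma_0}_\rmm)>0$ together with the bounds on $c_*$ inherited from Theorem~\ref{thm:SpeedBounds} --- precludes in the regime dictated by (H1)--(H2). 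Once the spurious case is ruled out, $\calU_*$ coincides (up to the translation fixed by the normalization) with $\calU_{\gamma_0,\ttv_0}$, hence $c_*=c_0$, and the continuity of $\ttC$ follows.
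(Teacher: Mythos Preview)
Your overall strategy---sequential continuity, uniform speed bounds via Theorem~\ref{thm:SpeedBounds}, Arzel\`a--Ascoli extraction, and identification of the limit via the uniqueness in Proposition~\ref{pr:ExiFrontAux}---matches the paper's. The decisive difference lies in how you exclude the degenerate limit $\calU_*(+\infty)=u^{\gamma_0}_\rmm$, and here your argument has a genuine gap.

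You assert that a monotone approach to $u^{\gamma_0}_\rmm$ would force a negative real root of $\chi$, and that the speed bounds together with $\chi(0)>0$ preclude this. But they do not: already in the local case $\ttv_0=0$, where $\chi(\lambda)=D\lambda^2+c_*\lambda+F'_{\gamma_0}(u^{\gamma_0}_\rmm)$, two negative real roots exist whenever $c_*\geq 2\sqrt{D F'_{\gamma_0}(u^{\gamma_0}_\rmm)}$. Nothing in (H1)--(H2) or in the bounds of Theorem~\ref{thm:SpeedBounds} forces $c_*$ below that threshold; one may take $F'_{\gamma_0}(u^{\gamma_0}_\rmm)$ small while $\Phi_*$ remains moderate. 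Monotone semi-fronts from $u^{\gamma_0}_-$ to $u^{\gamma_0}_\rmm$ genuinely exist for such speeds, so a linearization argument at $u^{\gamma_0}_\rmm$ cannot rule them out. For $\ttv_0\neq 0$ you face the additional, nontrivial task of rigorously linking the asymptotics of $\calU_*$ to roots of the transcendental equation $\chi(\lambda)=0$.

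The paper avoids this by working on the approximating sequence rather than on the limit. It normalizes $\calU_n(0)=u^{\gamma_n}_\rmm$ and establishes a \emph{uniform} lower bound $\calU_n'(0)\geq\delta>0$: on $(-\infty,0]$ the nonlocal term is estimated via monotonicity, yielding a differential inequality of the form $D\calU_n''\geq -(|\ttc|{+}1)\calU_n'+ f_*\,(\calU_n{-}u^{\gamma_n}_-)(u^{\gamma_n}_\rmm{-}\calU_n)/(u^{\gamma_n}_\rmm{-}u^{\gamma_n}_-)$, and the curve $\xi\mapsto(\calU_n,\calU_n')$ is then compared in the phase plane with the unstable manifold of the associated autonomous ODE, whose crossing of $U=u^{\gamma_n}_\rmm$ has slope bounded below by an explicit $\delta$ independent of $n$. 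Passing to the limit gives $\calU'(0)\geq\delta>0$, so $\calU$ strictly increases through the middle root and must connect $u^{\gamma_0}_-$ to $u^{\gamma_0}_+$. The normalization at the middle root (rather than at your $v_*<u^{\gamma_0}_\rmm$) is precisely what makes a single derivative bound settle both endpoints at once.
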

\begin{proof}
  Consider a sequence $(\gamma_n, \ttv_n)_{n\in\bbN}$ with
  $(\gamma_n, \ttv_n) \to (\gamma,\ttv)$ as $n\to\infty$. According to
  Proposition \ref{pr:ExiFrontAux}, for all $n \in \bbN$ there exists a unique
  traveling front $u_n(t,x) = \calU_n(x {-} \ttc_n t)$ for \eqref{eq:Aux1} with
  $\ttc_n = \ttC(\gamma_n, \ttv_n)$.
% Without loss of generality, let $\xi_n = 0$ in \eqref{eq:shift-decay}.

  \emph{Step 1: Uniform bounds for the sequence $(\ttc_n,\calU_n)$.} Since
  $(\calU_n)_{n \in \bbN}$ is a traveling front, we have
  $ u^{\gamma_n}_-  < \calU_n(\xi) < u^{\gamma_n}_+ $. \AAA By (H1) and
  the form $F_\gamma(u)=F(u)+\gamma u$, the mappings $\gamma \mapsto
  u^\gamma_\pm$ are uniformly continuous and hence bounded, i.e.\ there exists
  $U_*>0$ such that $-U_* \leq  u^\gamma_-< u^\gamma_+ \leq U_*$ for all $\gamma
  \in [0,\gamma_*]$. Hence, 
$\| \calU_n\|_{\rmL^\infty} \leq U_*$ and there exists $R_*>0$ such that
$|F(u)|\leq R_* $ for all $u\in [-U_*,U_*]$. 

Moreover, we can apply the speed bounds for $\ttc_n=\ttC(\gamma_n,\ttv_n)$ from
Theorem \ref{thm:SpeedBounds} to show that $|\ttc_n|\leq C_0$. Indeed, in a
neighborhood of the limit $(\gamma,\ttv)$ we can choose the estimating
quantities $\Phi_*$, $a_*$, $b_*$, $\Phi^*$, $a^*$, and $b^*$ uniform for 
$(\gamma_n,\ttv_n)$ for all $n\geq n_0$. 

Next we show that $\calU_n$ is also bounded in $\rmH^1_\text{loc}(\R)$. For
this we use equation \eqref{eq:TravWaveODE} in the form 
\begin{equation}
  \label{eq:TF.hn}
  -\ttc_n \calU'_n=D\calU''_n + h_n \quad \text{ with } h_n(\xi)=F(\calU_n(\xi))+
\gamma_n \int_0^\infty \Gamma(\tau)\calU_n(\xi{+}\ttv_n \tau) \dd \tau,
\end{equation}
where now $\| h_n\|_{\rmL^\infty} \leq H_*$ for a suitable constant $H_*$.
Moreover, we define the function $\psi_0 \in \rmC^1(\R)$ with
$\psi_0(x)=(x^2{-}1)^2$ for $|x|\leq 1$ and $\psi_0(x)=0$ otherwise, which
satisfies the estimate $\psi'_0(x)^2 \leq 16 \psi_0(x)$. For arbitrary
$\zeta_* \in \R$ we define the test function
$\psi: \xi \mapsto \psi_0(\xi{-}\zeta_*)$ and test equation \eqref{eq:TF.hn}
with $\psi \calU_n$. Setting $\Xi=[\zeta_*{-}1,\zeta_*{+}1]$ this leads to the
estimate
\begin{align*}
&\Delta_{n,\zeta_*}:= \int_\Xi D|\calU'_n|^2\psi \dd \xi 
  = -\int_\Xi D(\calU''_n\calU_n\psi + \calU_n \calU'_n \psi') \dd \xi \\
& \overset{\text{\eqref{eq:TF.hn}}}= \int_\Xi\big((h_n {+}
     \ttc_n \calU'_n)\calU_n \psi  -  D \calU_n \calU'_n\psi') \dd \xi
\\
&\leq  H_*U_*\|\psi\|_{\rmL^1(\Xi)} \! + \big\|  (D\psi)^{1/2}\calU'_n\big\|_{\rmL^21(\Xi)}
 \big\| \ttc_n (\tfrac\psi D)^{1/2}\calU_n 
       -  (\tfrac D\psi)^{1/2}\psi'\calU_n\big\|_{\rmL^21(\Xi)}
\\
&\leq  \tfrac{16}{15} H_*U_* + \Delta_{n,\zeta_*}^{1/2} \Big(
\big(\tfrac{16}{15 D}\big)^{1/2}C_0U_* + 8D^{1/2} U_* \Big). 
\end{align*}
Since $\zeta_*$ was arbitrary and $D\psi(x)\geq D/2$ for $|x|\leq 1/2$, we find 
the uniform estimate 
\[
\sup_{\zeta_*\in \R} \int_{\zeta_*{-}1/2}^{\zeta_*+1/2} |\calU'_n(\xi)|^2  \dd \xi
\leq \tfrac{64}{15D} H_*U_* + \tfrac{8}D \big( \tfrac{16}{15D} C_0^2 U_*^2 + 64
DU_*^2\big) , 
\]
which together with $\|\calU_n\|_{\rmL^\infty}\leq U_*$ is the desired uniform
bound in $\rmH^1_\text{loc}(\R)$. 

Inserting the last result into \eqref{eq:TF.hn}, we first obtain a uniform bound for 
$\calU_n$ in $\rmH^2_\text{loc}(\R)$, and inserting again we find
$\|\calU''_n\|_{\rmL^\infty} \leq C_2 <\infty$ for all $n\in \N$. 
\EEE

\emph{Step 2: Convergent subsequences and passage to the limit.} We can extract
a subsequence (not relabeled) such that $\ttc_n \to \ttc$ and
$\calU_n \rightharpoonup \calU$ weakly in $\rmH^1_\mathrm{loc}(\bbR)$. \AAA We
also fix the translations in such a way that $\calU_n(0)=u^{\gamma_n}_\rmm \to
u^{\gamma}_\rmm $. Weak convergence in $\rmH^1_\text{loc}(\R)$ implies uniform
convergence on any compact interval $K\Subset \R$. Hence, we have $\calU_n(\xi)
\to \calU(\xi)$ for all $\xi  \in \R$ and $\calU(0)=u^{\gamma}_\rmm$ in
particular. Of course, the uniform bounds for $\calU_n$ imply $\calU\in
\rmC^{1,\text{Lip}}(\R)$ with $\| \calU''\|_{\rmL^\infty} \leq C_2$. Moreover,
  since all $\calU_n$ are nondecreasing, we have $\calU'(\xi)\geq 0$ as well. 
\EEE
   
Testing the equation \eqref{eq:TravWaveODE} with
$\Phi \in \rmC^\infty_\mathrm{c}(\R)$ and integrating over the arbitrarily
chosen compact subset $K \Subset \bbR$ with $\mafo{sppt}\Phi \subset K$ yields
\begin{align*}
  \int_K -\ttc_n \calU_n' \Phi \dd\xi = \int_K  \bigg({-} D\calU_n' \Phi' + \Big(
    F(\calU_n) + \gamma_n\int_0^\infty\!  \Gamma(\tau) 
    \calU_n(\cdot {+}\ttv_n \tau) \dd\tau \Big)  \Phi \bigg) \dd\xi . 
\end{align*}
Exploiting the continuity of $F$ and the locally uniform convergence
$\calU_n \to \calU$, the limit $n\to\infty$ leads to 
\begin{align*}
  \int_K -\ttc \, \calU' \Phi \dd\xi = \int_K \bigg(-D\calU' \Phi' + \Big(
    F(\calU) + \gamma \int_0^\infty \Gamma(\tau)  \calU(\cdot
    {+} \ttv  \tau) \dd\tau \Big) \Phi \dd\xi . 
\end{align*}
Thus, the pair $(\ttc,\calU)$ is a solution of \eqref{eq:TravWaveODE},
which satisfies $\calU(0)=u^\gamma_\rmm$. 

\emph{Step 3: Nontriviality of $\calU$.} We still need to show that $\calU$ is
not equal to the constant solution $u^\gamma_\rmm$. Since we already know that
$\calU$ is monotone, the limits $U_\pm=\lim_{\xi\to \pm \infty}\calU(\xi)$
exist. It is easy to see that these limits satisfy $F_\gamma(U_\pm)=0$. Hence,
it suffices to show $U_- \lneqq u^\gamma_\rmm$ and $U_\gamma \gneqq u^\gamma_\rmm$. For
this, it is sufficient, to find a $\delta>0$ such that $\calU'_n(0)\geq
\delta$, which implies $\calU'(0)\geq \delta >0$. 

To show this, we consider the case $\ttv_n\leq 0$ and estimate $\calU_n$ from
above on ${]{-}\infty,0]}$,  while the case $\ttv_n\geq 0$ is treated
analogously by estimating $\calU_n$ from below on ${[0,\infty[}$. 
Because of $\gamma_n\to \gamma$, we find $f_*>0$ 
such that $F_{\gamma_n}(u) \leq -\frac{f_*}{u^{\gamma_n}_\rmm{-} u^{\gamma_n}_-} 
(u{-}u^{\gamma_n}_-)(u^{\gamma_n}_\rmm-u)\leq 0$ holds for all $u\in [u^{\gamma_n}_-,
u^{\gamma_n}] $ and  $n \in\N$. Hence, using $\calU'_n\geq 0$ and $\ttv_n\leq 0$ 
we obtain 
\begin{align}\nonumber
D\calU''_n&= - \ttc_n \calU'_n -F(\calU'_n) - \gamma_n \int_0^\infty
\Gamma(\tau)\calU_n(\xi{+}\ttv_n \tau) \dd \tau 
\\
\label{eq:PhasePlaneCompare}
&\geq -|\ttc_n|\calU'_n -F_{\gamma_n}(\calU_n) \geq 
-(|\ttc|{+}1)\calU'_n + \frac{f_*}{u^{\gamma_n}_\rmm{-} u^{\gamma_n}_-}
 (\calU_n{-}u^{\gamma_n}_-)\, (u^{\gamma_n}_\rmm{-} \calU_n). 
\end{align} 
With this, we can compare the curve
$\calC_n:\xi\mapsto (\calU_n(\xi),\calU'_n(\xi))$ in the phase plane for
$(U,U')$ with the curve $\wt C_n$ generated by the solution $\wt U_n$ of the
ODE  $ D U'' = -(|\ttc|{+}1)U' +  \frac{f_*}{u^{\gamma_n}_\rmm{-} u^{\gamma_n}_-}
(U{-}u^{\gamma_n}_-)\, (u^{\gamma_n}_\rmm{-} U)$, satisfying $\wt U'_n\geq 0$
and $(\wt U_n(\xi),\wt U'_n(\xi)) \to (u^{\gamma_n}_-,0)$ for $\xi\to -\infty$.

The function $\calU_n$ has the expansion 
$\calU_n(\xi)=u^\gamma_- + d_n\ee^{\lambda_n\xi}(1{+}o(1))$ for $\xi\to
-\infty$ with $d_n>0$, 
where $\lambda_n$ is the positive root of the characteristic equation
$D\lambda^2 +\ttc_n \lambda +F'(u^{\gamma_n}_-)=0$. Hence, the former curve $\calC_n$
leaves the point $(u^{\gamma_n}_-,0)$ to the right with positive slope
  $\lambda_n>0$, i.e.\ $U' =\lambda_n(U{-}u^{\gamma_n}_-)+$h.o.t. 
Similarly, the curve $\wt C_n$ generated by the solution $\wt U_n$ has 
the expansion $U' =\wt \lambda_n(U{-}u^{\gamma_n}_-)+$h.o.t., where
$\wt\lambda_n$ is the positive root of $D\lambda^2 -(|\ttc|{+}1) \lambda -f_*
=0$. Because of $\ttc_n\geq -(|\ttc|{+}1)$ and $F'(u^{\gamma_n}_-)\leq -f_* <0$ we have
$\lambda_n> \wt\lambda_n>0$, which implies that the curve $\calC_n$ lies above
$\wt C_n$ in a neighborhood of $ (u^{\gamma_n}_-,0)$. 

Because of the comparison \eqref{eq:PhasePlaneCompare} we know that $\calC_n$
must stay above $\wt C_n$ until both curves hit the line $U=u^{\gamma_n}_\rmm$.
Thus, choosing $\wt\xi_n$ such that $\wt U_n(\wt\xi_n) = u^{\gamma_n}_\rmm = 
\calU_n(0)$ and $\wt U_n(\xi)< u^{\gamma_n}_\rmm$ for $\xi \leq \wt\xi_n$, we obtain
$\calU'_n(0)\geq \wt U'_n(\xi_n)$, see also 
Figure \ref{fig:PhasePlane}. By simple scaling we see that $\wt U_n$ has the form
\begin{align*}
&\wt U_n(\xi)= u^{\gamma_n}_- + \big(u^{\gamma_n}_\rmm {-}
u^{\gamma_n}_-\big)\,Y(\kappa (\xi{-}\xi_n)) \ \text{ where } Y''+Y'- \sigma Y(1{-}Y)=0
, \\
&\kappa=\frac{|\ttc|{+}1}D, \quad \sigma=
\frac{f_* \,D}{(|\ttc|{+}1)^2}, \quad
Y(-\infty)=0,\quad Y(0)=1, \quad Y'(\xi)>0. 
\end{align*}
With this, we arrive at the desired result
\[
  \calU'_n(0) \geq \wt U'_n(\xi_n)= \big(u^{\gamma_n}_\rmm {-}
u^{\gamma_n}_-\big)\, \kappa\, Y'(0) \geq \frac12 \big(u^{\gamma}_\rmm {-}
u^{\gamma}_-\big)\,\kappa\, Y'(0) =:\delta >0.
\]

In summary we have shown that the limit pair $(\ttc,\calU)$ satisfies the ODE
\eqref{eq:TravWaveODE} as well as the boundary conditions
\eqref{eq:TravWave.BC}. Hence, by the uniqueness in Proposition
\ref{pr:ExiFrontAux} we conclude $\ttc = \ttC(\gamma,\ttv)$, and the desired
continuity of $\ttC$ follows from $(\gamma_n,\ttv_n)\to (\gamma,\ttv)$ and
$\ttc_n = \ttC(\gamma_n,\ttv_n) \to \ttc = \ttC(\gamma,\ttv)$. Indeed, the
convergence along the chosen subsequence converts into convergence of the full
sequence, because the limit of any convergence subsequence of
$(\ttc_n,\calU_n)$ is uniquely determined. 
\end{proof}

\begin{figure}
\begin{tikzpicture}
%\fill[color=gray] (0,0) rectangle (8,4);
\node at (1.8,-0.4) {$u^{\gamma_n}_-$};
\node at (5.6,-0.4) {$u^{\gamma_n}_\rmm$};
\draw[->] (0,1)--(8.1,1) node[right] {$U$};
\draw[->] (0.1,-0.4)--(0.1,4.2) node[right] {$U'$};
\node[red, above] at (4.3,3.7) {$\calC_n$};
\node at (4,2) {\includegraphics[width=0.5\textwidth]{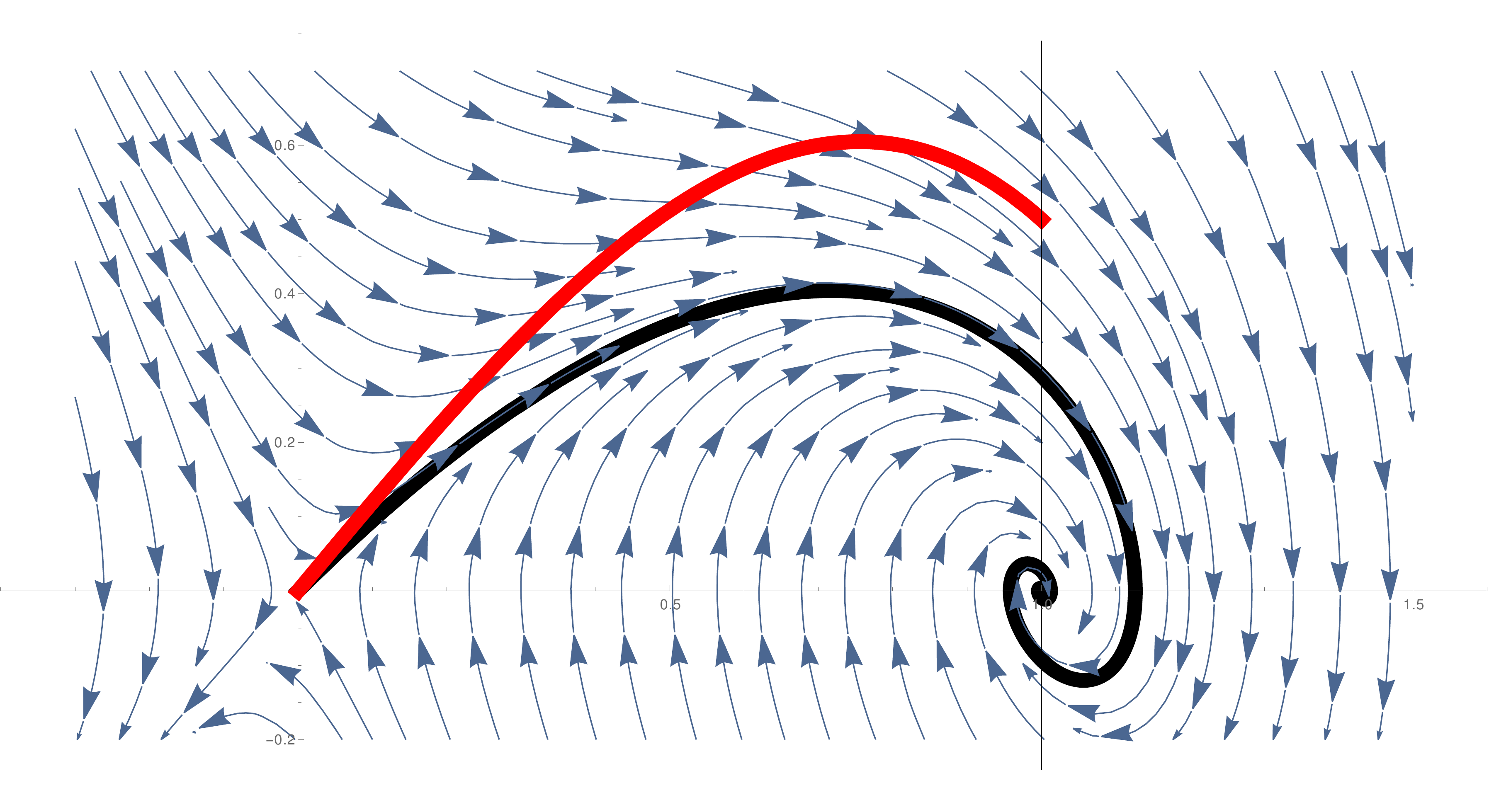}};
\end{tikzpicture}
\hfill
\begin{minipage}[b]{0.42\textwidth}
\caption{Phase plane for $(U,U')$ displaying the stream plot of the ODE 
$U''+\ttc \,U'+ f_* (U{-}u^{\gamma_n}_-) ( 
u^{\gamma_n}_\rmm {-}U)=0$ together with the {\color{red}(red)}
curve\newline
$\calC_n: \xi \mapsto (\calU_n(\xi),\calU'_n(\xi))$ \newline
lying above the unstable manifold of $(u^{\gamma_n}_-,0)$ 
(black curve).}
\label{fig:PhasePlane}  
\end{minipage}
\end{figure}

\subsection{Traveling fronts for the memory equation}
\label{su:Results}

According to Proposition \ref{pr:TimeSpace}, we obtain a traveling front for
the memory equation \eqref{eq:NaguMemo} by finding $\ttv \in \R$ such that
$\ttv =\ttC(\gamma,\ttv )$. To guarantee existence of such solutions, we derive
suitable bounds and monotonicity properties for the wave speed function $\ttC$,
where again the comparison principle for sub- and supersolutions is useful, see
Propositions \ref{pr:comparison} and \ref{pr:CompSpeed}. \AAA We recall our
choice that all traveling fronts $\calU_{\gamma,\ttv}$ satisfy
$\calU_{\gamma,\ttv}(-\infty)=u^\gamma_- \lneqq u^\gamma_+ = 
\calU_{\gamma,\ttv}(\infty)$.\EEE

\begin{lemma}[Monotonicity of $\ttC$] 
\label{lem:compare_beta-v} 
For all $\gamma, \wt\gamma \in [0,\gamma_*]$ and all $\ttv,\wt\ttv  \in \bbR$, we
have the following implications: 
\begin{align}
\label{eq:C.diff.v}
& (a) \qquad\qquad \ttv \leq \wt\ttv   & \hspace*{-5.5em} \Longrightarrow \quad
\ttC(\gamma,\ttv) &\geq \ttC(\gamma,\wt\ttv) ;
\\
  \label{eq:C.diff.beta}
  & (b) \ \gamma \leq \wt\gamma \text{ and } 
 \max\{ u^\gamma_-,u^{\wt\gamma}_-\}\geq 0  & \hspace*{-5.5em} \Longrightarrow
  \quad \ttC(\gamma,\ttv )&\geq  \ttC(\wt\gamma , \ttv ) .
\end{align}
\end{lemma}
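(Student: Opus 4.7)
\emph{Strategy.} Both implications are proved by exhibiting one auxiliary traveling front as a sub- or supersolution of the other equation, and then turning that comparison into an inequality of wave speeds. The conversion will be done either by Proposition \ref{pr:CompSpeed} (when the two pairs of asymptotic limits at $\pm\infty$ can be strictly ordered) or by combining the exponential shift-stability in Proposition \ref{pr:ExiFrontAux} with the comparison principle in Proposition \ref{pr:comparison} (when the limits coincide and Proposition \ref{pr:CompSpeed} does not apply).

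\emph{Part (a).} Write $c:=\ttC(\gamma,\ttv)$ and $\wt c:=\ttC(\gamma,\wt\ttv)$, and set $w(t,x):=\calU_{\gamma,\wt\ttv}(x{-}\wt c\,t)$. Since $\dot w=\calA_{\gamma,\wt\ttv}[w]$ and $\calU'_{\gamma,\wt\ttv}\geq 0$, the assumption $\wt\ttv\geq\ttv$ together with $\tau\geq 0$ and $\Gamma\geq 0$ yields
\[
\dot w-\calA_{\gamma,\ttv}[w]=\gamma\int_0^\infty\!\Gamma(\tau)\bigl[\calU_{\gamma,\wt\ttv}(x{-}\wt c t{+}\wt\ttv\tau)-\calU_{\gamma,\wt\ttv}(x{-}\wt c t{+}\ttv\tau)\bigr]\dd\tau\geq 0 ,
\]
so $w$ is a supersolution of the $(\gamma,\ttv)$-equation. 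Because $w$ and the reference front $\calU_{\gamma,\ttv}$ share the same limits $u^\gamma_\pm$, Proposition \ref{pr:CompSpeed} does not apply; instead I would solve the Cauchy problem for the $(\gamma,\ttv)$-equation with initial datum $\calU_{\gamma,\wt\ttv}(\cdot)$, which fulfils \eqref{eq:initial-value}. Proposition \ref{pr:ExiFrontAux} then produces a shift $\xi$ and constants $K,\kappa$ with $\| u(t,\cdot){-}\calU_{\gamma,\ttv}(\cdot{-}c t{+}\xi)\|_{\rmL^\infty}\leq K\ee^{-\kappa t}$, while the (weak) comparison principle from Proposition \ref{pr:comparison} gives $w\geq u$ for all $t\geq 0$. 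Evaluating at $x=\wt c\,t$ and letting $t\to\infty$, the assumption $\wt c>c$ would force $\calU_{\gamma,\wt\ttv}(0)\geq u^\gamma_+$, contradicting the strict monotonicity of $\calU_{\gamma,\wt\ttv}$. Hence $c\geq\wt c$.

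\emph{Part (b).} Assume first $u^{\wt\gamma}_-\geq 0$ (the case $u^\gamma_-\geq 0$ is handled symmetrically by swapping the roles of the two equations). Then $w(t,x):=\calU_{\wt\gamma,\ttv}(x{-}\wt c\,t)\geq u^{\wt\gamma}_-\geq 0$, and a computation analogous to (a) gives
\[
\dot w-\calA_{\gamma,\ttv}[w]=(\wt\gamma-\gamma)\int_0^\infty\!\Gamma(\tau)\, w(t,x{+}\ttv\tau)\dd\tau\geq 0 ,
\]
making $w$ a supersolution of the $(\gamma,\ttv)$-equation, whose limits $u^{\wt\gamma}_\pm$ now genuinely differ from the limits $u^\gamma_\pm$ of the reference front $\calU_{\gamma,\ttv}$. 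This is precisely the setting of Proposition \ref{pr:CompSpeed}, which will deliver $c\geq\wt c$ once the strict ordering $u^\gamma_-<u^{\wt\gamma}_-<u^\gamma_+<u^{\wt\gamma}_+$ is established. I would obtain it from the implicit-function identity $\tfrac{\dd}{\dd\gamma}u^\gamma_\pm=-u^\gamma_\pm/F_\gamma'(u^\gamma_\pm)$: the signs imposed by the hypothesis, together with $F_\gamma'(u^\gamma_\pm)<0$ from (H1), yield the correct monotonicity of the outer roots, while (H1) guarantees that the three branches $u^\gamma_-<u^\gamma_\rmm<u^\gamma_+$ remain well-separated for all $\gamma\in[0,\gamma_*]$.

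\emph{Expected obstacle.} The hardest point is the verification of the strict ordering of the four limits required by Proposition \ref{pr:CompSpeed} in part (b); the sign hypothesis $\max\{u^\gamma_-,u^{\wt\gamma}_-\}\geq 0$ is tailored exactly so that the implicit-function monotonicity of the outer roots points in the correct direction. Part (a) needs no such analysis because it involves the roots of a single equation only.
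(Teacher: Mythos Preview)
Your strategy—insert one traveling front into the other auxiliary equation as a sub- or supersolution and read off a speed inequality—is exactly the paper's. In part (b) the two proofs are essentially identical: the paper also invokes Proposition~\ref{pr:CompSpeed} without explicitly verifying the strict ordering of the four limits that you flag as the ``expected obstacle''; so your implicit-function argument is extra care, not a different route. (For the middle inequality $u^{\wt\gamma}_-<u^\gamma_+$ the IFT monotonicity of the outer roots alone is not quite enough; a quick way to get it is to note that $F_\gamma(u^{\wt\gamma}_-)=(\gamma{-}\wt\gamma)\,u^{\wt\gamma}_-\leq 0$ forces $u^{\wt\gamma}_-\in[u^\gamma_-,u^\gamma_\rmm]$ by the sign pattern of the bistable $F_\gamma$.)

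The one place you genuinely diverge is part (a). The paper again cites Proposition~\ref{pr:CompSpeed}, but you correctly observe that its hypotheses fail there, since both fronts share the limits $u^\gamma_\pm$ and the strict inequalities in \eqref{eq:OrderLimits} cannot hold. Your workaround—solving the $(\gamma,\ttv)$-Cauchy problem from the initial datum $\calU_{\gamma,\wt\ttv}$, sandwiching it between the supersolution $w$ (via the weak comparison principle, i.e.\ Step~1 of the proof of Proposition~\ref{pr:comparison}, since $w$ and $u$ coincide identically at $t=0$) and the shifted front $\calU_{\gamma,\ttv}(\cdot-ct+\xi)$ (via the exponential stability \eqref{eq:shift-decay})—is a legitimate and arguably more rigorous way to close this gap than the paper's bare citation. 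The contradiction you obtain at $x=\wt c\,t$ as $t\to\infty$ is correct.
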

\begin{proof} \AAA Throughout the proof we set
  $u_{\gamma,\ttv}(t,x)=\calU \big(x{-}\ttC(\gamma, \ttv)t\big)$ and insert one
  traveling front into the parabolic equation for the other front, thus
  obtaining a super or a subsolution. Then, Proposition \ref{pr:CompSpeed}
  provides a comparison of the wave speeds.\EEE

\emph{Ad (a).} We insert $u_{\gamma,\wt\ttv}$ into the equation for
$u_{\gamma,\ttv}$ and obtain
\begin{equation}
\label{eq:relation.beta}
\dot u_{\gamma,\wt\ttv} {-} \calA_{\gamma,\ttv}[u_{\gamma,\wt\ttv}] = \AAA \underbrace{\dot
u_{\gamma,\wt\ttv} {-}\calA_{\gamma,\wt\ttv}[u_{\gamma,\wt\ttv}]}_{=0} + \EEE \gamma 
\int_0^\infty \!\Gamma(\tau)\;\!\big(u_{\gamma,\wt\ttv}(t,x{+}\wt\ttv\tau) {-}
u_{\gamma,\wt\ttv}(t,x{+}\ttv\tau)\big) \dd \tau.
\end{equation}
Using $\Gamma(\tau)\geq 0$, $\calU'_{\gamma,\wt\ttv}(\xi)\geq
0$, and $\ttv \leq \wt\ttv$, we see that the integral is nonnegative. Together
with $\gamma\geq 0$, we find $\dot u_{\gamma,\wt\ttv}
-\calA_{\gamma,\ttv}[u_{\gamma,\wt\ttv}]\geq 0$, i.e.\ $u_{\gamma,\wt\ttv}$ is
a supersolution of $\dot u = \calA_{\gamma,\ttv}[u]$, whereas $u_{\gamma,\ttv}$
is a (sub)solution. Hence, Proposition \ref{pr:CompSpeed} gives \eqref{eq:C.diff.v}.

\emph{Ad (b).} \AAA Assume $u^\gamma_-\geq 0$ such that $u_{\gamma,\ttv}(t,x)\geq
u^\gamma_-\geq 0$. \EEE We insert this solution into the equation for
$u_{\wt\gamma,\ttv}$  and obtain
\begin{equation}
\label{eq:relation.ttv}
\dot u_{\gamma,\ttv} {-} \calA_{\wt\gamma,\ttv}[u_{\gamma,\ttv}] = \underbrace{\dot
u_{\gamma,\ttv} {-}\calA_{\gamma,\ttv}[u_{\gamma,\ttv}]}_{=0} + \EEE
(\gamma{-}\wt\gamma)  
\int_0^\infty \!\Gamma(\tau)\;\! u_{\gamma,\ttv}(t,x{+}\ttv\tau) \dd \tau.
\end{equation}
Since the integral is nonnegative, the assumption $\gamma\leq \wt\gamma$
implies that $u_{\gamma,\ttv}$ is a subsolution whereas $u_{\gamma,\wt\ttv}$
is a (super)solution. Hence,  \eqref{eq:C.diff.beta} follows via Proposition
\ref{pr:CompSpeed}. 

If $u^\gamma_-< 0$ we can rely on $u^{\gamma}_-\geq 0$. Interchanging the role
of $\gamma$ and $\wt\gamma$ in \eqref{eq:relation.ttv}, we see that
$u_{\gamma,\wt\ttv}$ is a supersolution of $\dot u = \calA_{\gamma,\ttv}[u]$
and  \eqref{eq:C.diff.v} follows again. 
\end{proof}

Having collected all preliminary estimates, we can prove the existence of
traveling fronts for the original \AAA memory equation
\eqref{eq:NaguMemo}. Moreover, we are able to give bounds on the corresponding
wave speed. \EEE

\begin{theorem}[Main result]
\label{th:MainResult}
  Let the Hypotheses (H1) and (H2) hold. Then, for all $\gamma \in [0,\gamma_*]$ the
  equation $\ttc = \ttC(\gamma,\ttc)$ has a unique solution $\ttc_\gamma$, and
  thus there exists a  unique (up to translation), asymptotically stable
  traveling-front solution $u(t,x) = \ol\calU_\gamma(x{+}\ttc_\gamma t)$ for the
  memory equation \eqref{eq:NaguMemo}, where $\ol\calU_\gamma$  satisfies the
  properties \eqref{eq:TravWave} and \eqref{eq:WaveProp}.  
\end{theorem}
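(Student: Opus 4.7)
The plan is to apply Proposition \ref{pr:TimeSpace} to transfer traveling fronts from the auxiliary equation \eqref{eq:Aux1} to the memory equation \eqref{eq:NaguMemo}. Proposition \ref{pr:ExiFrontAux} produces, for every $\ttv\in\R$, a traveling front $\calU_{\gamma,\ttv}$ with wave speed $\ttC(\gamma,\ttv)$, and Proposition \ref{pr:TimeSpace} shows that the resulting profile also solves \eqref{eq:NaguMemo} whenever the matching condition $\ttv = \ttC(\gamma,\ttv)$ is fulfilled. The entire task therefore reduces to solving the scalar fixed-point equation $\ttv = \ttC(\gamma,\ttv)$.

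To this end I would introduce $\Psi_\gamma(\ttv) := \ttC(\gamma,\ttv) - \ttv$ for $\ttv\in\R$ and collect the three structural properties already established. Continuity of $\Psi_\gamma$ follows from Lemma \ref{lem:continuity}. Strict monotonicity (in fact strict decrease) follows from Lemma \ref{lem:compare_beta-v}(a), which gives $\ttC(\gamma,\cdot)$ non-increasing, combined with the strictly decreasing term $-\ttv$. Finally, the one-sided speed bounds from Theorem \ref{thm:SpeedBounds} provide the uniform estimates
\[
\ttC(\gamma,\ttv) \leq \Bigl(\tfrac{\Phi_*D}{u^\gamma_+{-}u^\gamma_\rmm}\Bigr)^{1/2} \text{ for } \ttv\geq 0,
\qquad
\ttC(\gamma,\ttv) \geq -\Bigl(\tfrac{\Phi^*D}{u^\gamma_\rmm{-}u^\gamma_-}\Bigr)^{1/2} \text{ for } \ttv\leq 0.
\]
Consequently $\Psi_\gamma(\ttv)\to-\infty$ as $\ttv\to+\infty$ and $\Psi_\gamma(\ttv)\to+\infty$ as $\ttv\to-\infty$, so the intermediate value theorem (together with strict monotonicity) yields a unique zero $\ttc_\gamma\in\R$.

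Setting $\ol\calU_\gamma := \calU_{\gamma,\ttc_\gamma}$, Proposition \ref{pr:TimeSpace} immediately gives the desired traveling-front solution $u(t,x) = \ol\calU_\gamma(x{-}\ttc_\gamma t)$ of \eqref{eq:NaguMemo}, and the qualitative properties \eqref{eq:TravWave} and \eqref{eq:WaveProp} are inherited from Proposition \ref{pr:ExiFrontAux}. Uniqueness up to translation is then argued by reversing the calculation in the proof of Proposition \ref{pr:TimeSpace}: any traveling-front solution $V(x{-}ct)$ of \eqref{eq:NaguMemo} connecting $u^\gamma_-$ to $u^\gamma_+$ satisfies, after substituting $u(t{-}\tau,x) = V(\xi{+}c\tau)$ with $\xi = x{-}ct$, exactly the profile equation \eqref{eq:TravWaveODE} for the auxiliary problem with auxiliary parameter $\ttv = c$. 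Hence $c$ is a fixed point of $\ttC(\gamma,\cdot)$ and the uniqueness of $\ttc_\gamma$ forces $c = \ttc_\gamma$, while the uniqueness part of Proposition \ref{pr:ExiFrontAux} identifies $V$ with $\ol\calU_\gamma$ up to translation.

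The genuinely delicate point is the assertion of asymptotic stability. Proposition \ref{pr:ExiFrontAux} gives exponential attraction for the auxiliary evolution \eqref{eq:Aux1} with a fixed $\ttv$, but the true dynamics governing perturbations is the memory equation \eqref{eq:NaguMemo}, which differs from \eqref{eq:Aux1} for profiles that are not exact traveling fronts of speed $\ttc_\gamma$. I would expect the transfer to require a squeezing argument built on the strong comparison principle (Proposition \ref{pr:comparison}) applied directly to the memory equation, together with shifted traveling-front sub- and supersolutions extracted from $\ol\calU_\gamma$; closing this argument, and in particular showing that suitable initial data on $(-\infty,0]\times\R$ are trapped between two such barriers converging to a common translate, is where the bulk of the additional work would lie.
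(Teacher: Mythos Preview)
Your argument for existence and uniqueness of the fixed point $\ttc_\gamma$ is essentially the paper's proof: continuity from Lemma~\ref{lem:continuity} and monotonicity from Lemma~\ref{lem:compare_beta-v}(a), then Propositions~\ref{pr:ExiFrontAux} and~\ref{pr:TimeSpace}. One minor simplification: you do not need the explicit bounds from Theorem~\ref{thm:SpeedBounds} to force $\Psi_\gamma(\ttv)\to\mp\infty$; the non-increasing property of $\ttC(\gamma,\cdot)$ already gives $\ttC(\gamma,\ttv)\leq\ttC(\gamma,0)$ for $\ttv\geq 0$ and $\ttC(\gamma,\ttv)\geq\ttC(\gamma,0)$ for $\ttv\leq 0$, which is all you need. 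The paper uses exactly this shortcut.

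Your added uniqueness argument (reversing Proposition~\ref{pr:TimeSpace} to show any memory-equation front must be an auxiliary-equation front with $\ttv=c$) is a genuine improvement in detail over the paper, which simply asserts the result.

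On asymptotic stability you are right to flag a gap, and in fact the paper does not close it either: its proof ends after invoking Proposition~\ref{pr:TimeSpace}, which transfers only the traveling-front profile, not the attraction statement. The stability claimed in Proposition~\ref{pr:ExiFrontAux} concerns the spatially nonlocal evolution \eqref{eq:Aux1}, which coincides with the memory evolution \eqref{eq:NaguMemo} only on exact traveling waves of speed $\ttc_\gamma$. So the asymptotic-stability assertion in Theorem~\ref{th:MainResult} is, as stated, not proved in the paper; your instinct that a separate squeezing or comparison argument tailored to \eqref{eq:NaguMemo} would be required is correct.
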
 
\begin{proof} 
  For a fixed $\gamma \in [0,\gamma_*]$ the function
  $\ttv \mapsto \ttC(\gamma,\ttv )$ is continuous and non-increasing according
  to Lemma \ref{lem:continuity} and relation \eqref{eq:C.diff.v}, respectively.
  Thus, there is a unique solution to $\ttv = \ttC(\gamma,\ttv )$, which we call
  $\ttc_\gamma$. According to Proposition \ref{pr:ExiFrontAux} there is a unique
  (up to translation) traveling front
  \AAA $\ol\calU_\gamma:=\calU_{\gamma,\ttc_\gamma}$ for the auxiliary equation 
  \eqref{eq:Aux1}with $\ttv=\ttc_\gamma$.  

  Because of $\ttv=\ttc_\gamma= \ttC(\gamma,\ttc_\gamma)$ we can return to the
  memory equation \EEE using Proposition \ref{pr:TimeSpace}, and the result is
  established.
\end{proof}

Next we combine the available estimates and monotonicity of the wave speed
$\ttC(\gamma, \ttv)$ to give bounds on $\ttc_\gamma$. \AAA Before doing so, we
recall the classical result that for the local equation, i.e.\ $\ttv=0$, the
sign of the wave speed $\ttC(\gamma,0)$ is opposite to the sign of
\[
\calF(\gamma):=\int_{u^{\gamma}_-}^{u^\gamma_+} F_\gamma(u) \dd u. 
\]
Indeed, to see this, we multiply \eqref{eq:TravWaveODE} by $\calU'$ and
integrate over $\R$ leading to 
\[
- \ttC(\gamma,0) \int_\R  \calU'_{\gamma,0}(\xi)^2 \dd \xi= \lim_{R\to \infty} 
\Big(\frac D2 \calU'_{\gamma,0}(\xi)^2 \Big)\Big|_{-R}^R + \int_\R
F_\gamma(\calU_{\gamma,0}(\xi))\calU'_{\gamma,0}(\xi)\dd\xi=  \calF(\gamma). 
\]
In particular, for $\calF(\gamma)=0$, we obtain $\ttC(\gamma,0)=0$, which means
that a standing wave exists. Obviously this implies $\ttc_0=0$, i.e.\ also the
memory equation \eqref{eq:NaguMemo} has the same standing wave solution as the
local equation with nonlinearity $F_\gamma$. 
\EEE

\begin{corollary}[Bounds on the wave speed]
  \label{co:SpeedBounds} Let the assumptions (H1) and (H2) hold and let
  $F_\gamma$ satisfy \eqref{eq:Ass.Fgamma}, then for all
  $\gamma \in [0,\gamma_*]$, we have the following bounds for the speeds
  $\ttc_\gamma$ of the traveling front $\ol \\calU_\gamma$ for the memory
  equation \eqref{eq:NaguMemo}: 
\begin{equation}
  \label{eq:cgamma.estim}
\begin{aligned}
\calF(\gamma)\leq 0 & \quad \Longrightarrow \quad 0 \leq \ttc_\gamma \leq
\ttC(\gamma,0) \AAA \leq \Big( \frac{\Phi_* D}{u^\gamma_+{-}u^\gamma_\rmm}\Big)^{1/2} ,
\\
\calF(\gamma) \geq 0 & \quad \Longrightarrow \quad  0 \geq \ttc_\gamma \geq 
\ttC(\gamma,0) \AAA \geq - \Big( \frac{\Phi^* D}{u^\gamma_\rmm {-}u^\gamma_-}\Big)^{1/2} .
\end{aligned}
\end{equation}
\end{corollary}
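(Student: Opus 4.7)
The plan is to exploit three ingredients already at our disposal: the non-increasing monotonicity of $\ttv \mapsto \ttC(\gamma,\ttv)$ from Lemma \ref{lem:compare_beta-v}(a), the sign information for $\ttC(\gamma,0)$ derived from the identity
\[
-\ttC(\gamma,0) \int_\R \calU'_{\gamma,0}(\xi)^2 \dd\xi = \calF(\gamma),
\]
displayed immediately before the corollary, and the explicit bounds of Theorem \ref{thm:SpeedBounds} specialized to $\ttv = 0$.

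First I treat the case $\calF(\gamma) \leq 0$, which by the identity above implies $\ttC(\gamma,0)\geq 0$. To pin down the sign of the fixed point $\ttc_\gamma$ defined by $\ttc_\gamma = \ttC(\gamma,\ttc_\gamma)$ (existing by Theorem \ref{th:MainResult}), I argue by contradiction: if $\ttc_\gamma < 0$, then the monotonicity \eqref{eq:C.diff.v} gives $\ttC(\gamma,\ttc_\gamma)\geq \ttC(\gamma,0)\geq 0 > \ttc_\gamma$, contradicting the fixed-point property. Hence $\ttc_\gamma \geq 0$, and applying \eqref{eq:C.diff.v} once more, this time at $0\leq \ttc_\gamma$, yields $\ttc_\gamma = \ttC(\gamma,\ttc_\gamma) \leq \ttC(\gamma,0)$. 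The rightmost inequality comes directly from the upper bound in \eqref{eq:SpeedEstim.vpos} at $\ttv=0$, where the $\ttv$-dependent terms drop out; the bound $(\Phi_* D/(u^\gamma_+ {-} u^\gamma_\rmm))^{1/2}$ is then obtained by pushing $a_*,b_*$ to the extremes $u^\gamma_\rmm,u^\gamma_+$, which is permissible as noted in the proof of Theorem \ref{thm:SpeedBounds}.

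The case $\calF(\gamma)\geq 0$ is handled by a symmetric argument: now $\ttC(\gamma,0)\leq 0$; if $\ttc_\gamma > 0$ then $\ttC(\gamma,\ttc_\gamma)\leq \ttC(\gamma,0)\leq 0 < \ttc_\gamma$, a contradiction, so $\ttc_\gamma \leq 0$ and consequently $\ttc_\gamma = \ttC(\gamma,\ttc_\gamma) \geq \ttC(\gamma,0)$. The last inequality then follows from the lower bound in \eqref{eq:SpeedEstim.vneg} evaluated at $\ttv=0$.

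There is no real obstacle here; the corollary is essentially a bookkeeping consequence of the monotonicity lemma, the standing-wave identity, and the a priori bounds of Theorem \ref{thm:SpeedBounds}. The only point requiring care is to apply the monotonicity \eqref{eq:C.diff.v} on the correct side of zero in each case — this is precisely what supplies both chains of inequalities in \eqref{eq:cgamma.estim} from a single sign-of-$\calF(\gamma)$ dichotomy.
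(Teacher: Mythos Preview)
Your proof is correct and follows essentially the same approach as the paper's: both use the non-increasing monotonicity of $\ttv\mapsto\ttC(\gamma,\ttv)$ together with the sign of $\ttC(\gamma,0)$ (read off from the displayed identity involving $\calF(\gamma)$) to trap the fixed point $\ttc_\gamma$ between $0$ and $\ttC(\gamma,0)$, and then invoke Theorem~\ref{thm:SpeedBounds} at $\ttv=0$ for the outermost bounds. Your explicit contradiction argument just spells out what the paper compresses into the phrase ``we easily see that the solution $\ttc_\gamma$ of $\ttv=\ttC(\gamma,\ttv)$ always lies between $0$ and $\ttC(\gamma,0)$.''
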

\begin{proof} \AAA Since $ \ttv \mapsto \ttC(\gamma, \ttv)$ is non-increasing
  and continuous, we easily see that the solution $\ttc_\gamma$ of
  $\ttv=\ttC(\gamma,\ttv)$ always lies between $0$ (not included) and
  $\ttC(\gamma,0)$ (possibly included). This provides the first two estimates in
  both lines of \eqref{eq:cgamma.estim}. \AAA The last estimate (in both lines) 
  is a direct consequence of \eqref{eq:SpeedEstim} with $\ttv=0$. \EEE
\end{proof}

%%%%%%%%%%%%%%%%%%%%%%%%%%%%%%%%%%%%%%%%%%%%%%%%%%%%%%%%%%%%%%%%%%%%%%%%%%%%
%%%%%%%%%%%%%%%%%%%%%%%%%%%%%%%%%%%%%%%%%%%%%%%%%%%%%%%%%%%%%%%%%%%%%%%%%%%%

\section{Derivation of memory equations from local PDEs}
\label{se:Derivation}

Equations with memory can be derived from local models if so-called internal
variables $w$ are eliminated.  Changes in the main variable $u$ induce
instantaneous changes in $w$, but the internal dynamics of the $w$ system 
leads to the delayed back-coupling from $w$. This effect stays local in space,
if the diffusion in the variable $w$ can be neglected. 

\AAA
\subsection{Parabolic equation coupled to linear ODEs}   
\label{su:ParaODEs}

For the modeling of pulse propagation in nerves, one often uses the coupling 
between a parabolic equation and ODEs, see e.g.\ \cite[Eqn.\,(0.1)]{Carp77GASP} for the
Hodgkin-Huxley equation and \cite{NaArYo62APTL,Deng91EIMT} for the FitzHugh--Nagumo-like
equations, which corresponds to the case $m_\rmw=1$ in the following model:
\begin{equation}
  \label{eq:PDE-ODE-System}
  \dot u = Du_{xx} + F(u) + \sum_{i=1}^{m_\rmw} a_i w_i, \qquad \dot w_i = -
  \lambda_i w_i + b_i u \ \text{ for }i=1,\ldots,m_\rmw,
\end{equation}
where $a_i, \ b_i$, and $\lambda_i$ are fixed real parameters with
$\lambda_i>0$. Clearly, the linear ODEs can be solved locally in space by
$w_i(t,x)=w_i(0,x)\ee^{-\lambda_it} + \int_0^t
\ee^{-\lambda_i(t{-}\tau)} b_i u(\tau,x)\dd \tau$. Assuming that there is
infinite history (which is compatible with our search for traveling fronts that
exist for all time) we may also write $w_i(t,x)=\int_0^\infty
\ee^{-\lambda_i \tau} b_i u(t{-}\tau,x)\dd \tau$. With this we obtain the
memory equation \eqref{eq:NaguMemo} with the specific kernel 
\begin{equation}
  \label{eq:Gamma.Sum}
  \Gamma(\tau)= \frac1\gamma \sum_{i=1}^{m_\rmw} \ee^{-\lambda_i \tau} a_i b_i  \quad
\text{ with  }\gamma = \sum_{i=1}^{m_\rmw}  \frac{a_ib_i}{\lambda_i} . 
\end{equation}
Clearly, our theory is applicable also in cases where some of the
products $a_ib_i$ are negative, as long as $\Gamma$ is nonnegative, e.g.\
$\ee^{-2\tau}- 2\ee^{-3\tau} +
\ee^{-4\tau}=(\ee^{-\tau}{-}\ee^{-2\tau})^2$. Thus, we need a positive feedback
$a_ib_i$ for several species $i$, but we may also allow for a negative feedback
$a_jb_j$ for some components if they are not too big. 

For systems of the type \eqref{eq:PDE-ODE-System} the existence of \EEE traveling
waves, in particular pulses, is studied in \cite{GurRei18PFNS} in detail, even
in cases where  Hypothesis (H2) is violated, i.e.\ $\Gamma$ may change sign.  
The latter is necessary to handle 
non-monotone fronts and pulses. Therefore, the assumptions in
\cite{GurRei18PFNS} are more general with respect to the types of traveling
waves (e.g.\ pulses, non-monotone fronts) under consideration.
	
However, when studying only traveling fronts, the assumptions of the present
article are much more general, since (H2) does not require any assumptions on
the (exponential) structure of $\Gamma$. In particular, we
believe that our approach may be generalized to nonlinear coupling terms as in
Section \ref{su:Generalizations}, whereas the approach in
\cite{GurRei18PFNS} relies on the linear structure of the 
equations for $w_i$. Moreover, the present approach allows to calculate
bounds on the wave speed, \AAA see Theorem \ref{thm:SpeedBounds} and Corollary
\ref{co:SpeedBounds}. \EEE

\subsection{The two-scale homogenization model}
\label{s:TwoscaleHom}

\AAA A significant body of work (cf.\ \cite{HudZin95ETWR, Xin00FPHM,
  Hein01WSRD, BerHam02FPPE, MaScUe07EATW, ChGuWu08TWDP, BodMat14EHTW})
considers the propagation in periodic media. If the period of the oscillating
coefficient is very small one can perform a homogenization and consider
traveling waves in the homogenized system. However, often the diffusion
coefficients of some of the species are very small as well, which leads to a \EEE
coupled system of the form
\begin{equation} 
\label{eq:SystOrig}
\left.\begin{aligned}
%\label{eq:SystOrig.a}
\dot v&= \div\big( \bbD_\rmv(\tfrac1\eps x)\nabla v\big) + f_\rmv(\tfrac1\eps x,
v,w)\\
%\label{eq:SystOrig.b}
\dot w&=\div\big(\eps^2 \bbD_\rmw(\tfrac 1\eps x) \nabla w\big) +
f_\rmw(\tfrac1\eps x, v,w)
\end{aligned} \right\}  \quad t>0,\ x\in \Omega.
\end{equation}
Here the functions $\bbD_\rmv$, $\bbD_\rmw$, $f_\rmv(\cdot,v,w) \in \R^{m_\rmv}$, and
$f_\rmw(\cdot, v,w)\in \R^{m_\rmw}$ are assumed to be 1-periodic in each component of
$y=\frac1\eps x\in \R^d$. 

It is shown in \cite{MiReTh14TSHN} that under suitable conditions on the diffusion
matrices $\bbD_\rmv$ and $\bbD_\rmw$ and the reaction terms $f_\rmv$ and
$f_\rmw$ the solutions $(v_\eps,w_\eps)$ to the initial value problem converge
in the limit $\eps \to 0 $ to solutions $(V,W)$ of the following two-scale
model. For this we denote by $\bbT^d=\R^d/\Z^d$ the $d$-dimensional torus
obtained by identifying the opposite sides of the unit cube. While the fast
diffusion of the variable $v_\eps$ guarantees that the limit $V$ only depends
on the macroscopic variable $x \in \Omega$, the limit $W$ of the solutions
$w_\eps$ \AAA is a two-scale function depending \EEE also on the microscopic
variable $y\in \bbT^d$: 
\begin{subequations} 
\label{eq:Syst}
\begin{align}
\label{eq:Syst.a}
\dot V(t,x)&= \div_x\big( \bbD_\rmv^\text{eff}\nabla_x V(t,x)\big) +
\int_{\bbT^d} f_\rmv(y, V(t,x),W(t,x,y)) \dd y &&\text{in }\Omega,\\
\label{eq:Syst.b}
\dot W(t,x,y)&=\div_y\big(\bbD_\rmw(y) \nabla_y W(t,x,y)\big) +
f_\rmw(y, V(t,x),W(t,x,y)) && \text{in }\Omega \ti \bbT^d.
\end{align} 
\end{subequations}
Here $\bbD_\rmv^\text{eff}$ is the effective diffusion tensor
obtained by classical homogenization, see e.g.\ \cite{BeLiPa78AAPS}. 
The main point in this theory is that it is not possible to replace
the slowly diffusing component $w^\eps$ by its macroscopic average. We
rather need to keep track of the microscopic distribution of the
$w^\eps$ relative to the underlying periodic microstructure. This is
exactly done by the function $W$ depending on $x$ and $y$. 

\AAA The original theory in \cite{MiReTh14TSHN} and \cite{Reic15TSHS} was
developed for bounded Lipschitz domains $\Omega \subset \R^d$. We show in
Appendix \ref{sec:appendix-TS} how the result can be generalized to equations
posed on the full space $\Omega=\R^d$, which is needed to treat traveling
waves. For this one introduces \EEE the weighted Lebesgue spaces
\begin{align}
  \label{eq:WeightedL2}
  \rmL^2_\varrho(\bbR^d) 
  & := \Bigset{ u \in \rmL^2_\mathrm{loc}(\bbR^d) }
      {\Vert u \Vert^2_{\rmL^2_\varrho(\bbR^d)} := 
      \int_{\bbR^d} \varrho(x) |u(x)|^2 \dd x  < \infty } , 
\\ \nonumber
  \rmL^2_\varrho(\bbR^d\ti \bbT^d) 
  & := \Bigset{  U \in \rmL^2_\mathrm{loc} (\bbR^d\ti\bbT^d) }
   {\Vert U \Vert_{\rmL^2_\varrho(\bbR^d\times\bbT^d)}^2 := 
   \int_{\bbR^d\times\bbT^d} \varrho(x) |U(x,y)|^2 \dd x \dd y < \infty },
\end{align}
\AAA where for a radius $R>0$ we set $\varrho(x)=1/\cosh(|x|/R)$. With this and
the natural conditions on $\bbD_\rmv$, $\bbD_\rmw$, $f_\rmv$, and $f_\rmw$ we
derive the following result in Appendix \ref{sec:appendix-TS}. \EEE

\begin{theorem}[Two-scale homogenization]
\label{thm:ts-conv-2}
Let $(v_\eps, w_\eps)_{\eps>0}$ denote a sequence of solutions to the initial
value problem \eqref{eq:SystOrig} \AAA on $\Omega=\R^d$ with initial conditions
$(v_\eps^0,w_\eps^0)=(v_\eps(0,\cdot), w_\eps(0,\cdot)$ that are bounded in 
$\rmL^\infty(\R^d)^{m_\rmv+m_\rmw}$ and converge as follows: \EEE
\begin{align*}
  v_\eps^0 \to V^0 \text{ in } \rmL^2_\varrho(\bbR^d)
  \quad\text{and}\quad
  w_\eps^0 \AAA \stc \EEE  W^0 \text{ in } \rmL^2_\varrho(\bbR^d\ti \bbT^d) ,
\end{align*}
then the solution $(v_\eps, w_\eps)$ converges to the solution $(V,W)$ of the
two-scale system \eqref{eq:Syst},
\begin{align*}
  \begin{array}{ll}
    v_\eps \rightharpoonup V \ \text{ in } \rmL^2(0,T; \rmH^1_\varrho(\bbR^d)), 
    & \ \nabla v_\eps \wtc \nabla V {+} \nabla_y \wt{V} \text{ in } \rmL^2(0,T;
    \rmL^2_\varrho( \bbR^d\ti \bbT^d)) , \\ 
    w_\eps \stc W \text{ in } \rmL^2(0,T; \rmL^2_\varrho( \bbR^d\times\bbT^d))  , 
    & \eps \nabla w_\eps \wtc \nabla_y W \qquad \text{ in } \rmL^2(0,T;
    \rmL^2_\varrho( \bbR^d\ti\bbT^d)) ,  
  \end{array}
\end{align*}
where $\wt{V} \in \rmL^2([0,T] \times\Omega ; \rmH^1(\bbT^d))$ with
$\int_{\bbT^d} \wt{V} \dd y = 0$ denotes the corrector function.
\end{theorem}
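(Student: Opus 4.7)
The strategy is to reduce the full-space result to the bounded-domain theorem of \cite{MiReTh14TSHN,Reic15TSHS} by working in the weighted spaces $\rmL^2_\varrho$ and $\rmH^1_\varrho$ built on the weight $\varrho(x)=1/\cosh(|x|/R)$. The point of this weight is the pointwise estimate $|\nabla\varrho(x)|\leq \frac1R\varrho(x)$, so all integration-by-parts manipulations produce only lower-order terms that can be absorbed into the leading ones (and controlled by choosing $R$ large or by Gronwall). In this way the bounded-domain machinery — energy estimates, two-scale convergence, passage to the limit in the nonlinearities — carries over essentially verbatim.

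\textbf{Step 1: A priori bounds.} First I would establish a uniform $\rmL^\infty$-bound on $(v_\eps,w_\eps)$ from the $\rmL^\infty$-bound on the initial data together with the standard one-sided growth/invariant-region assumptions on $f_\rmv,f_\rmw$; the $\rmL^\infty$-bound is global and unaffected by the weight. Then I would test \eqref{eq:SystOrig} against the weighted functions $\varrho v_\eps$ and $\varrho w_\eps$. Writing $\nabla(\varrho v_\eps)=\varrho\nabla v_\eps+v_\eps\nabla\varrho$, the cross terms produce contributions like $\int\bbD_\rmv\nabla v_\eps\cdot v_\eps\nabla\varrho\,\dd x$, which by Cauchy--Schwarz and $|\nabla\varrho|\leq\varrho/R$ are absorbable into the coercive term $\int\bbD_\rmv\nabla v_\eps\cdot\nabla v_\eps\,\varrho\,\dd x$ up to a lower-order $\rmL^2_\varrho$-mass term. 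The same device applied to the $w_\eps$-equation yields uniform bounds for $v_\eps$ in $\rmL^2(0,T;\rmH^1_\varrho(\R^d))$ and for $w_\eps$ and $\eps\nabla w_\eps$ in $\rmL^2(0,T;\rmL^2_\varrho(\R^d))$ (respectively $\rmL^2_\varrho(\R^d\ti\bbT^d)$ after unfolding).

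\textbf{Step 2: Compactness and two-scale limits.} From the a priori bounds I extract subsequences: $v_\eps\rightharpoonup V$ weakly in $\rmL^2(0,T;\rmH^1_\varrho)$ with a two-scale corrector $\nabla v_\eps\wtc\nabla V+\nabla_y\wt V$, and $w_\eps\wtc W$ in $\rmL^2_\varrho(\R^d\ti\bbT^d)$ with $\eps\nabla w_\eps\wtc\nabla_y W$. Two-scale convergence on the weighted full space is defined exactly as in the bounded case but against test functions in $\rmC_\rmc^\infty(\R^d)\otimes\rmC^\infty(\bbT^d)$; the weight commutes through all the definitions since $\varrho\in\rmL^1(\R^d)$ and is smooth. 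To upgrade $w_\eps\wtc W$ to strong two-scale convergence (needed for the reaction terms), I would use an Aubin--Lions argument in the weighted spaces: the bound on $\dot w_\eps$ coming from \eqref{eq:SystOrig} (tested against $\varrho\phi$) combined with the $\eps\nabla w_\eps$-bound yields equicontinuity in time in a slightly larger space, and the two-scale unfolding of the Poincar\'e inequality on $\bbT^d$ yields compactness in the $y$-variable. The extra $\nabla\varrho$-terms appearing in these estimates are again controlled via $|\nabla\varrho|\leq\varrho/R$.

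\textbf{Step 3: Passage to the limit.} With strong two-scale convergence of $w_\eps$ and uniform $\rmL^\infty$-bounds, the nonlinear terms $f_\rmv(\tfrac{\cdot}\eps,v_\eps,w_\eps)$ and $f_\rmw(\tfrac{\cdot}\eps,v_\eps,w_\eps)$ pass to the limit by continuity, giving $\int_{\bbT^d}f_\rmv(y,V,W)\,\dd y$ in \eqref{eq:Syst.a} and $f_\rmw(y,V,W)$ in \eqref{eq:Syst.b}. For the diffusive terms I would test \eqref{eq:SystOrig} with $\varrho(x)\varphi(x)$ for $v_\eps$ and with $\varrho(x)\psi(x,\tfrac x\eps)$ for $w_\eps$, pass to the two-scale limit, and identify $\bbD_\rmv^{\text{eff}}$ by the usual cell-problem argument, which is purely local in $y$ and therefore unaffected by the weight. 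Uniqueness of the limit system (by a standard Gronwall/monotonicity argument on $\rmL^2_\varrho$, using Lipschitz continuity of $f_\rmv,f_\rmw$ on the invariant region) then upgrades extraction to full-sequence convergence.

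\textbf{Main obstacle.} The technically delicate point is the strong two-scale compactness of $w_\eps$, because $w_\eps$ diffuses only on the slow scale ($\eps^2\bbD_\rmw$) so spatial compactness on the macroscopic scale is not available; one must combine an $\eps$-uniform Aubin--Lions estimate in time with the two-scale unfolded Poincar\'e inequality in the $y$-direction, and verify that the weight $\varrho$ does not destroy either. The saving grace is precisely that $|\nabla\varrho|/\varrho\leq 1/R$ is bounded and small for large $R$, so all commutator terms between $\varrho$ and differential operators are of lower order and absorbable. This is why the specific choice $\varrho=1/\cosh(|x|/R)$ (rather than, say, a polynomial weight) is crucial: it is strictly positive, integrable, and has relative gradient bounded by $1/R$.
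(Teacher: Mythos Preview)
Your overall framing---work in the weighted spaces, exploit $|\nabla\varrho|\leq \varrho/R$ so that commutators with $\varrho$ are lower order---is exactly right, and Steps 1 and 3 are essentially what the paper does. The genuine gap is in Step~2, and it is precisely the point you flag as the ``main obstacle''.

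Your proposed mechanism for upgrading $w_\eps\wtc W$ to $w_\eps\stc W$ is Aubin--Lions combined with the Poincar\'e inequality on $\bbT^d$. But this does not work: after unfolding, $\calT_\eps w_\eps$ is bounded in $\rmL^2(0,T;\rmL^2_\varrho(\R^d;\rmH^1(\bbT^d)))$, and Aubin--Lions would require the embedding $\rmL^2_\varrho(\R^d;\rmH^1(\bbT^d))\hookrightarrow \rmL^2_\varrho(\R^d\times\bbT^d)$ to be compact. It is not---there is no regularity whatsoever in the macroscopic variable $x$, so sequences that oscillate in $x$ provide an immediate counterexample. This failure has nothing to do with the unbounded domain or the weight; it is already present on a bounded $\Omega$, because the slow species carries only $\eps\nabla w_\eps$ bounded, i.e.\ only microscopic regularity. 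So no amount of care with $\varrho$ rescues this step.

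The paper (following \cite{MiReTh14TSHN,Reic15TSHS}) circumvents compactness entirely. One first \emph{assumes} the two-scale limit $(V,W)$ solves \eqref{eq:Syst}, then constructs a \emph{gradient folding operator} $\mathcal{G}^{\mathrm{slow}}_\eps:\rmL^2_\varrho(\R^d;\rmH^1(\bbT^d))\to \rmH^1_\varrho(\R^d)$ via a Lax--Milgram problem, producing a one-scale recovery sequence $\mathcal{G}^{\mathrm{slow}}_\eps W$ whose two-scale structure matches $W$. Testing the $w_\eps$-equation with $\varrho\,(w_\eps-\mathcal{G}^{\mathrm{slow}}_\eps W)$ (and similarly $\varrho\,(v_\eps-\mathcal{G}^{\mathrm{stand}}_\eps V)$ for $v_\eps$) and using global Lipschitz continuity of $f_\rmv,f_\rmw$ plus Gronwall gives a direct energy estimate on the differences, which yields the strong two-scale convergence without any compact embedding. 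The weighted-space modifications needed on $\R^d$ are then just (i) the unfolding estimate $\|\calT_\eps\varrho-\varrho\|_{\rmL^\infty}\leq \eps\sqrt{d}/R$, and (ii) inserting $\sqrt{\varrho}$ into the Lax--Milgram definition of $\mathcal{G}^{\mathrm{slow}}_\eps$. This is the missing idea in your plan.
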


\AAA 
The major advantage of the two-scale model \eqref{eq:Syst} is that it is again
homogeneous in the macroscopic spatial variable $x\in \R^d$, while the periodic
structure is restricted to the microscopic variable $y\in \bbT^d$. Thus, we
have a coupling that is local in $x\in \R^d$ from $V(s,x)\in \R^{m_\rmv}$ to 
$W(s,x,\cdot)\in \rmH^1(\bbT^d)^{m_\rmw} $. At later times $t>s$ the internal parabolic
evolution of $W(t,x,\cdot)$ via \eqref{eq:Syst.b} leads to a delayed 
feedback of $V(s,x)$ for all $s<t$ of memory type.

In particular, it is possible to look for exact traveling
waves for \eqref{eq:Syst} in the form 
\[
(V(t,x),W(t,x,y))= (\calV(x{-}\ttc t), \calW(x{-}\ttc t,y)) \in
\R^{m_\rmv+m_\rmw}.
\] 
Transforming these two-scale solutions back into the one-scale form (called
folding in \cite{MieTim07TSHE,MiReTh14TSHN}) one obtains periodically
oscillating traveling 
waves, namely  
\[
(\wt v_\eps(t,x), \wt w_\eps(t,x))= (\calV(x{-}\ttc t), \calW(x{-}\ttc
t,\frac1\eps x))
\]
that provide the correct first-order approximation of the true solutions
$(v_\eps,w_\eps)$, see Figure \ref{fig:eps} for a plot of such solutions.
\EEE

\subsection{Reduction to a scalar equation}
\label{su:ReductScalar}

\AAA Here we give an explicit example for deriving a scalar memory equation
from a two-scale system with $(v,w) \in \R^1\ti \R^1$. We start by looking at a
specific case for \EEE the full time-dependent system \eqref{eq:Syst}, namely
the FitzHugh-Nagumo case where \AAA $V$ and $W $ are scalar and are defined on
the real line $\Omega = \bbR$. Moreover, we assume linear couplings with
$f_\rmv(y,V,W) = \Phi(y,V) + \alpha(y)W$ and
$f_\rmw(y,V,W) = - b(y) W + \beta(y)V$: \EEE
\begin{subequations}
  \label{eq:NagumoDelay}
\begin{align}
 \label{eq:NagumoDelay.a}
 \dot V(t,x)\quad &=  D_\rmv V_{xx}(t,x) +\int_{\bbT} \Phi(y,V(t,x) )\dd y 
 + \int_{\bbT} \alpha (y)  W(t,x,y)\dd y,\\ 
 \label{eq:NagumoDelay.b}
 \dot W(t,x,y) & = \left( \bbD_w(y) W_{y}(t,x,y) \right)_y - b(y)W(t,x,y) 
   + \beta (y)V(t,x) ,
\end{align}
\end{subequations}
\AAA where $\bbT =\bbT^1=\R_{\!/\Z}$. All functions $\Phi$, $\alpha$, $\beta$
and $b$ are assumed to be continuous. In general, \EEE the coupling parameters
$\alpha(y)$ and $\beta(y)$ may change sign, while the microscopic diffusion
coefficient $\bbD_\rmw(y)$ and the damping factor $b$ are assumed to be
strictly positive. \AAA The solution $\psi(t,y)$ of the linear equation
$\dot \psi=(\bbD(y)\psi_y)_y - b(y)\psi$, $\psi(0,\cdot)=\psi^0$ has the
semigroup representation
$\psi(t,y)=\int_\bbT H(t,y,\wt y) \psi^0(\wt y)\dd \wt y$, where the Greens
function $H(t,y,\wt y)$ is strictly positive for $t>0$ by the maximum principle
for linear parabolic equations, see e.g.\ \cite[Thm.\,12,\,p.\,376]{Evan98PDE}.

By introducing the effective nonlinearity $F(V):= \int_\bbT \Phi(y,V)\dd y$ and
expressing $W$ as a linear functional over the history of $V$ via
\eqref{eq:NagumoDelay.b}, \EEE we obtain a Nagumo equation with memory kernel:
\begin{equation}
  \label{eq:Nag**Memo}
\begin{aligned}
&  \dot V(t,x) = V_{xx}(t,x) + F(V(t,x)) + \int_0^{\infty}\!
  \wh\Gamma(\tau)
  V(t{-}\tau,x)\dd \tau \\
&\text{with } \wh\Gamma(\tau)= \int_{\bbT} \int_{\bbT} H(\tau,y,\wt y) \alpha(\wt y)
\beta(y) \dd y \dd \wt y . 
\end{aligned} 
\end{equation}
\AAA Thus, using $H> 0$ 
a sufficient condition for $\wh\Gamma(\tau)\geq 0$ in our Hypothesis (H2)
is given by $\alpha(y), \beta(y)\geq 0$ for all $y \in \bbT$ or
vice versa $\alpha(y),\beta(y)\leq 0$. 

\begin{remark}[On the positivity of $\wh\Gamma$]
\label{re:Pos.whGamma}
The given conditions on $\alpha$ and $\beta$ are far from optimal. 
Indeed, writing $\bbL\psi=-(\bbD_\rmw\psi_y)_y + b\psi$ we find a complete
orthonormal set $(\psi_n)_{n\in \N}$ in $\rmL^2(\bbT)$ of injunctions, i.e.\
$\bbL\psi_n =\lambda_n \psi_n$ with $0<\lambda_1\leq \lambda_2 \leq \cdots
\leq \lambda_n \to \infty$. 

Expanding the coupling coefficients $\alpha(y)=\sum_\N a_n\psi_n(y)$ and 
$\beta(y)=\sum_\N b_n\psi_n(y)$, we find $
\wh\Gamma(\tau)=\sum_{n=1}^\infty  \ee^{-\lambda_n \tau} a_n b_n$, and conclude
that the property  $a_nb_n\geq 0$ for all $n\in \N$ is sufficient. Hence,
setting $\alpha(y)=\beta(y)=\psi_n(y)$ and noticing that, by the Sturm-Liouville
property, the function $\psi_n$ has $ \lfloor \frac{n-1}2 \rfloor$ zeros,  we
have constructed an example for $\alpha$ and $\beta$ that change sign and still
satisfy $\wh\Gamma(\tau)\geq 0$. 

However, it is not even necessary that all of the products $a_nb_n$ are
nonnegative, see the arguments after \eqref{eq:Gamma.Sum}.
In \cite{GurRei18PFNS} the explicit choices $\alpha(y)= - \sum_{n=1}^{m_*}
\psi_n(y)$ and $\beta(y)=\sum_{n=1}^{m_*} \sigma_n \psi_n(y)$ were made, and
different signs for $\sigma_n$ are explicitly allowed. 
\end{remark}
\EEE

\subsection{A homogenization example}
\label{subsec:HomogExamp}

\AAA In the spirit of Remark \ref{re:Pos.whGamma}  we consider the two-scale \EEE
system  \eqref{eq:NagumoDelay} \AAA in a specific example fulfilling all the
assumptions of our theory. \EEE For the operator $\bbL W :=  -W_{yy} + W$ we
have the eigenfunctions $\psi_{2n}(y) = \sqrt2 \sin(2\pi n y)$ with eigenvalues
$\lambda_{2n} =1{+}(2\pi n)^2$.  We set
\begin{equation*}
	\alpha(y) = \psi_2(y) + \psi_4(y)
	\quad\text{and}\quad
	\beta(y) =  \psi_2(y) + 10 \, \psi_4(y) - \psi_6(y) .
\end{equation*}
Hence, Hypothesis (H2) is satisfied with
$\gamma=\int_0^\infty \wh\Gamma(\tau)\dd \tau = 1/\lambda_2 + 10/\lambda_4
\approx 0.08763> 0$.  Notice that the coupling coefficients $\alpha(\cdot)$ and
$\beta(\cdot)$ do not change sign simultaneously. For the cubic function, we
choose $F(u) = -u(u{-}0.25)(u{-}1)$. For this choice of parameters, the
two-scale solution $(V,W)$ is depicted in Figure \ref{fig:limit}, where the
microscopic average of the component $W$ vanishes, since the constant
eigenfunction $\psi_1(y)=1$ is not activated because of
$\langle \psi_1,\beta\rangle =0$. However, the periodic oscillations of $W$ on
the micro-scale are captured by the two-scale limit.
\begin{figure}[h]
\includegraphics[width=0.5\textwidth]{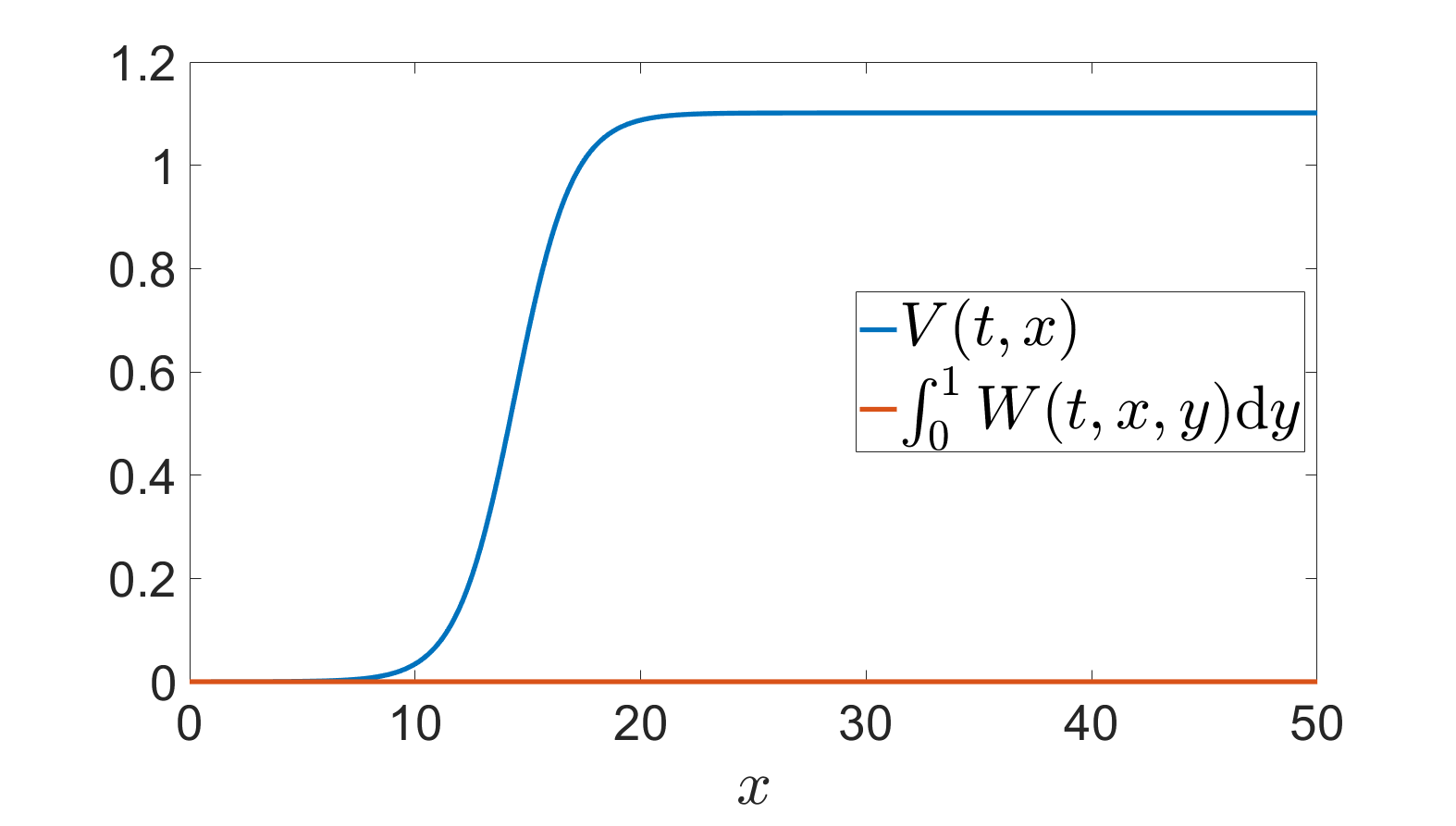}
\includegraphics[width=0.5\textwidth]{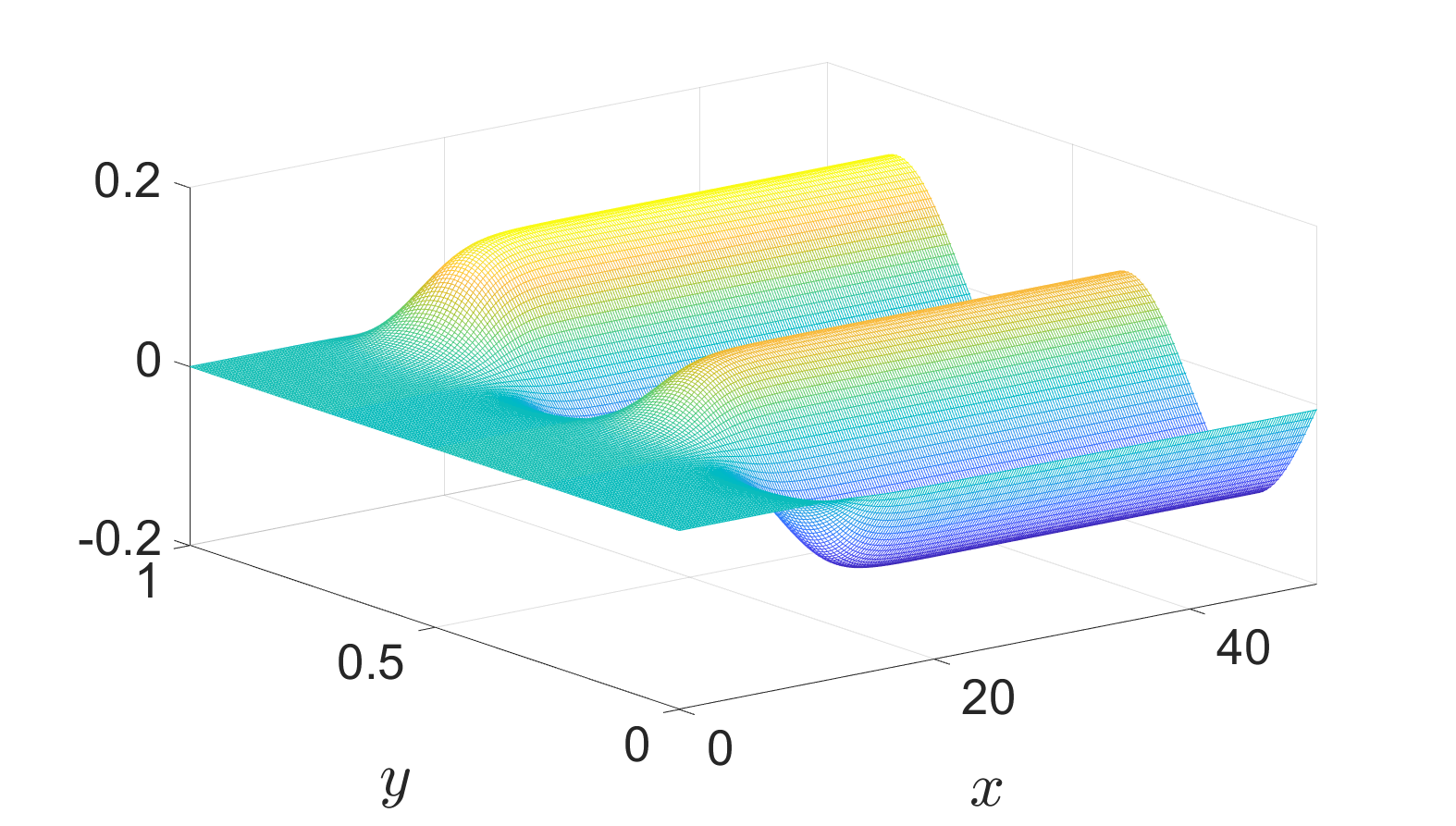}
\caption{The front $(V, W)$ of the two-scale system
  \eqref{eq:NagumoDelay} moves from right to left in $x$. Left: $V$ and
  average $\int_\bbT W(t,x,y) \dd y$. Right: the $W(t,x,y)$-component in
  $(x,y)$-plane.}
\label{fig:limit}
\end{figure}

We can compare the solution $(V,W)$ of our two-scale limit system with the
solution $(v_\eps, w_\eps)$ of the original system \eqref{eq:Syst} with rapidly
oscillating coefficients. In Figure \ref{fig:eps} one can observe that the
$\eps$-periodic coupling coefficients induce $\eps$-periodic oscillations of
the solutions. Whereas the amplitude of the oscillations of the component
$w_\eps$ is of order $O(1)$, the postive diffusion $D_\rmv$ reduces the
amplitude of the oscillations of the component $v_\eps$ to order $O(\eps)$,
such that it vanishes in the limit $\eps\to0$. Notice that the
component $w_\eps$ also changes sign. Overall, the effective behavior of the
oscillating solution $(v_\eps,w_\eps)$ is nicely captured by the two-scale
limit $(V,W)$.
\begin{figure}[h]
\includegraphics[width=0.5\textwidth]{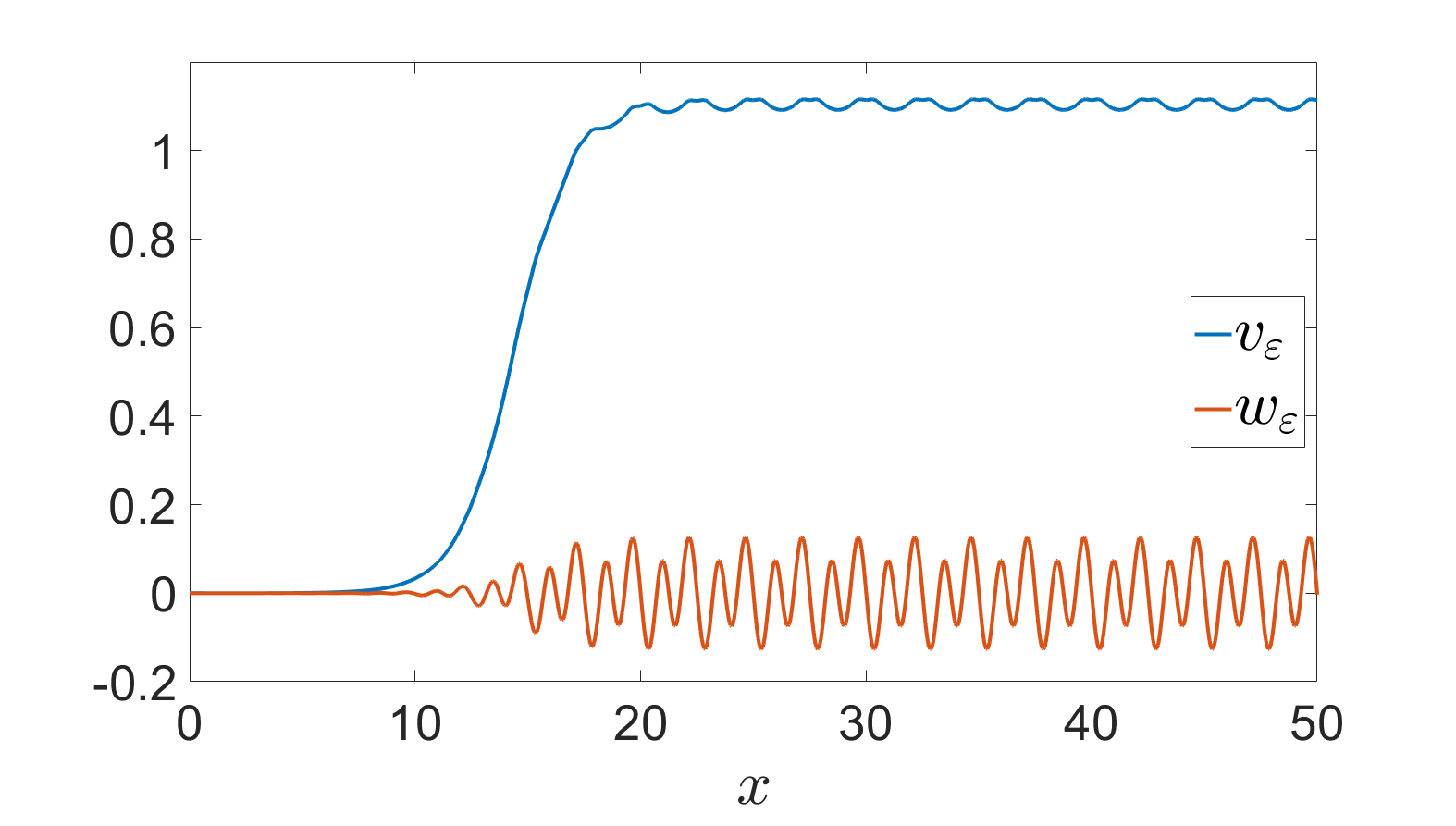}
\includegraphics[width=0.5\textwidth]{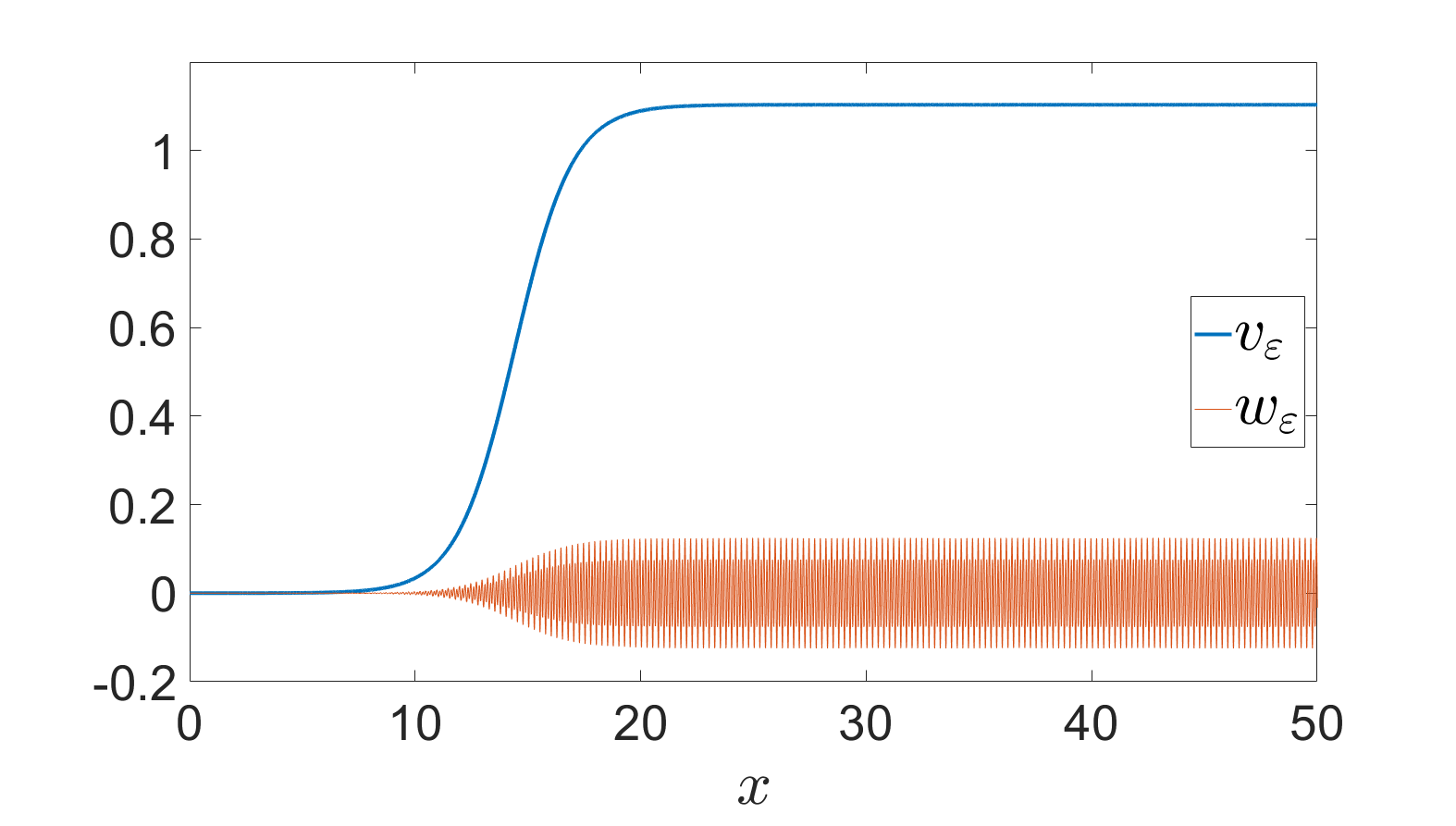}
\caption{Solutions $(v_\eps, w_\eps)$ of the original system \eqref{eq:Syst}:
  left $\eps {=} 2.5$, right $\eps {=} 0.25$.} 
\label{fig:eps}
\end{figure}

%%%%%%%%%%%%%%%%%%%%%%%%%%%%%

\AAA 
\subsection{Possible generalizations}
\label{su:Generalizations}

For mathematical conciseness we have decided to restrict ourselves to memory terms with
a simple linear structure. However, as our theory is many based on the quite
general work in \cite{Chen97EUAS} it is clear that the results can be
generalized in several ways. 

First, the integral memory can be replaced by discrete time delays in the form 
such that the equation reads 
\begin{equation}
  \label{eq:Delay}
  \dot u(t,x) = D u_{xx}(t,x) + F(u(t,x)) + \sum_{j=1}^{j_*} \gamma_j u(t{-}\tau_j,x).
\end{equation}
Assuming $\gamma_j>0$ and setting $\gamma=\sum_{j=1}^{j_*} \gamma_j$, we obtain
the memory equation \eqref{eq:NaguMemo} with a $\Gamma(\tau)=\sum_{j=1}^{j_*}
\frac{\gamma_j}{\gamma} \,  \delta_{\tau_j}(\tau)$. We refer to \cite{WuZou01TWFR}
for a general approach to traveling waves in reaction-diffusion equations
involving linear and nonlinear delay terms.\medskip

Secondly, further generalizations can be obtained by looking at nonlinear
couplings to ODEs, thus generalizing the coupled system
\eqref{eq:PDE-ODE-System}:
\begin{equation}
  \label{eq:Nonl.PDE-ODE}
  \dot u= Du_{xx} +F(u)+ \sum_{i=1}^{m_\rmw} A_i(u,w_i), \qquad
\dot w_i = - \lambda_i w_i + B_i(u),
\end{equation}
where the coupling functions $A_i:\R^2\to \R$ and $B_i:\R\to \R$ are assumed to be smooth
functions with derivatives satisfying $\pl_u A(u,w_i)\leq 0$, $\pl_{w_i}
A_i(u,w_i)\geq 0$, and $B'_i(u)\geq 0$. Again we can express $w_i(t,x)$ as memory
functional depending on  $u(s,x)$ for $s \in {]{-}\infty,t[}$. Hence,
\eqref{eq:Nonl.PDE-ODE}  can be written in the form 
\begin{align*}
 &\dot u(t,x)= Du_{xx}(t,x) +F(u(t,x))+ \sum_{i=1}^{m_\rmw} A_i\big(u(t,x),
 \bbGamma_i{*}B_i(u(\cdot,x))(t) \big), 
\\
&\text{where } \bbGamma_i{*}h(t)= 
 \int_0^\infty \ee^{-\lambda_i \tau} h(t{-}\tau) \dd \tau. 
\end{align*}
Hence we have a memory in time, which is local in the space variable
$x\in \R$.  As in Section \ref{su:aux-prob} we can introduce an auxiliary wave
speed $\ttv$ and turn the memory terms into spatially nonlocal terms. Because
of our assumptions on $A_i$ and $B_i$ we are exactly in the setting of
\cite[Eqn.\,(1.14)]{Chen97EUAS}. Thus, it is expected that the methods
developed here can be extended to such nonlinear couplings.

Thirdly, it seems possible to generalize the theory to handle multidimensional
traveling waves in a cylindrical domain $\Omega=\R\ti\Sigma$ as in
\cite{Gard86EMTW}, but now with memory terms. 
\begin{align*}
\dot u(t,x,y)&= D\big(u_{xx}(t,x,y)+\Delta_y u(t,x,y)\big)\\
& \quad +F(y,u(t,x,y))+ 
\gamma \int_0^\infty \Gamma (\tau) u(t{-}\tau,x,y) \dd \tau 
             &&\text{for } t>0,\ (x,y)\in \R\ti \Sigma, \\
0&= u(t,x,y) &&\text{for }t>0,\ (x,y)\in \R\ti \pl\Sigma. 
\end{align*}
However, then the ``bistability'' Hypothesis (H1) has to be formulated in  terms of the
elliptic problem $u \mapsto D\Delta_y u +F(u)+ \gamma u$ in $\Sigma$, which
should have exactly three solutions $u^\gamma_\alpha \in \rmH^1_0(\Sigma)$ with
appropriate properties. 
\EEE

\section{Applications to the cubic case} 
\label{su:CubicCase}

In this section we study the classical FitzHugh-Nagumo system with a bistable
cubic nonlinearity, namely  
\begin{equation}
  \label{eq:SimpleFHN.A}
  \dot u = u_{xx} +F(u) -\beta v, \qquad \dot v = -v+u ,\quad 
  F(u)=-u(u{-}a)(u{-}1),
\end{equation}
where $a\in {]0,1[}$. Concerning traveling fronts, this system is equivalent to
the memory equation 
\begin{equation}
  \label{eq:CubicMemo}
   \dot u(t,x) = u_{xx}(t,x)  +F(u(t,x) ) -\beta \int_0^\infty \ee^{-\tau}
   u(t{-}\tau,x) \dd \tau . 
\end{equation}
In particular, we see that the previous parameter $\gamma$ is given by
$\gamma=-\beta$,  such that our theory developed above only applies for
$\beta\leq 0$. However,  in the numerical simulations documented below, we are
free to choose $\beta>0$ as well. 

We first observe that the nonlinearity 
\[
F_{-\beta}(u)=-u(u{-}a)(u{-}1) - \beta u
\]
has a bistable structure if and only if $\beta < (1{-}a)^2/4$ and then
\[
u^{-\beta}_-=0,\quad 
u^{-\beta}_\rmm = \frac{1{+}a}2 - \frac12\sqrt{(1{-}a)^2-4\beta}, \quad 
u^{-\beta}_+ = \frac{1{+}a}2 + \frac12\sqrt{(1{-}a)^2-4\beta}.
\]
In \cite{Mcke70NE}, the wave speed of the local model $\dot
u=u_{xx}+F_{-\beta}(u)$ is calculated explicitly,
which gives 
\[
\ttC^\FHN(-\beta,0):= 
 \frac1{\sqrt2}\big( 2u^{-\beta}_\rmm- u^{-\beta}_+\big) =
\frac1{2\sqrt2} \Big( 1{+}a  -
3\sqrt{(1{-}a)^2-4\beta}\: \Big).
\]
In the case $\beta=0$ there is no coupling between the ODE an the PDE, hence
$\ttc_0=\ttC^\FHN(0,0)= (2a{-}1)/{\sqrt2}$ is known.  

In the case $a\in {]\frac12,1[}$, which we fix from now on, the function
$\beta\mapsto \ttC({-}\beta,0)$ changes sign at
\[
\beta_0(a) =  \tfrac19 \big( 2a^2{-}5a {+} 2 \big) \in {]{-}\tfrac19 ,0[}.
\]
With $\ttC^\FHN(0,0)=(2a{-}1)/{\sqrt2}>0$, we obtain $\ttC^\FHN(-\beta,0)>0$ for 
$\beta \in {]\beta_0(a),\frac14(1{-}a)^2]}$ and $\ttC^\FHN(-\beta,0)<0$ for $\beta <
\beta_0(a)$. 

From this, we see that we can apply our existence theory in Theorem
\ref{th:MainResult}  and obtain a unique traveling front for the FitzHugh-Nagumo 
\eqref{eq:SimpleFHN.A}, or equivalently for the memory equation
\eqref{eq:CubicMemo} for all $\beta\leq 0$. This front
connects the values $u^{-\beta}_-=0$  and $u^{-\beta}_+$, and using Corollary
\ref{co:SpeedBounds} it travels with the speed $\ttc_{-\beta}$ satisfying the
bound 
\begin{align*}
0\leq \ttc_{-\beta} \leq \ttC^\FHN({-}\beta,0) & \quad 
 \text{for } \beta \in[ \beta_0(a),0],
\\ 
\ttC^\FHN({-}\beta,0) \leq  \ttc_{-\beta} \leq  0& \quad 
 \text{for } \beta \leq  \beta_0(a).
\end{align*}

For numerical simulations we choose $a=0.6$ determine  $\ttc_{-\beta}$ on the
whole bistable regime $\beta \leq (1{-}a)^2/4=0.04$. For this case, we have
$\ttc_0=\ttC^\FHN(0,0)=0.2/\sqrt2\approx 0.1414$ and 
\[
\ttC^\FHN(-\beta,0)=
\frac1{2\sqrt2}\big(1.6-3\sqrt{0.4^2-4\beta}\:\big) 
\approx 0.566-2.12\sqrt{0.04{-}\beta}
\]
\AAA as upper or lower bound for $\ttc_{-\beta}$. Moreover, we know that
$\ttC^\FHN({-}\beta,0)$ changes sign at  $\beta_0(0.6)= -0.28/9\approx
-0.0311$.

To determine the speed of the traveling front numerically, we solve the coupled
system \eqref{eq:SimpleFHN.A} starting from front-like initial data with the
correct plateau $(u,v)=(u^{-\beta}_+,u^{-\beta}_+)$ for large $x$ and
$(u,v)=(0,0)$ for small $x$. The solution rapidly stabilizes into the front and
the speed $\ttc_{-\beta}$ can be measured. \EEE The numerical simulations
nicely confirm the bounds on the wave speed $\ttc_{-\beta}$ in Corollary
\ref{co:SpeedBounds}. For all $\beta < \beta_0(0.6) \approx -0.0311$, Figure
\ref{fig:speed} shows $\ttC(-\beta, 0) \leq \ttc_{-\beta} \leq 0$. \AAA
\begin{figure}[h]
\begin{tikzpicture}[scale=1.1]
%\draw[fill=gray!10] (0,0) rectangle (8,4.4);
\node at(4,2.2){\includegraphics[width=0.66\textwidth]{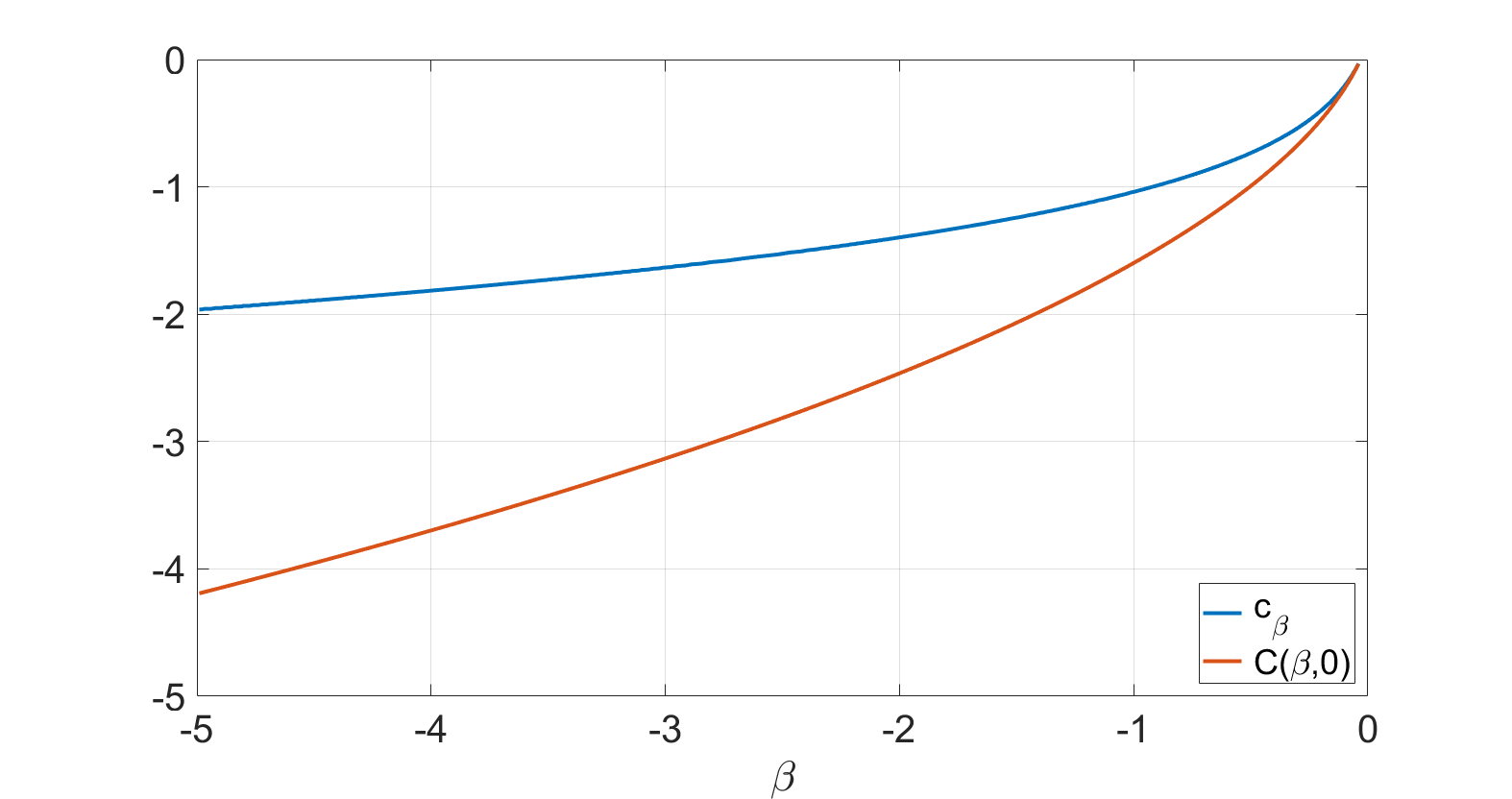}};
\draw[fill=white,white] (3.8,-0.4) rectangle (4.4,0.1);
\node at (4.2,-0.3) {$\beta$};
\draw[fill=white, white] (6.3,0.38) rectangle (7.8,1.1);
\node[blue!80] at (3,3.3) {$\ttc_{-\beta}$};
\node[red!80!blue] at (6,1.9) {$\ttC^\FHN(-\beta,0)$};
\end{tikzpicture}
\begin{minipage}[b]{0.3\textwidth}
\caption{The region $\beta\in [-5,-0.03]$ was sampled with $\beta$-steps of size
  $0.01$ to find $\ttc_{-\beta} \in [\ttC^\FHN(-\beta,0),0]$.\newline
\mbox{}}
\label{fig:speed}
\end{minipage}
\end{figure}

For $\beta \in [\beta_0(0.6),0]$ Figure \ref{fig:small.beta} shows
$0 \leq \ttc_{-\beta} \leq \ttC(-\beta,0)$ as predicted by the theory.  For
positive $\beta \in {]0,0.04[}$ Theorem \ref{th:MainResult} does not apply,
however, the comparison argument in \eqref{eq:relation.beta} predicts the
reverse relation $\ttc_{-\beta} \geq \ttC(-\beta,0) \geq 0$, which is confirmed
in the simulations, which also show that $\ttc_{-\beta}$ and $\ttC(-\beta,0)$
only differ less than a percent in relative size for all $\beta\in
[-0.06,0.03]$. Moreover, the identities $\ttc_{-\beta}=\ttC(-\beta,0)$ are
confirmed for $\beta=\beta_0(0.6)$ and $\beta=0$.
\begin{figure}[h]
{\centering 
\begin{tikzpicture}
\node at(4.8,1.433){
\includegraphics[width=0.79\textwidth,trim=18 50 20 30,clip=true]
                {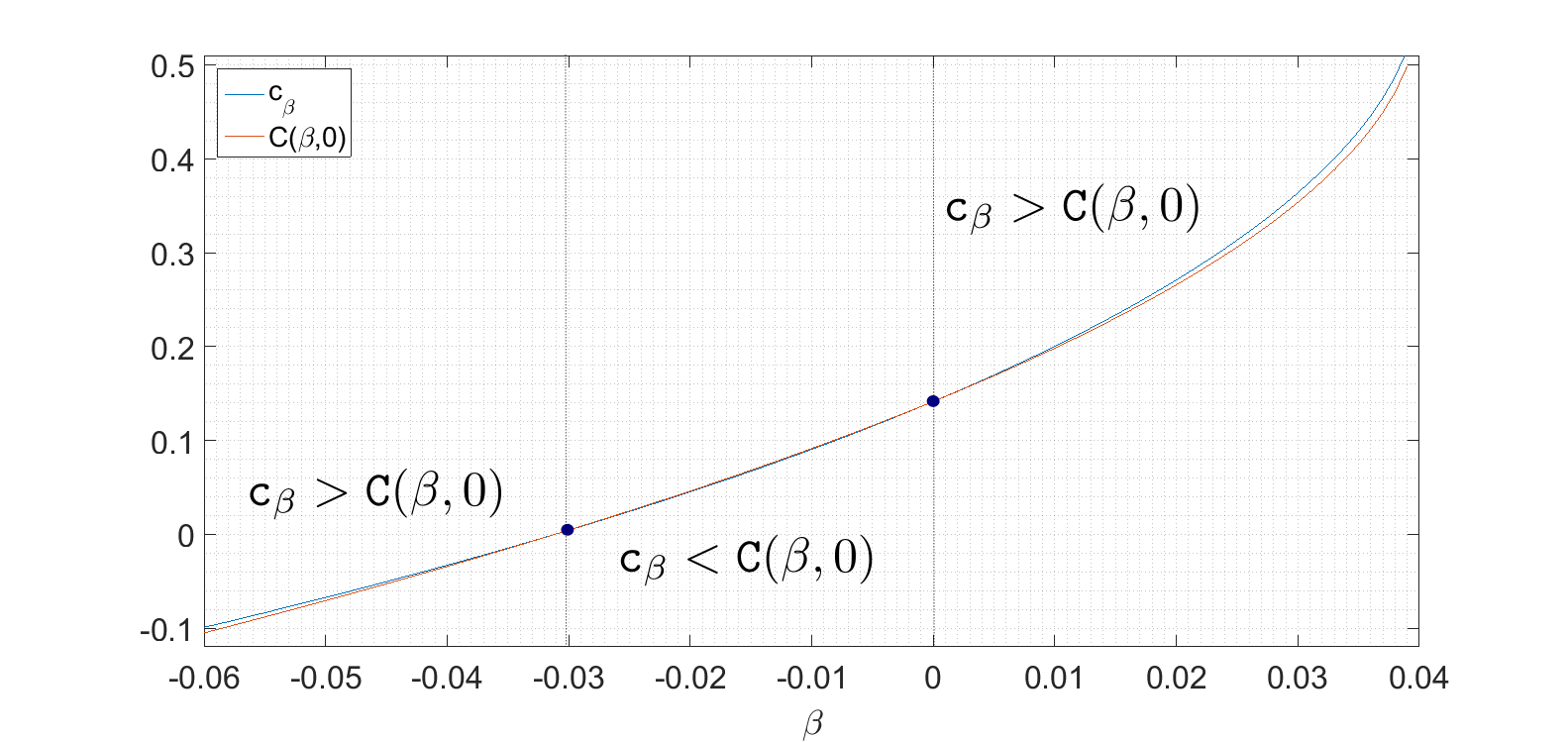}};
\draw[fill=green!10,opacity=0.0] (0,0) rectangle (10,3);
% remove legende upper left corner
\draw[fill , color=white,opacity=0.99] (0.1,3.06) rectangle (1.3,3.9);
% remove right text 
\draw[fill , color=white,opacity=0.99] (6.15,2.4) rectangle (8.3,2.9);
\node at (7.3,2.6) {{\scriptsize$\ttc_{-\beta} >\ttC^\FHN(-\beta,0)$}};
% remove middle text 
\draw[fill , color=white!100!blue,opacity=0.99] (3.45,-0.45) rectangle (5.55,-0.02);
\node at (4.5,-0.4) {{\scriptsize$\ttc_{-\beta} < \ttC^\FHN(-\beta,0)$}};
% remove middle text 
\draw[fill , color=white!100!blue,opacity=0.99] (0.4,0.1) rectangle (2.5,0.52);
\node at (1.45,0.3) {{\scriptsize$\ttc_{-\beta} >\ttC^\FHN(-\beta,0)$}};
\node[blue!80] at (9,3.3) {$\ttc_{-\beta}$};
\node[red!80!blue] at (9,1.8) {$\ttC^\FHN(-\beta,0)$};
\draw[very thin,->] (0,0) -- (11,0) node[above] {$\beta$};

\end{tikzpicture}\medskip\par
}

\begin{minipage}{0.52\textwidth}
\includegraphics[width=1.0\textwidth]{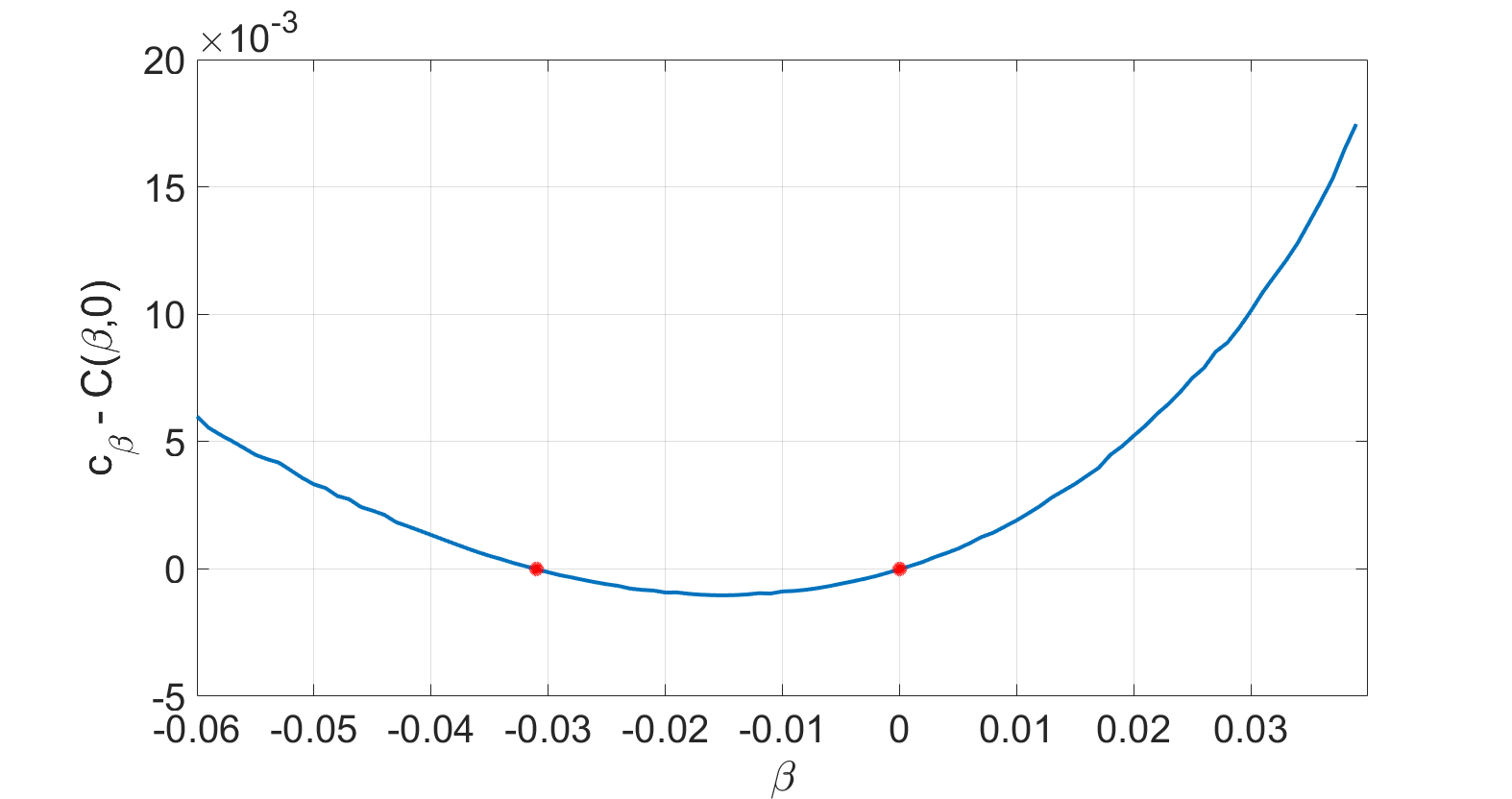}
\end{minipage}\hfill
\begin{minipage}{0.44\textwidth}
\caption{The region $\beta\in [-0.06,0.04]$ was sampled with $\beta$-steps of size
  $0.001$ to compare $\ttc_{-\beta}$ and $\ttC(-\beta,0)$ in the upper
  figure. As the two graphs differ only little, the lower figure displays
  $c_{-\beta} - \ttC(-\beta,0)$, where the sign changes at
  $\beta=\beta_0(0.6)\approx -0.0311$ and $\beta=0$ are clearly visible.\newline
\mbox{}}
\label{fig:small.beta}
\end{minipage}
\end{figure}

\appendix

\section{Two-scale convergence in weighted Sobolev spaces}
\label{sec:appendix-TS}

The aim of this appendix is to explain how the result in Theorem
\ref{thm:ts-conv-2} for $\Omega =\R^d$ can be derived by generalizing the
theory for bounded Lipschitz domains $\Omega \subset \R^d$ as developed in
\cite{MiReTh14TSHN, Reic15TSHS}. To this end we introduce a few notations from
two-scale convergence and two-scale homogenization as developed in
\cite{Ngue89GCRF, Alla92HTSC, MieTim07TSHE, CiDaGr08PUMH}.  For arbitrary
(bounded or unbounded) Lipschitz domains $\Omega \subseteq \bbR^d$, \EEE we set
\begin{align*}
	\calT_\eps : \rmL^2(\Omega) \to \rmL^2(\bbR^d\ti\bbT^d); \quad
	(\calT_\eps u)(x,y) := u_\mathrm{ex} \left( \eps \left[ \tfrac{x}{\eps}
          \right] + \eps y \right) , 
\end{align*}
where $u_\mathrm{ex} \in\rmL^2(\bbR^d)$ is obtained by extension with $0$ on
$\bbR^d \backslash \Omega$ and $[z] \in \bbZ^d$ denotes the integer part for
any $z \in \bbR^d$. We define \emph{weak and strong two-scale convergence} via
\begin{align*}
  u_\eps \wtc U \text{ in } \rmL^2(\Omega\ti\bbT^d) 
  & \quad : \overset{\text{Def.}}{\Longleftrightarrow} \quad \calT_\eps u_\eps
  \rightharpoonup U_\mathrm{ex} \text{ in } \rmL^2(\bbR^d\ti \bbT^d) , \\ 
  u_\eps \stc U \text{ in } \rmL^2(\Omega\ti\bbT^d) 
  & \quad : \overset{\text{Def.}}{\Longleftrightarrow} \quad \calT_\eps u_\eps
  \to U_\mathrm{ex} \text{ in } \rmL^2(\bbR^d\ti \bbT^d) . 
\end{align*}

We briefly recall the mathematical setting \AAA for the $\eps$-problem
\eqref{eq:SystOrig} such that \EEE the two-scale system
\eqref{eq:Syst} is rigorously justified. The fourth order diffusion tensors
$\bbD_i \in \mathrm{Lin}(\bbR^{d \ti m_i}; \bbR^{d \ti m_i})$ are uniformly
elliptic and bounded, i.e.\ there exist constants $M, \mu > 0$ such that
\begin{align*}
  \bbD_\rmi(y) \xi : \xi \geq \mu |\xi|^2
  \quad\text{and}\quad
  |\bbD_\rmi(y) \xi| \leq M |\xi| \quad\text{for all } \xi \in 
  \bbR^{d\ti m_\rmi} \text{ and } \rmi \in  \{\rmv,\rmw\},  
\end{align*}
and they are not necessarily symmetric. For the reaction terms we assume boundedness
$f_\rmi(\cdot,v,w) \in \rmL^\infty(\bbT^d)$ and global Lipschitz continuity, i.e.\
there exists $L>0$ such that for $v_1,v_2 \in \bbR^{m_\rmv}$ and
$w_1,w_2 \in \bbR^{m_\rmw}$
\begin{align*}
	|f_\rmi(y,v_1,w_1) - f_\rmi(y,v_2,w_2)| \leq L(|v_1 {-} v_2| + |w_1 {-} w_2|)
	\quad\text{for }  \rmi \in  \{\rmv,\rmw\} .
\end{align*}

\AAA In addition to the weighted Lebesgue spaces $\rmL^2_\varrho(\bbR^d)$ and
$ \rmL^2_\varrho(\bbR^d\ti \bbT^d)$ from \eqref{eq:WeightedL2}, \EEE we also
define the weighted Sobolev spaces $\rmH^1_\varrho(\bbR^d)$ and
$\rmL^2_\varrho(\bbR^d; \rmH^1(\bbT^d))$ via
\begin{align*}
  \rmH^1_\varrho(\bbR^d) & := \Bigset{ u \in \rmH^1_\mathrm{loc}(\bbR^d) }
      { \Vert u \Vert_{\rmL^2_\varrho(\bbR^d)}  +  \Vert \nabla u
      \Vert_{\rmL^2_\varrho(\bbR^d)} < \infty  }, 
  \\
  \rmL^2_\varrho(\bbR^d; \rmH^1(\bbT^d)) & := \Bigset{ U \in
    \rmL^2_\mathrm{loc}(\bbR^d; \rmH^1(\bbT^d)) }
      { \Vert U \Vert_{\rmL^2_\varrho(\bbR^d\ti\bbT^d)}  +  \Vert
      \nabla_y U \Vert_{\rmL^2_\varrho(\bbR^d\ti\bbT^d)} < \infty  }.
\end{align*} 
\AAA Our choice $\varrho(x)=1/\cosh(|x|/R)$ gives \EEE
$\varrho, \varrho^{-1} \in \rmL^1_\mathrm{loc}(\bbR^d)$, which implies that the
weighted Lebesgue and Sobolev spaces are Banach spaces, see e.g.\ 
\cite[Thm.\,1]{GolUkh09WSSE}. Moreover, introducing the scalar product
$(u,v)_{\rmL^2_\varrho(\bbR^d)} := (\sqrt{\varrho} u , \sqrt{\varrho} v
)_{\rmL^2(\bbR^d)}$ yields that they are also separable Hilbert spaces.
The proof of Theorem \ref{thm:ts-conv-2} follows along the lines of
\cite[Thm.\,4.1]{MiReTh14TSHN} and \cite[Thm.\,2.1.1,\,Thm.\,2.2.1]{Reic15TSHS}
with the following modifications.
\begin{itemize}
\item The definition of the periodic unfolding operator 
  \begin{align*}
    \calT_\eps : \rmL^2_\varrho(\bbR^d) \to \rmL^2_\varrho(\bbR^d \ti \bbT^d); \quad
    (\calT_\eps u)(x,y) := u \left( \eps \left[ \tfrac{x}{\eps} \right] + \eps y \right) 
  \end{align*}
  is immediate and the notion of two-scale convergence follows
  analogously. Notice that the assumptions on the weight function imply the
  unfolding estimate
  \begin{align*}
    \Vert \calT_\eps \varrho - \varrho \Vert_{\rmL^\infty(\bbR^d\ti\bbT^d)} 
    \leq \eps \sqrt{d} \Vert \nabla \varrho \Vert_{\rmL^\infty(\bbR^d)} 
    \leq \eps \sqrt{d} /R.
  \end{align*}
  Hence, we obtain the upper bound for unfolded functions
  $u \in \rmL^2_\varrho(\bbR^d)$
  \begin{align*}
    \Vert \calT_\eps u \Vert_{\rmL^2_\varrho(\bbR^d \ti \bbT^d)} \leq U_\eps
    \Vert u \Vert_{\rmL^2_\varrho(\bbR^d)} 
    \quad\text{with}\quad 
    U_\eps = \big( 1 - \eps \sqrt{d}/R \big)^{-1/2} .
  \end{align*}
	
\item The standard compactness results for two-scale convergence
  \cite{Ngue89GCRF,Alla92HTSC,MieTim07TSHE} also hold for weighted spaces,
  since they are, in particular, separable Banach spaces and the
  Banach--Alaoglu Theorem is applicable.

\item The definition of the gradient folding operator
  $\mathcal{G}^\mathrm{slow}_\eps$ (cf.\ \cite[Def.\,3.5]{MiReTh14TSHN}) needs
  to be adapted as follows: In the case of slow diffusion of order $O(\eps^2)$,
  we define for $W \in \rmL^2_\varrho(\bbR^d; \rmH^1(\bbT^d))$ the one-scale
  function $\mathcal{G}^\mathrm{slow}_\eps W := \hat{w}_\eps \in
  \rmH^1_\varrho(\bbR^d)$, which is given by \AAA the Lax--Milgram lemma \EEE 
  as the unique solution of the elliptic problem
\begin{align*}
\int_{\bbR^d} \big[ \sqrt{\varrho} \hat{w}_\eps - \mathcal{F}_\eps \left(
  \sqrt{\varrho} W\right) \big] \cdot \sqrt{\varrho} \varphi  
+ \big[ \eps \sqrt{\varrho} \nabla \hat{w}_\eps - \mathcal{F}_\eps \left(
  \sqrt{\varrho} \nabla_y W\right) \big] : \eps \sqrt{\varrho} \nabla \varphi
\dd x = 0  
\end{align*}
for all $\varphi \in \rmH^1_\varrho(\bbR^d)$. 

\item For the slowly diffusing species $w_\eps$, we choose the test function
  $\varphi^\mathrm{slow}_\eps = \varrho \left( w_\eps {-}
    \mathcal{G}^\mathrm{slow}_\eps W \right)$ and continue as in
  \cite{MiReTh14TSHN,Reic15TSHS}.  For species $v_\eps$ undergoing diffusion of
  order $O(1)$, we proceed analogously (cf.\ \cite[Def.\,1.2.7]{Reic15TSHS}) and
  choose the test function
  $\varphi^\mathrm{stand}_\eps = \varrho \left( v_\eps -
    \mathcal{G}^\mathrm{stand}_\eps V \right)$.
\end{itemize}
\AAA With these modifications for $\Omega =\R^d$, the proofs in
\cite[Thm.\,4.1]{MiReTh14TSHN} and \cite[Thm.\,2.1.1,\,Thm.\,2.2.1]{Reic15TSHS}
can be easily adapted, and the proof of Theorem \ref{thm:ts-conv-2} is
complete. \EEE

\paragraph*{Acknowledgments.} The authors are grateful to Christian K\"uhn for
illuminating discussions and to Stefanie Schindler for many helpful
comments. This research was supported by Deutsche Forschungsgemeinschaft
through the Collaborative Research Center SFB 910 \emph{Control of
  self-organizing nonlinear systems} (Project no.\ 163436311) via the
subproject A5 ``Pattern formation in coupled parabolic systems''.

%\nocite{AchKue15ANTW,AchKue15TWBE,AchKue15BPSS}

\footnotesize
%\bibliographystyle{my_alpha}
%\bibliography{bib_alex,alex_pub} 

\def\cprime{$'$}

\end{document}